\pgfplotsset{compat=newest} 
\numberwithin{equation}{section}
\newtheorem{theorem}{Theorem}[section]
\newcommand{\be}{\begin{equation}}
	\newcommand{\ee}{\end{equation}}	
\theoremstyle{remark} 
\journal{***}
\begin{document}
	\begin{frontmatter}
		\title{Stability and Hopf bifurcation analysis of an HIV infection model with latent reservoir, immune impairment and delayed CTL immune response}	
		\author{Songbo Hou \corref{cor1}}
		\ead{housb@cau.edu.cn}
		\address{Department of Applied Mathematics, College of Science, China Agricultural University,  Beijing, 100083, P.R. China}
		\author{Xinxin Tian}
		\ead{txx@cau.edu.cn}
		\address{Department of Applied Mathematics, College of Science, China Agricultural University,  Beijing, 100083, P.R. China}
		
		\cortext[cor1]{Corresponding author: Songbo Hou}
		\begin{abstract}
			In this paper, we develop a dynamic model of HIV infection that incorporates latent hosts, cytotoxic T lymphocyte (CTL) immunity, saturated incidence rates, and two transmission mechanisms: virus-to-cell and cell-to-cell transmission. The model has three kinds of delays: intracellular delay, replication of viruses delay, immune response delay. Initially, the model's solutions are confirmed to be both nonnegative and bounded for nonnegative initial values. Subsequently, two biologically critical parameters were identified: the virus reproduction number $\mathcal{R}_0$ and the immune reproduction number $\mathcal{R}_1$. Thereafter, by invoking LaSalle's principle of invariance alongside Lyapunov functionals, we establish stability criteria for each equilibrium. The results indicate that the stability of the endemic equilibrium may be altered by a positive immune delay, whereas intracellular and viral replication delays do not affect the equilibria. By considering the delay in the immune response as a bifurcation-inducing threshold, we derive the exact conditions necessary for these stability transitions. Further analysis shows that increasing the immune delay destabilizes the endemic equilibrium, inducing a Hopf bifurcation. Additionally, using the center manifold theorem and normal  form theory, we explored the direction and stability of Hopf bifurcations in detail. To corroborate these theoretical results, numerical simulations are systematically conducted.
			
		\end{abstract}	
		\begin{keyword} latent reservoir \sep immune impairment \sep CTL immune response \sep Hopf bifurcation \sep saturation incidence
			\MSC [2020] 34D05,  37C75, 92D30
		\end{keyword}
	\end{frontmatter}
	
	\section{Introduction}
As a pathogenic retrovirus, HIV compromises the human immune system by depleting CD4+ T lymphocytes, a cell population vital to sustaining immunological equilibrium. HIV invades the immune system by integrating its DNA into host cells, gradually impairing their function, making it difficult for the body to defend against pathogens, and ultimately potentially leading to AIDS, a severe global immunodeficiency disease that poses a significant threat to public health. Therefore, studying the pathogenesis, transmission, and prevention and control strategies of AIDS is extremely important.
	
During the process of HIV infection, the virus exhibits uneven distribution within the body and often selectively hides in specific cells or tissues, forming "latent reservoirs" to evade immune system surveillance \cite{WOS:000268071100035}. Although antiretroviral therapies such as HAART have significantly reduced HIV-related complications and mortality \cite{WOS:000086586600009,WOS:000072669800001}, the virus can still persist in a latent state, and viral loads can easily rebound after treatment interruption \cite{WOS:000079421400007,WOS:000167121800002}. After the widespread application of ART, low viral levels and latently infected cells are still seen in patients, potentially due to the ongoing release of virus from the activation of latent cells \cite{WOS:000084149700059}. Therefore, to gain a deeper understanding of HIV infection dynamics, recent research has begun to incorporate the mechanisms of latent infection cell activation into mathematical model analysis \cite{WOS:000395717900003,WOS:000074861500009,WOS:000177334900047,WOS:000262804700008,WOS:000272033100013,WOS:000539042000018,Hmarrass2025}. Furthermore, while previous studies have considered HIV transmission and some aspects of the immune system's response, they have neglected the immune response of CTLs. CTLs are activated during viral infection, recognise and eliminate infected cells, reduce viral load, and provide a protective mechanism \cite{WOS:A1996UD59700039,WOS:000266271700006,WOS:000278768800055,BELL}. The production of CTLs is linked to the quantity of previously active, infected cells, and involves a time delay. In recent years, HIV infection models that incorporate CTL immune responses and time delay factors have become a research focus. Within the framework of delay differential equations, these investigations examine how time delays influence model dynamics \cite{WOS:000352560500008,WOS:001347333700002,WOS:000513551300002,WOS:000441251900006,WOS:000755088100017,WOS:000244572500009,WOS:000641256000022}.
	
	At the same time, previous studies have simplified the potential damage that antigens can cause to the immune system. In reality, various pathogens can suppress immune responses. Studies by Iwami et al. \cite{WOS:000266713900008} have found that HIV infection can lead to damage to CTL cells. To address HIV-induced immune dysregulation, researchers have undertaken extensive studies \cite{WOS:000781901400009,WOS:000181073000075,WOS:000397808600004,WOS:A1990ER29800032,WOS:000292176200016}, shedding light on the complex interplay between viral mechanisms and host immunity. Wang et al. \cite{WOS:000292176200016} introduced a mathematical framework for viral pathogenesis, focusing on how immune dysregulation influences disease progression. Xu et al. \cite{WOS:000656821600024} considered a mathematical model comprising uninfected cells $x$, latently infected cells $p$, infected cells $y$, and CTL immune response cells $z$. The model is structured as follows:
	\begin{equation}\label{1.1}
		\begin{aligned}\left\{\begin{aligned}
				\frac{dx(t)}{dt}=&\lambda-\beta_{2}x(t)y(t)-\mu_1x(t),\\
				\frac{dp(t)}{dt}=&\rho \beta_{2}x(t)y(t)-\alpha p(t)-\mu_{2}p(t),\\
				\frac{dy(t)}{dt}=&(1-\rho)\beta_{2}x(t)y(t)+\alpha p(t)-\mu_{3}y(t)-ay(t)z(t),\\
				\frac{dz(t)}{dt}=&cy(t-\tau)-\mu_{5}z(t)-\eta y(t)z(t).
			\end{aligned}\right.\end{aligned}
	\end{equation}
Within the framework of the model (\ref{1.1}), the rate of generation of uninfected cells is characterized by $\lambda$, while $\mu_1$ signifies the rate at which these cells are eliminated. $\beta_2$ is the coefficient for the infection rate, indicating the rate that infected cells infect uninfected cells. Defined by $\rho \in (0,1)$, the terms $\rho$ and $1-\rho$ modulate the distribution of infections between inactive and active forms. Furthermore, $\alpha$ governs the transition from latent to actively infected cells, while $\mu_2$ determines the mortality of cells during the latent phase. Infected cells decay at a rate proportional to $\mu_3 y(t)$ and are eliminated through CTL-mediated immune activity proportionally to $a y(t)z(t)$. Additionally, $c$ quantifies the proliferation rate of CTL cells, $\eta$ captures the rate of immune impairment, CTL cell degradation is described by $\mu_5$, and $\tau$ denotes a temporal delay associated with the CTL immune response.

In the study of HIV infection dynamics, the bilinear activation hypothesis of CTL immune response adopted in system (\ref{1.1}) exhibits certain biological limitations. Under actual immunological conditions, when the concentrations of infected cells or CTL cells exceed a threshold, the immune response demonstrates saturation characteristics where the reaction rate no longer increases linearly with cell numbers. This phenomenon has been well documented in multiple studies \cite{WOS:000336253100008,WOS:000693357100001,Mo2022}. Although various functional forms (such as Hill functions or Michaelis-Menten kinetics) can be employed to characterize CTL responses, the simple saturation form $\frac{cyz}{(h+z)}$ has been widely adopted due to its parameter economy and biological plausibility\cite{Do,WOS:A1996UD59700039}. Notably, systematic investigations by Wodarz and Nowak \cite{Do} demonstrated that this formulation can accurately predict viral rebound dynamics following treatment interruption. Furthermore, existing models that consider only a single CTL immune time delay may produce deviations in predicting immune responses and viral load variations due to the neglect of other temporal factors. Notably, research on HIV transmission mechanisms \cite{WOS:000487331700033,WOS:000081497400034,WOS:000405724700011,WOS:000412253200041} has clearly revealed the critical importance of dual infection pathways: both direct cell-to-cell transmission and cell-free viral particle transmission collectively drive the infection process, yet system (\ref{1.1}) fails to comprehensively incorporate the synergistic effects of these two transmission pathways.

To address the aforementioned issues, this study will replace the bilinear incidence rate in system (\ref{1.1}) with a saturated incidence rate function. Simultaneously, we will integrate dual transmission pathways and introduce triple time delays encompassing intracellular delay, viral replication delay, and CTL immune response delay. This extended framework provides enhanced capability to elucidate the cooperative effects between multiple time delays and nonlinear immune responses on HIV infection dynamics, with the specific modeling procedures detailed as follows:
\begin{equation}\label{1.2}
		\begin{aligned}\left\{\begin{aligned}
				\frac{dx(t)}{dt}=&\lambda-x(t)[\beta_{1}v(t)+\beta_{2}y(t)]-\mu_1x(t),\\
				\frac{dp(t)}{dt}=&\rho e^{-m_1\tau_1}x(t-\tau_1)[\beta_{1}v(t-\tau_1)+\beta_{2}y(t-\tau_1)]-\alpha p(t)-\mu_{2}p(t),\\
				\frac{dy(t)}{dt}=&(1-\rho)e^{-m_2\tau_{2}}x(t-\tau_2)[\beta_{1}v(t-\tau_{2})+\beta_{2}y(t-\tau_{2})]+\alpha p(t)-\mu_{3}y(t)-ay(t)z(t),\\
				\frac{dv(t)}{dt}=&ky(t)-\mu_{4}v(t),\\
				\frac{dz(t)}{dt}=&\frac{cy(t-\tau_3)z(t-\tau_3)}{h+z(t-\tau_3)}-\mu_{5}z(t)-\eta y(t)z(t),
			\end{aligned}\right.\end{aligned}
	\end{equation}
where $v(t)$ denotes the amount of free virus particles existing at time $t$, the model assumes that uninfected cells get infected by free virus particles at a rate of $\beta_1 x(t)v(t)$ and by infected cells at a rate of $\beta_2 x(t)y(t)$. Additionally, the model incorporates a delay $\tau_1$, which represents the duration between viral entrance into an uninfected cell and the subsequent establishment of latent infection within the cell. Consequently, the probability of cell survival over the interval $[t-\tau_1, t]$ is given by $e^{-m_1\tau_1}$. Furthermore, $\tau_2$ denotes the delay in virus replication, and $e^{-m_2\tau_2}$ represents the survival probability of the virus over the delay interval $[t - \tau_2, t]$. Infected cells generate free virus particles at a rate of $ky(t)$, which then decay at a rate of $\mu_4 v(t)$. The term $\frac{c y(t)z(t)}{h+z(t)}$ captures the phenomenon where CTL immune responses reach saturation, driven by the presence of infected cells, and  $\tau_3$ represents the time delay in the CTL immune response. We assume that $c > \eta h$ and that all parameters in Eq.(\ref{1.2}) are positive. To better describe this phenomenon, we have a schematic diagram of model (\ref{1.2}) in Figure 1 as follows.

\begin{figure}[htbp]
	\centering
	\includegraphics[width=0.8\textwidth]{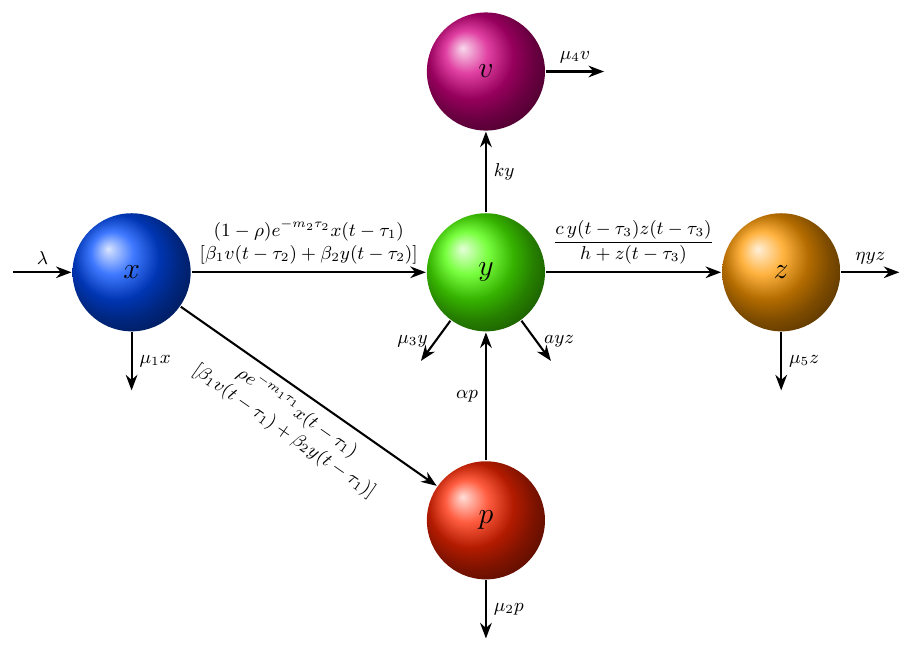}
	\caption{The pictorial representation of proposed model}
	\label{fig:image1}
\end{figure}

	\vskip 0.2cm The following initial conditions are taken into consideration:
	\\
	\begin{equation}\label{1.3}\begin{aligned}
			&x(\theta)=\pi_1(\theta),\qquad p(\theta)=\pi_2(\theta),\qquad y(\theta)=\pi_3(\theta),\\
			&v(\theta)=\pi_4(\theta),\qquad z(\theta)=\pi_5(\theta),\\
			&\pi_{j}(\theta)\geq0,\qquad\theta\in[-\tau,0],\\
			&\pi_{j}\in C\Big([-\tau,0],\mathbb{R}_{\geq0}\Big),\quad j=1,\ldots,5,
		\end{aligned}	
	\end{equation}
where $\tau$=$\max\left\{\tau_1, \tau_2,\tau_3\right\}$ and $C$ is the Banach space of continuous functions mapping the interval $[-\tau,0] $ into $\mathbb{R}_{\geq0}$ with norm $\left\|\pi_j\right\|=\sup_{-\tau\leq\theta\le 0} \left |\pi_j(\theta)\right|$. According to \cite{hale1993introduction}, system (\ref{1.2}) possesses a unique solution for $t > 0$.

The structure of this paper is outlined below: In the forthcoming section, we examine the key properties of model (\ref{1.2}), addressing its solution's boundedness and non-negativity. Additionally, we will determine the equilibrium points and calculate the fundamental reproduction number. The third section offers an in-depth exploration of the local and global asymptotic stability of the uninfected equilibrium $E_0$, the non-immune equilibrium $E_1$, and the infected equilibrium with CTL immune response $E_2$. Section 4 explores the sufficient conditions for the occurrence of an $E_2$ Hopf bifurcation. Through numerical simulations in Section 5, we dissect the system's dynamical behavior. Finally, we will present a discussion and conclusion.
	\section{Preliminaries}
	In this part, we investigate the qualitative behavior of Eq.(\ref{1.2}), including the proposed model's boundedness and positivity. Following that, we utilize the methodologies presented in references \cite{WOS:000441251900006} and \cite{WOS:000179220600004} to determine the viral reproductive number $\mathcal{R}_0$ with the CTL immune response reproductive number $\mathcal{R}_1$. Furthermore, with these values as our foundation, we analyze the equilibrium states of the model.
	\subsection{Positivity}
	\vskip 0.2cm
	\newtheorem{thm}{\bf Theorem}[section]
	\begin{thm}\label{thm1}
		All solutions of system (\ref{1.2}) with positive initial conditions always stay positive.
	\end{thm}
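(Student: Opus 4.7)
The plan is a supremum-contradiction argument paired with integrating-factor representations of the five equations in (\ref{1.2}). Existence and uniqueness of the solution on $[0,\infty)$ follows from \cite{hale1993introduction} as already cited, and by continuity combined with strict positivity of the initial data there is an interval $[0,\varepsilon)$ on which every component is positive. I would therefore introduce
\begin{equation*}
T^\star = \sup\{\,T>0 \,:\, x(t), p(t), y(t), v(t), z(t) > 0 \ \text{for all}\ t\in[0,T]\,\}
\end{equation*}
and aim to show $T^\star=\infty$.

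Next, for each component I would absorb the linear sink into an exponential integrating factor; for example,
\begin{equation*}
x(t) = x(0)\,e^{-\int_0^t (\beta_1 v+\beta_2 y+\mu_1)\,ds} + \lambda \int_0^t e^{-\int_s^t(\beta_1 v+\beta_2 y+\mu_1)\,d\sigma}\,ds,
\end{equation*}
and analogous formulas hold for $p, y, v, z$, in each of which the forcing term is precisely one of the delayed/nonlinear expressions appearing on the right-hand side of (\ref{1.2}) (for the $z$-equation the denominator $h+z(t-\tau_3)$ stays positive because $h>0$). If $T^\star<\infty$, continuity forces at least one component to vanish at $T^\star$ while all five remain strictly positive on $[0,T^\star)$; on that interval every listed forcing term is pointwise non-negative, the delayed arguments that fall in $[-\tau,0]$ being supplied by the non-negative initial data (\ref{1.3}). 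Evaluating the integrating-factor representation at $t=T^\star$ then gives (strictly positive initial-value term) + (non-negative integral) $>0$ in each slot, contradicting the vanishing. Hence $T^\star=\infty$.

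The main obstacle is the bookkeeping for the delayed arguments $t-\tau_i$, which switch between the initial function $\pi_j$ on $[-\tau,0]$ and the solution itself once $t\ge\tau_i$. The cleanest implementation is the method of steps on successive intervals of length $\tau_{\min}=\min\{\tau_1,\tau_2,\tau_3\}$: on $[0,\tau_{\min}]$ every delayed argument comes from the initial data, so the forcing terms are manifestly non-negative, and once positivity has been established there the delayed values over $[\tau_{\min},2\tau_{\min}]$ are supplied by the positive solution already constructed, so the same integrating-factor argument iterates indefinitely to cover $[0,\infty)$.
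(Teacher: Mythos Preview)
Your proposal is correct and follows essentially the same first-hitting-time contradiction argument as the paper: assume a minimal $T^\star$ (the paper calls it $t_1$) at which some component vanishes, use non-negativity of the delayed source terms on $[0,T^\star)$, and deduce that the ``vanishing'' component is in fact strictly positive there. The only cosmetic difference is that the paper bounds each equation below by a linear decay $w'\ge -b\,w$ and applies Gronwall, whereas you write out the full variation-of-constants formula; the underlying idea is identical.
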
 
	\begin{proof} 
		For all $t \geq 0$, define $m(t) =\min\left\{x(t), p(t), y(t), v(t), z(t)\right\}$. Given that the starting values are positive, it follows that $m(0)>0$. To establish that $ m(t)$ is positive for all $t \geq 0$, suppose, contrary to our claim, the system does not maintain positivity. Consequently, there must exist a point in time, denoted as $t_1 > 0$, where $m(t)$ is positive for all $0\le t<t_1$, and $m(t)=0$ at $t = t_1$. By analyzing the behavior of
		$m(t_1)$, we can derive the following five cases:
		\vskip 0.2cm (1) If $m(t_1) = x(t_1) = 0$, we deduce from the initial equation of system (\ref{1.2})
		$$
		\begin{aligned}
			\frac{d x(t)}{dt}&=\lambda-x(t)[\beta_1 v(t)+\beta_2 y(t)]-\mu_1 x(t)\\
			&\geq -\mu_1 x(t)-\max\left\{\beta_1 v(t)\right\}x(t)-\max\left\{\beta_2 y(t)\right\}x(t)\\
			&=-b_1 x(t),
		\end{aligned}$$
		for $t\in[0,t_1]$, where $b_1=\mu_1+\max_{t\in[0,t_1]} \left\{\beta_1 v(t)\right\}+\max_{t\in[0,t_1]} \left\{\beta_2 y(t)\right\}$. Consequently, $x(t_1)\geq x(0)e^{-b_1 t_1}>0$, which contradicts the fact that $x(t_1)=0$.
		\vskip 0.2cm(2) If $m(t_1) = p(t_1) = 0$, considering the second equation in system (\ref{1.2}), we derive that
		$$
		\begin{aligned}
			\frac{d p(t)}{dt}&=\rho e^{-m_1\tau_1}x(t-\tau_1)[\beta_1 v(t-\tau_1)+\beta_2 y(t-\tau_1)]-\alpha p(t)-\mu_2 p(t)\\
			&\geq -\alpha p(t)-\mu_2 p(t)\\
			&=-b_2 p(t),
		\end{aligned}$$
		for $t\in[0,t_1]$, where $b_2=\alpha+\mu_2$. Consequently, $p(t_1)\geq p(0)e^{-b_2 t_1}>0$, which contradicts the fact that $p(t_1)=0$.
		\vskip 0.2cm(3) If $m(t_1) = y(t_1) = 0$, upon inspecting the third equation of system (\ref{1.2}), we find that
		$$
		\begin{aligned}
			\frac{dy(t)}{dt}&=(1-\rho)e^{-m_2 \tau_2}x(t-\tau_2)[\beta_1 v(t-\tau_2)+\beta_2 y(t-\tau_2)]+\alpha p(t)-\mu_3 y(t)-ay(t)z(t)\\
			&\geq -\mu_3 y(t)-\max\left\{az(t)\right\}y(t)\\
			&=-b_3 y(t),
		\end{aligned}
		$$
		for $t\in[0,t_1]$, where $b_3=\mu_3+\max_{t\in[0,t_1]}\left\{az(t)\right\} $. Consequently, $y(t_1)\geq y(0)e^{-b_3 t_1}>0$, which contradicts the fact that $y(t_1)=0$.
		\vskip 0.2cm(4) If $m(t_1)=v(t_1)=0$, the fourth governing equation in system (\ref{1.2}) leads to
		$$
		\begin{aligned}
			\frac{dv(t)}{dt}&=ky(t)-\mu_4 v(t)\\
			&\geq -\mu_4 v(t)\\
			&=-b_4 v(t),
		\end{aligned}
		$$
		for $t\in[0,t_1]$, where $b_4=\mu_4$. Consequently, $v(t_1)\geq v(0)e^{-b_4 t_1}>0$, which contradicts the fact that $v(t_1)=0$.
		\vskip 0.2cm(5)  If $m(t_1)=z(t_1)=0$, from the fifth governing equation in system (\ref{1.2}) we can get
		$$
		\begin{aligned}
			\frac{dz(t)}{dt}&=\frac{cy(t-\tau_3)z(t-\tau_3)}{h+z(t-\tau_3)}-\mu_5 z(t)-\eta y(t)z(t)\\
			&\geq -\mu_5 z(t)-\max_{t\in[0,t_1]}\left\{\eta y(t)\right\} z(t)\\
			&=-b_5 z(t),
		\end{aligned}
		$$
		for $t\in[0,t_1]$, where $b_5=\mu_5+\max_{t\in[0,t_1]}\left\{\eta y(t)\right\}$. Consequently, $z(t_1)\geq z(0)e^{-b_5 t_1}>0$, which is in direct conflict with the condition that $z(t_1)=0$.
		
		In summary, when $t \geq 0$, $x(t) > 0$, $p(t) > 0$, $y(t) > 0$, $v(t) > 0$, and $z(t) > 0$. This completes the proof.
	\end{proof}

	Next, we address the case of non-negative initial values. Following the approach in \cite{cao2024}, we define the following vector field:
	$$	U = (x, p, y, v, z),$$
	and
	$$\dot{U} = \left( \frac{dx}{dt}, \frac{dp}{dt}, \frac{dy}{dt}, \frac{dv}{dt}, \frac{dz}{dt} \right).$$
	
	Consider the function
	$$
	g(t, U_t) = \left(g_1(t, U_t), g_2(t, U_t), g_3(t, U_t), g_4(t, U_t), g_5(t, U_t)\right),$$
	where $ U_t(\theta) = U(t + \theta) $ for $ \theta \in [-\tau, 0]$, and $ g_j(t, U_t)$ represents the right-hand side of the $ j $-th equation in system (\ref{1.2}). Then we can rewrite system (\ref{1.2}) as 
	$\dot{U}=	g(t, U_t) .$
	It is clear that $g(t, U_t)$ satisfies the criteria given in Theorem 2.3 of \cite{cao2024}. Therefore, we establish the following theorem.
	
	\begin{thm}
		All solutions of system (\ref{1.2}) with non-negative initial conditions remain non-negative for all $t \geq 0 $.
	\end{thm}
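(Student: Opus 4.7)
The plan is to apply the non-negativity criterion of Theorem 2.3 in \cite{cao2024} directly to the reformulation $\dot U = g(t,U_t)$ already given before the statement. That criterion is the standard tangent (quasi-positivity) condition for delay systems: if for every $j$ and every $\phi\in C([-\tau,0],\mathbb{R}^5_{\geq 0})$ with $\phi_j(0)=0$ we have $g_j(t,\phi)\geq 0$, then the non-negative cone is forward invariant. So the whole proof reduces to checking this condition componentwise for the right-hand side of system (\ref{1.2}).

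First I would list what happens when one coordinate touches zero while the remaining coordinates (and the delayed values of all coordinates) are non-negative. For $x$: if $x(t)=0$ and all other components of $U_t$ are non-negative, then $g_1=\lambda>0$. For $p$: if $p(t)=0$, the linear decay term vanishes and the remaining term $\rho e^{-m_1\tau_1}x(t-\tau_1)[\beta_1 v(t-\tau_1)+\beta_2 y(t-\tau_1)]$ is non-negative because each factor is non-negative. For $y$: if $y(t)=0$, the $-\mu_3 y-ayz$ contribution vanishes and we are left with $(1-\rho)e^{-m_2\tau_2}x(t-\tau_2)[\beta_1 v(t-\tau_2)+\beta_2 y(t-\tau_2)]+\alpha p(t)\geq 0$. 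For $v$: if $v(t)=0$, then $g_4=ky(t)\geq 0$. For $z$: if $z(t)=0$, then $-\mu_5 z-\eta yz=0$, and the first term is $\frac{c\,y(t-\tau_3)z(t-\tau_3)}{h+z(t-\tau_3)}\geq 0$ since $h>0$ and the delayed values are non-negative.

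With the tangent condition verified in all five cases, Theorem 2.3 of \cite{cao2024} yields forward invariance of $\mathbb{R}^5_{\geq 0}$, which is exactly the conclusion. Existence and uniqueness on $[0,\infty)$ for non-negative initial data follow from the standard continuation argument together with the a priori bound that will be established in the subsequent boundedness result (or, directly, from the Lipschitz property of $g$ together with forward invariance, which prevents finite-time blow-up toward the boundary).

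The only subtle point, and the one I would phrase carefully, is the well-definedness of $g_5$ at $z(t-\tau_3)=0$: the quotient $\frac{c\,y(t-\tau_3)z(t-\tau_3)}{h+z(t-\tau_3)}$ is continuous at $z(t-\tau_3)=0$ because $h>0$, so no removable-singularity argument is needed and the tangent condition applies verbatim. This is really the only place where the model departs from the bilinear setting of \cite{cao2024}, so I would flag it explicitly; every other verification is mechanical once the five boundary cases have been enumerated.
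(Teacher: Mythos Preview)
Your proposal is correct and follows exactly the paper's approach: both invoke Theorem~2.3 of \cite{cao2024} on the reformulation $\dot U = g(t,U_t)$, with you supplying the explicit componentwise verification of the tangent (quasi-positivity) condition that the paper merely asserts ``is clear.'' Your remark on the continuity of $g_5$ at $z(t-\tau_3)=0$ is a sensible addition, though since $h>0$ it is indeed routine.
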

	\subsection{Boundedness}
	\vskip 0.2cm
	\begin{thm}\label{thm1}
		The solutions of system (\ref{1.2}) are uniformly bounded for $t\ge 0$ under initial condition (\ref{1.3}).
	\end{thm}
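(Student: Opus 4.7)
The plan is to bound $x$ first from its own equation, then combine $p$, $y$ with two delay-correcting integral terms into a single Lyapunov-type functional $G$ whose derivative satisfies a linear-decay inequality, and finally treat $v$ and $z$ by elementary scalar comparison.

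First, from the first equation of system~(\ref{1.2}) one has $\dot{x}\le \lambda-\mu_1 x$, so the standard comparison principle yields $x(t)\le M_1:=\max\{x(0),\lambda/\mu_1\}$ for all $t\ge 0$. Setting $w_1:=\rho e^{-m_1\tau_1}$ and $w_2:=(1-\rho)e^{-m_2\tau_2}$, I then introduce
\begin{equation*}
G(t) := x(t)+p(t)+y(t) + w_1\!\int_{t-\tau_1}^{t}\! x(s)[\beta_1 v(s)+\beta_2 y(s)]\,ds + w_2\!\int_{t-\tau_2}^{t}\! x(s)[\beta_1 v(s)+\beta_2 y(s)]\,ds.
\end{equation*}
Upon differentiation, the lower-endpoint contributions of the two integrals cancel exactly the delayed production terms appearing in $\dot{p}$ and $\dot{y}$, while the $\alpha p$ terms cancel between $\dot{p}$ and $\dot{y}$. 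What remains is the infection input with total weight $(w_1+w_2-1)\le 0$ together with $-\mu_1 x-\mu_2 p-\mu_3 y-ayz$, so
\begin{equation*}
\dot{G}(t)\;\le\;\lambda-\mu_\star\bigl(x(t)+p(t)+y(t)\bigr),\qquad \mu_\star:=\min\{\mu_1,\mu_2,\mu_3\}.
\end{equation*}

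The crux is upgrading this to a closed inequality of the form $\dot{G}\le c_1-c_2 G$. Using the first equation to rewrite the integrand as $x(s)[\beta_1 v(s)+\beta_2 y(s)]=\lambda-\dot{x}(s)-\mu_1 x(s)$, each delay integral telescopes and is bounded above by $\lambda\tau_i+M_1$; hence there is an explicit constant $C>0$ with $G\le(x+p+y)+C$, so $x+p+y\ge G-C$, and therefore $\dot{G}\le(\lambda+\mu_\star C)-\mu_\star G$. Scalar comparison now gives a uniform bound on $G$, and hence on $x,p,y$. For $v$, the linear equation $\dot{v}=ky-\mu_4 v$ together with the bound on $y$ yields boundedness of $v$ by comparison. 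For $z$, since $z(t-\tau_3)/[h+z(t-\tau_3)]\le 1$, the fifth equation gives $\dot{z}\le c\,y(t-\tau_3)-\mu_5 z$, so $z$ is also uniformly bounded. The main technical obstacle I anticipate is choosing the integral correctors in $G$ with the right weights and then recognising that, via the conservation identity hidden in the $\dot{x}$ equation, these delay integrals are themselves \emph{a priori} bounded; without this observation the estimate $\dot{G}\le\lambda-\mu_\star(x+p+y)$ does not close into the form required for a Gronwall-type conclusion.
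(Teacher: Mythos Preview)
Your argument is correct, and it takes a genuinely different route from the paper's. The paper does not use delay-correcting integrals at all; instead it introduces two \emph{forward-shifted} functionals
\[
G_1(t)=x(t)+\tfrac{1}{\rho}e^{m_1\tau_1}p(t+\tau_1),\qquad
G_2(t)=x(t)+\tfrac{1}{1-\rho}e^{m_2\tau_2}y(t+\tau_2),
\]
so that upon differentiation the delayed production terms in $\dot p(t+\tau_1)$ and $\dot y(t+\tau_2)$ become \emph{undelayed} and cancel against $-x(t)[\beta_1 v+\beta_2 y]$ from $\dot x$. This yields $G_1'\le\lambda-\sigma_1 G_1$ directly, giving a bound on $p$; that bound is then fed into $G_2'$ to control the $\alpha p$ term and obtain a bound on $y$. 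The argument is sequential ($p$ first, then $y$) but each step closes immediately into a linear-decay inequality without any auxiliary estimate. Your approach, by contrast, treats $p$ and $y$ simultaneously in a single Lyapunov--Krasovskii functional, at the price of having to bound the integral correctors separately; the observation that $x(s)[\beta_1 v(s)+\beta_2 y(s)]=\lambda-\dot x(s)-\mu_1 x(s)$ telescopes these integrals is exactly the extra ingredient the paper avoids by time-shifting. Both methods are standard in delayed virus-dynamics models; the paper's is slightly shorter, while yours is arguably more robust to variations in the model structure since it does not rely on matching a forward shift to a specific delay.
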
 
	\begin{proof} Under the starting parameters specified in (\ref{1.3}), we denote by $(x(t), p(t), y(t), v(t), z(t))$ any positive solution to the system (\ref{1.2}). Define
		$$G_1(t)=x(t)+\frac{1}{\rho} e^{m_1\tau_1} p(t+\tau_1).$$
	Given the initial condition (\ref{1.3}), differentiating $G_1(t)$ along the trajectories of the positive solutions in system (\ref{1.2}) allows us to deduce
		$$
		\begin{aligned}
			G_1'(t)&=x'(t)+\frac{1}{\rho} e^{m_1\tau_1} p'(t+\tau_1)\\
			&=\lambda-\mu_1 x(t)-\frac{1}{\rho} e^{m_1\tau_1}(\alpha+\mu_2)p(t+\tau_1)\\
			&\le\lambda-\sigma_1 G_1(t),
		\end{aligned}
		$$
		yielding that $\lim_{t\to\infty}G_1(t)\le\frac{\lambda}{\sigma_1}$, where $\sigma_1=\min\left\{\mu_1,\alpha+\mu_2\right\}$. Consequently, given a sufficiently small $\delta_1 > 0$, there exists a $T_1 > 0$ such that if $t > T_1$, the following holds
		$$G_1(t)\le \frac{\lambda}{\sigma_1}+\delta_1.$$
		Since $G_1(t)=x(t)+\frac{1}{\rho} e^{m_1\tau_1} p(t+\tau_1)\le\frac{\lambda}{\sigma_1}+\delta_1$, it follows that $p(t)\le\rho e^{-m_1\tau_1}(\frac{\lambda}{\sigma_1}+\delta_1)$.\\
		Define$$G_2(t)=x(t)+\frac{1}{1-\rho}e^{m_2\tau_2}y(t+\tau_2).$$
Upon differentiating $G_2(t)$ along the positive trajectories of the system (\ref{1.2}), considering the initial condition (\ref{1.3}), we obtain the derivative 
		$$
		\begin{aligned}
			G_2'(t)&=x'(t)+\frac{1}{1-\rho}e^{m_2\tau_2}y'(t+\tau_2)\\
			&=\lambda-\mu_1 x(t)+\frac{\alpha}{1-\rho}e^{m_2\tau_2}p(t+\tau_2)-\frac{\mu_3}{1-\rho}e^{m_2\tau_2}y(t+\tau_2)-\frac{a}{1-\rho}e^{m_2\tau_2}y(t+\tau_2)z(t+\tau_2)\\
			&\le\lambda+\frac{\alpha\rho}{1-\rho}e^{m_2\tau_2-m_1\tau_1}(\frac{\lambda}{\sigma_1}+\delta_1)-\mu_1 x(t)-\frac{\mu_3}{1-\rho}e^{m_2\tau_2}y(t+\tau_2)\\
			&\le M-\sigma_2 G_2(t),
		\end{aligned}
		$$
		yielding that $\lim_{t\to\infty}G_2(t)\le\frac{M}{\sigma_2}$, where $M=\lambda+\frac{\alpha\rho}{1-\rho}e^{m_2\tau_2-m_1\tau_1}(\frac{\lambda}{\sigma_1}+\delta_1)$, $\sigma_2=\min\left\{\mu_1,\mu_3\right\}$. Consequently, for a sufficiently small $\delta_2 > 0$, there exists a $T_2 > 0$ that means if $t > T_2$, we have
		$$G_2(t)\le \frac{M}{\sigma_2}+\delta_2.$$
		Let $T = \max\left\{T_1, T_2\right\}$. Therefore, when $t > T$, it holds that $$G_1(t) \leq \frac{\lambda}{\sigma_1} + \delta_1,$$ $$G_2(t) \leq \frac{M}{\sigma_2} + \delta_2.$$
		Since $G_2(t) \leq \frac{M}{\sigma_2} + \delta_2$, it follows that $y(t)\le(1-\rho)e^{-m_2\tau_2}(\frac{M}{\sigma_2}+\delta_2).$
		\\
In addition, according to the system's fourth and fifth equations (\ref{1.2}), if $t>T$,
		$$v'(t)\le k(1-\rho)e^{-m_2\tau_2}(\frac{M}{\sigma_2}+\delta_2)-\mu_4 v(t),$$
		$$z'(t)\le c(1-\rho)e^{-m_2\tau_2}(\frac{M}{\sigma_2}+\delta_2)-\mu_5 z(t).$$
		Since $\delta_1 > 0$, $\delta_2 > 0$ can be arbitrarily small positive numbers, we therefore deduce that
		$$\lim_{t\to\infty}\sup v(t)\le\frac{k(1-\rho)M}{\mu_4 \sigma_2} e^{-m_2\tau_2}\le \frac{k(1-\rho)\lambda}{\mu_4 \sigma_2}e^{-m_2\tau_2}+\frac{k\alpha\rho\lambda}{\mu\sigma_2\sigma_1}e^{-m_1\tau_1},$$
		$$\lim_{t\to\infty}\sup z(t)\le\frac{c(1-\rho)M}{\mu_5\sigma_2}e^{-m_2\tau_2}\le\frac{c(1-\rho)\lambda}{\mu_5\sigma_2}e^{-m_2\tau_2}+\frac{c\alpha\rho\lambda}{\mu_5\sigma_2\sigma_1}e^{-m_1\tau_1}.$$
		Hence, for all $t \ge 0$, $x(t)$, $p(t)$, $y(t)$, $v(t)$, and $z(t)$ are ultimately bounded, and the following set $\Omega$ is positively invariant for the system (\ref{1.2}): $\Omega=\Bigg\{(x,p,y,v,z):x\le$
		$\min\Big\{\frac{\lambda}{\sigma_1},\frac{\lambda}{\sigma_2}+\frac{\alpha\rho\lambda}{(1-\rho)\sigma_1\sigma_2}e^{m_2\tau_2-m_1\tau_1}\Big\}, p\le\rho\frac{\lambda}{\sigma_1}e^{-m_1\tau_1}, y\le\frac{(1-\rho)\lambda}{\sigma_2}e^{-m_2\tau_2}+\frac{\alpha\rho\lambda}{\sigma_2\sigma_1}e^{-m_1\tau_1}, v\le\frac{k(1-\rho)\lambda}{\mu_4 \sigma_2}e^{-m_2\tau_2}+\frac{k\alpha\rho\lambda}{\mu\sigma_2\sigma_1}e^{-m_1\tau_1},$ 
		$z\le\frac{c(1-\rho)\lambda}{\mu_5\sigma_2}e^{-m_2\tau_2}+\frac{c\alpha\rho\lambda}{\mu_5\sigma_2\sigma_1}e^{-m_1\tau_1}\Bigg\}.$ 
	\end{proof}
	\subsection{Viable equilibria and reproductive ratios}
	
	In the following, we compute the steady states and use the next-generation approach \cite{WOS:000441251900006,WOS:000179220600004} to derive the basic reproduction number from system (\ref{1.2}). First, we specify the matrices $\mathbb{F}$ and $\mathbb{V}$ as:
	$$\left.\mathbb{F}=\left(\begin{matrix}0&\rho\beta_2 e^{-m_1\tau_1} x_0&\rho\beta_1 e^{-m_1\tau_1}x_0\\0&(1-\rho)\beta_2 e^{-m_2\tau_2}x_0&(1-\rho)\beta_1 e^{-m_2\tau_2}x_0\\0&0&0\end{matrix}\right.\right),\qquad\left.\mathbb{V}=\left(\begin{matrix}\alpha+\mu_2&0&0\\-\alpha&u_{3}&0\\0&-k&\mu_4\end{matrix}\right.\right),$$
	where $x_0=\frac{\lambda}{\mu_1}$. Then
	$$\left.\mathbb{FV}^{-1}=\left(\begin{matrix}\psi_1&\psi_2&\psi_3\\\psi_4&\psi_5&\psi_6\\0&0&0\end{matrix}\right.\right),$$
	where
	$$
	\begin{aligned}
		&\psi_1=\frac{\alpha\rho\beta_2 e^{-m_1\tau_1}}{\mu_3(\alpha+\mu_2)}x_0+\frac{\alpha k\rho\beta_1 e^{-m_1\tau_1}}{\mu_3\mu_4(\alpha+\mu_2)}x_0,\\
		&\psi_2=\frac{\rho\beta_2 e^{-m_1\tau_1}}{\mu_3}x_0+\frac{ k\rho\beta_1 e^{-m_1\tau_1}}{\mu_3\mu_4}x_0,\\
		&\psi_3=\frac{\rho\beta_1 e^{-m_1\tau_1}}{\mu_4}x_0,\\
		&\psi_4=\frac{\alpha(1-\rho)\beta_2 e^{-m_2\tau_2}}{\mu_3(\alpha+\mu_2)}x_0+\frac{\alpha k(1-\rho)\beta_1 e^{-m_2\tau_2}}{\mu_3\mu_4(\alpha+\mu_2)}x_0,\\
		&\psi_5=\frac{(1-\rho)\beta_2 e^{-m_2\tau_2}}{\mu_3(\alpha+\mu_2)}x_0+\frac{ k(1-\rho)\beta_1 e^{-m_2\tau_2}}{\mu_3\mu_4(\alpha+\mu_2)}x_0,\\
		&\psi_6=\frac{(1-\rho)\beta_1 e^{-m_2\tau_2}}{\mu_4}x_0.
	\end{aligned}
	$$
	The spectral radius of $\mathbb{FV}^{-1}$ can be used to determine the basic reproduction number $\mathcal{R}_0$:
	
	$$\mathcal{R}_0=\left(\frac{k\beta_1 x_0}{\mu_3\mu_4}+\frac{\beta_2 x_0}{\mu_3}\right)\gamma,$$
	where
	\begin{equation}\label{2.1}
		\gamma=\left(\frac{\alpha\rho}{\alpha+\mu_2}e^{-m_1\tau_1}+(1-\rho)e^{-m_2\tau_2}\right).
	\end{equation}
	The parameter $\mathcal{R}_0$ can be expressed as $\mathcal{R}_0 = \mathcal{R}_{01} + \mathcal{R}_{02}$, where
	$$\mathcal{R}_{01}=\frac{k\beta_1 x_0\gamma}{\mu_3\mu_4},\qquad\mathcal{R}_{02}=\frac{\beta_2 x_0\gamma}{\mu_3}.$$
	The model has three equilibrium:
	
	(1) The non-infected equilibrium $$E_0=(x_0,0,0,0,0)=\left(\frac{\lambda}{\mu_1},0,0,0,0\right).$$
	
	(2) The infected equilibrium with an immunity-inactivated $E_1=(x_1,p_1,y_1,v_1,0)$, where
	$$
	\begin{aligned}
		&x_1=\frac{x_0}{\mathcal{R}_0},\qquad p_1=\frac{\rho\lambda e^{-m_1\tau_1}}{(\alpha+\mu_2)\mathcal{R}_0}(\mathcal{R}_0 -1),\\
		&y_1=\frac{\mu_1\mu_4}{k\beta_1+\mu_4\beta_2}(\mathcal{R}_0-1),\qquad v_1=\frac{k}{\mu_4}y_1,\qquad z_1=0.
	\end{aligned}
	$$
	
	We can obtain the CTL immune reproduction number $\mathcal{R}_1$ using a similar approach as for $\mathcal{R}_0$ \cite{WOS:000179220600004}, where
	$$\mathcal{R}_1=\frac{cy_1}{h(\mu_5+\eta y_1)}.$$
	
	(3) The infectted equilibrium with active immunity, denoted as $E_2=(x_2,p_2,y_2,v_2,z_2)$, which satisfying the following system:
	\begin{equation}\label{2.2}
		\begin{aligned}
			&\lambda-x(\beta_{1}v+\beta_{2}y)-\mu_1x=0,\\
			&\rho e^{-m_1\tau_1}x(\beta_{1}v+\beta_{2}y)-\alpha p-\mu_{2}p=0,\\
			&(1-\rho)e^{-m_2 \tau_{2}}x(\beta_{1}v+\beta_{2}y)+\alpha p-\mu_{3}y-ayz=0,\\
			&ky-\mu_{4}v=0,\\
			&\frac{c yz}{h+z}-\mu_{5}z-\eta yz=0.
		\end{aligned}
	\end{equation}
	We conclude that 
	\begin{equation}\label{2.3}
		f_1(y)\triangleq z=\frac{cy}{\mu_5+\eta y}-h,	
	\end{equation}
	\begin{equation}\label{2.4}
		f_2(z)\triangleq y=\frac{\gamma\lambda(\beta_1 k+\beta_2\mu_4)-\mu_1\mu_4(\mu_3+az)}{(\mu_3+az)(\beta_1 k+\beta_2\mu_4)}.	
	\end{equation}
	
From Eq.(\ref{2.3}), it is evident that $z$ grows monotonically with $y$. Our calculations reveal that $z = 0$ at $y = \frac{\mu_5 h}{c-\eta h}$ and $z$ reaches $-h$ when $y$ is zero. Moreover, as $y \rightarrow \infty$, $\frac{cy}{h(\mu_5+\eta y)}$ converges to $\frac{c}{h\eta}$. Hence, as $y \rightarrow \infty$, the function $f_1(y)$ approaches a horizontal asymptote at $z = \frac{c-\eta h}{\eta}$.

In Eq.(\ref{2.4}), $y$ decreases monotonically with $z$. From this relationship, it follows that $y$ equals $y_1$ at $z = 0$ and drops to zero when $z$ reaches $\frac{\mu_3(\mathcal{R}_0-1)}{a}$. When $\mathcal{R}_1 > 1$, we have $y_1>\frac{\mu_5\eta}{c-\eta h}$. Additionally, through calculations, we find that as $z \rightarrow \infty$, $f_2(z)$ approaches a horizontal asymptote at $y = -\frac{\mu_1 \mu_4}{\beta_1 k+\beta_2\mu_4}$. Therefore, the curves of two functions defined in Eqs.(\ref{2.3}) and (\ref{2.4}) intersect at exactly one point $(z_2, y_2)$ within the interval $z\in\left(0,\frac{c-\eta h}{h}\right)$ (see Figure 2).
	\begin{figure}[htbp] 
		\centering
		\includegraphics[width=11cm,height=8cm]{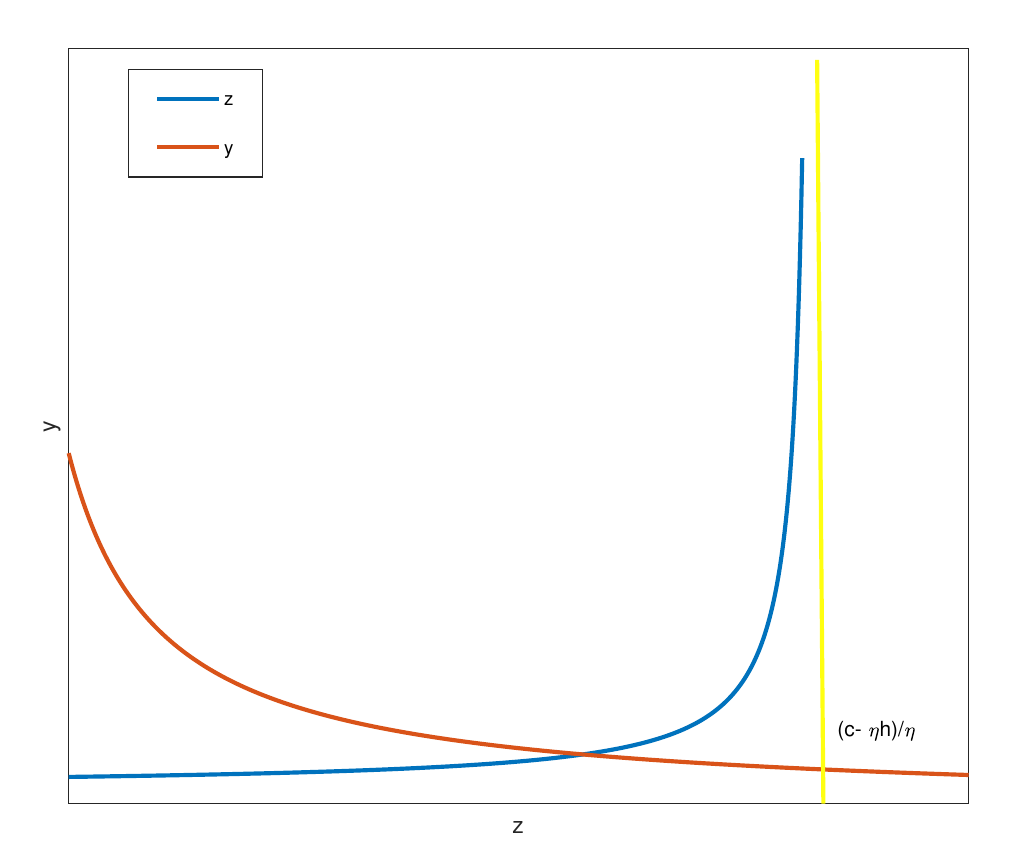}
		\caption{The curves of functions $z$ and $y$}
		\label{fig:image1}
	\end{figure}
	\\
	First, we define
	$$
	\begin{aligned}
		&b_1=ac(\beta_1 k+\beta_2 \mu_4)+\mu_3\eta(\beta_1 k+\beta_2 \mu_4)-ah\eta(\beta_1 k+\beta_2 \mu_4),\\
		&b_2=ac\mu_1\mu_4+\mu_3\mu_5(\beta_1 k+\beta_2 \mu_4)+\mu_1\mu_3\mu_4\eta-\gamma\eta\lambda(\beta_1 k+\beta_2 \mu_4)-ah\mu_5(\beta_1 k+\beta_2 \mu_4)-ah\mu_1\mu_4\eta,\\
		&b_3=-\gamma\mu_5\lambda(\beta_1 k+\beta_2 \mu_4)+\mu_1\mu_3\mu_4\mu_5-ah\mu_1\mu_4\mu_5.
	\end{aligned}
	$$
	Next, using Eqs.(\ref{2.3}) and (\ref{2.4}), we can derive $$y_2=\frac{-b_2+\sqrt{b_2^2-4b_1 b_3}}{b_1}>0.$$
	Similarly, we can obtain the expressions for $z_2$ and $v_2$ 
	$$z_2=\frac{cy}{\mu_5+\eta y}-h>0,\qquad v_2 =\frac{k}{\mu_4}y_2>0.$$ 
	Additionally, by considering the system (\ref{2.2}) evaluated at the point $E_2$, we have
	$$x_2=\frac{\lambda}{\mu_1+\beta_1 v_2+\beta_2 y_2}>0,$$
	$$p_2=\frac{\rho e^{-m_1\tau_1}}{\alpha+\mu_2}\left(\frac{\lambda \beta_1 k y_2+\lambda\beta_2\mu_4 y_2}{\mu_1\mu_4+\beta_1 k y_2+\beta_2 \mu_4 y_2}\right)>0.$$

	\section{Examination of equilibrium stability}
This part of our study initially focuses on determining the prerequisites for the local stability of three equilibria: the non-infected equilibrium $E_0$, the immune-inactivated equilibrium $E_1$, and the endemic equilibrium $E_2$ (including CTL responses). Next, we will evaluate the global stability properties of all attainable equilibria in the system (\ref{1.2}) through a rigorous analytical approach, employing Lyapunov functionals to verify asymptotic convergence across the entire domain.
	\subsection{Local asymptotic stability}
	\vskip 0.2cm
	\begin{thm}\label{thm1}
		If $\mathcal{R}_0 < 1$, the equilibrium $E_0$ is locally asymptotically stable for any positive values of $\tau_1$, $\tau_2$, and $\tau_3$. Conversely, $E_0$ is unstable when $\mathcal{R}_0 > 1$.
	\end{thm}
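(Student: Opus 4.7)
The plan is to linearize system (\ref{1.2}) at $E_0=(x_0,0,0,0,0)$ and study the resulting transcendental characteristic equation. Because the linearized equation for $z$ decouples as $\dot Z=-\mu_5 Z$ and the linearized equation for $x$ reduces to $\dot X=-\mu_1 X + \text{(forcing)}$ with the forcing contributing nothing to the eigenvalue from that row, two roots $\lambda=-\mu_1$ and $\lambda=-\mu_5$ peel off immediately. The remaining three eigenvalues arise from the $(p,y,v)$ block; after substituting $V=kY/(\lambda+\mu_4)$ from the fourth equation and then eliminating $P$ via the second, I expect the characteristic equation to reduce to
$$1 = \frac{\beta_1 k x_0\,\Gamma(\lambda)}{(\lambda+\mu_3)(\lambda+\mu_4)} + \frac{\beta_2 x_0\,\Gamma(\lambda)}{\lambda+\mu_3},$$
where $\Gamma(\lambda):=(1-\rho)e^{-(m_2+\lambda)\tau_2}+\dfrac{\alpha\rho\,e^{-(m_1+\lambda)\tau_1}}{\lambda+\alpha+\mu_2}$. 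A direct check gives $\Gamma(0)=\gamma$ from (\ref{2.1}), so evaluating the right-hand side at $\lambda=0$ recovers exactly $\mathcal{R}_{01}+\mathcal{R}_{02}=\mathcal{R}_0$, tying the equation to the threshold.

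For the stability direction ($\mathcal{R}_0<1$), I would argue by contradiction. Suppose $\lambda$ is a characteristic root with $\operatorname{Re}\lambda\ge 0$. Then $|e^{-\lambda\tau_i}|\le 1$, $|\lambda+\mu_j|\ge\mu_j$, and $|\lambda+\alpha+\mu_2|\ge\alpha+\mu_2$, which combine to give $|\Gamma(\lambda)|\le\gamma$. Applying the triangle inequality to the displayed equation bounds the modulus of its right-hand side by $\mathcal{R}_{01}+\mathcal{R}_{02}=\mathcal{R}_0<1$, contradicting the equality to $1$. Hence no root can satisfy $\operatorname{Re}\lambda\ge 0$; together with the already-negative roots $-\mu_1$ and $-\mu_5$, this yields local asymptotic stability of $E_0$ for all $\tau_1,\tau_2,\tau_3\ge 0$.

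For the instability claim ($\mathcal{R}_0>1$), I would restrict to real $\lambda\ge 0$ and let $\Phi(\lambda)$ denote the right-hand side of the displayed equation minus $1$. Since $\Gamma(\lambda)>0$ on $[0,\infty)$, $\Phi$ is continuous there with $\Phi(0)=\mathcal{R}_0-1>0$, whereas $\Phi(\lambda)\to-1$ as $\lambda\to+\infty$, because both rational factors vanish while $\Gamma$ remains bounded. The intermediate value theorem then furnishes a real $\lambda^*>0$ solving the characteristic equation, proving $E_0$ is unstable.

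The main obstacle I expect is algebraic rather than conceptual: namely, carrying out the elimination that collapses the $3\times 3$ characteristic determinant of the $(p,y,v)$ block into the compact form above without sign errors, and confirming that $\lambda+\alpha+\mu_2$ never vanishes on the closed right half-plane (immediate since $\alpha+\mu_2>0$) so that the substitution producing $\Gamma(\lambda)$ is legitimate. Once the equation is in the form displayed above, the single modulus estimate $|\Gamma(\lambda)|\le\gamma$ is the one nontrivial ingredient, and both halves of the theorem drop out cleanly from it.
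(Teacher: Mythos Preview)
Your proposal is correct and follows essentially the same route as the paper: both factor off the roots $-\mu_1$ and $-\mu_5$, rewrite the remaining $(p,y,v)$ characteristic equation in the form $1=\Gamma(\lambda)\bigl[\beta_2 x_0/(\lambda+\mu_3)+k\beta_1 x_0/((\lambda+\mu_3)(\lambda+\mu_4))\bigr]$, then use the modulus bound $|\Gamma(\lambda)|\le\gamma$ on $\{\operatorname{Re}\lambda\ge 0\}$ to force $|\text{RHS}|\le\mathcal{R}_0<1$ for stability, and an intermediate-value argument on the real axis for instability. Your write-up is in fact slightly cleaner than the paper's (you explicitly isolate $\Gamma(\lambda)$ and treat $\operatorname{Re}\lambda=0$ correctly), but the mathematical content is identical.
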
 
	\begin{proof} We take into model (\ref{1.2}) and can get the characteristic equation
		\begin{equation}\label{3.1}(s+\mu_1)(s+\mu_5)g_0(s)=0,\end{equation}
		where
		$$
		\begin{aligned}
			g_0(s)=&(s+\alpha+\mu_2)(s+\mu_3)(s+\mu_4)-\alpha\rho e^{-(m_1+s)\tau_1}x_0(\beta_2 s+\mu_4\beta_2+k \beta_1)\\
			&-(1-\rho)e^{-(m_2+s)\tau_2}x_0[(s+\alpha+\mu_2)(\beta_2 s+\mu_4\beta_2+k\beta_1)].
		\end{aligned}
		$$
	Clearly, $s_0^*=-\mu_1$ and $s_0^{**}=-\mu_5$ are negative solutions for Eq.(\ref{3.1}), while the other roots are ascertained by the subsequent equation
		\begin{equation}\label{3.2}g_0(s)=0.\end{equation}
		We can confidently assert that Eq.(\ref{3.2}) possesses roots with solely negative real parts. Alternatively, if this is not the case, the equation must have at least one root $s_0 = \text{Re }s_0 + i\text{Im }s_0$ where $\text{Re }s_0 > 0$. Let $s_0$ be a root of the equation, and both sides of Eq.(\ref{3.2}) are divided by $(s_0+\alpha+\mu_2)(s_0+\mu_3)(s_0+\mu_4)$. It follows that
		$$
		\begin{aligned}
			1&=\left|\frac{(1-\rho)e^{-(m_2+s_0)\tau_2}x_0(\beta_2 s_0+\mu_4\beta_2+k\beta_1)}{(s_0+\mu_3)(s_0+\mu_4)}+\frac{\alpha\rho e^{-(m_1+s_0)\tau_1}x_0(\beta_2 s_0+\mu_4\beta_2+k\beta_1)}{(s_0+\mu_3)(s_0+\mu_4)(s_0+\alpha+\mu_2)}\right|\\
			&<\left|\frac{(1-\rho)e^{-(m_2+s_0)\tau_2}x_0(\beta_2 s_0+\mu_4\beta_2+k\beta_1)}{(s_0+\mu_3)(s_0+\mu_4)}+\frac{\alpha\rho e^{-(m_1+s_0)\tau_1}x_0(\beta_2 s_0+\mu_4\beta_2+k\beta_1)}{(\alpha+\mu_2)(s_0+\mu_3)(s_0+\mu_4)}\right|\\
			&<\frac{\gamma x_0[(s_0+\mu_4)\beta_2+k\beta_1)]}{(s_0+\mu_3)(s_0+\mu_4)}=\frac{\gamma x_0\beta_2}{s_0+\mu_3}+\frac{\gamma k x_0\beta_1}{(s_0+\mu_2)(s_0+\mu_4)}\\
			&<\frac{\gamma(\mu_4\beta_2+k\beta_1)x_0}{\mu_3\mu_4}=\mathcal{R}_0\\
			&<1,
		\end{aligned}
		$$
		which contradicts $\mathcal{R}_0<1$. Thus, every solution to Eq.(\ref{3.1}) exhibits negative real components, demonstrating that \( E_0 \) converges steadily in a local sense. In contrast, once the threshold \( \mathcal{R}_0 > 1 \) is surpassed, we deduce
		$$
		\begin{aligned}
			g_0(0)=&\mu_3\mu_4(\alpha+\mu_2)-(\alpha+\mu_2)x_0(1-\rho)e^{-m_2\tau_2}(\mu_4\beta_2+k\beta_1)-x_0\alpha\rho e ^{-m_1\tau_1}(\mu_4\beta_2+k\beta_1)\\
			=&\mu_3\mu_4(\alpha+\mu_2)(1-\mathcal{R}_0)<0.
		\end{aligned}
		$$
		Moreover, $g_0(s)\to+\infty$ as $s\to +\infty $. Hence, Eq.(\ref{3.2}) has a positive
		root $\omega_0\in(0,+\infty)$ such that $g_0(\omega_0) = 0$ if $\mathcal{R}_0 > 1$. That is to say, $E_0$ becomes unstable when $\mathcal{R}_0>1$.
	\end{proof}
	\begin{thm}\label{thm1}
	If $\mathcal{R}_1 < 1 < \mathcal{R}_0$, the equilibrium $E_1$ is locally asymptotically stable for any positive values of $\tau_1$, $\tau_2$, and $\tau_3$. However, $E_1$ becomes unstable when $\mathcal{R}_1 > 1$.
	\end{thm}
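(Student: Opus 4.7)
The plan is to linearize system (\ref{1.2}) at $E_1=(x_1,p_1,y_1,v_1,0)$ and study the resulting transcendental characteristic equation, parallelling the treatment of $E_0$ in Theorem 3.1. Because $z_1=0$, all partial derivatives of the fifth component of the vector field with respect to $x,p,y,v$ vanish at $E_1$; in particular $\partial_y[cyz/(h+z)]|_{z=0}=0$. The fifth row of the characteristic matrix is therefore zero off the diagonal, so the equation factors as
\begin{equation*}
\Delta(s)=\Big(s+\mu_5+\eta y_1-\tfrac{cy_1}{h}\,e^{-s\tau_3}\Big)\,\Delta_4(s)=0,
\end{equation*}
where $\Delta_4(s)$ is the $4\times 4$ determinant of the $(x,p,y,v)$-block.

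The scalar factor $h_1(s):=s+\mu_5+\eta y_1-\tfrac{cy_1}{h}e^{-s\tau_3}$ carries all the dependence on $\mathcal R_1$. Using $cy_1/h=\mathcal R_1(\mu_5+\eta y_1)$ one has $h_1(0)=(\mu_5+\eta y_1)(1-\mathcal R_1)$. If $\mathcal R_1>1$ then $h_1(0)<0$ and $h_1(s)\to+\infty$ as $s\to+\infty$, so $h_1$ admits a positive real root and $E_1$ is unstable. If $\mathcal R_1<1$, any putative root $s_\ast$ with $\operatorname{Re} s_\ast\ge 0$ would force $|s_\ast+\mu_5+\eta y_1|\ge\mu_5+\eta y_1$ while $|cy_1\,e^{-s_\ast\tau_3}/h|\le\mathcal R_1(\mu_5+\eta y_1)<\mu_5+\eta y_1$, a contradiction.

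For the remaining block I plan to expand $\Delta_4(s)$ along the first column and simplify using the abbreviations $A=\beta_1 v_1+\beta_2 y_1$, $F(s)=x_1[\beta_2(s+\mu_4)+k\beta_1]$, and $G(s)=(s+\alpha+\mu_2)(1-\rho)e^{-(m_2+s)\tau_2}+\alpha\rho\,e^{-(m_1+s)\tau_1}$. A cancellation between the $(p,\cdot)$ and $(y,\cdot)$ cofactor contributions should collapse $\Delta_4(s)$ to the compact identity
\begin{equation*}
\Delta_4(s)=(s+\mu_1+A)(s+\alpha+\mu_2)(s+\mu_3)(s+\mu_4)-(s+\mu_1)\,F(s)\,G(s).
\end{equation*}
If $\Delta_4(s_\ast)=0$ with $\operatorname{Re} s_\ast\ge 0$, dividing by the first product and taking moduli yields
\begin{equation*}
1=\frac{|s_\ast+\mu_1|}{|s_\ast+\mu_1+A|}\cdot\left|\frac{F(s_\ast)\,G(s_\ast)}{(s_\ast+\alpha+\mu_2)(s_\ast+\mu_3)(s_\ast+\mu_4)}\right|.
\end{equation*}
The first factor is strictly less than $1$ because $A=\mu_1(\mathcal R_0-1)>0$. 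For the second, the termwise estimates $|e^{-s_\ast\tau_j}|\le 1$ and $|s_\ast+c|\ge c$ for positive $c$, together with the equilibrium identity $\gamma x_1(k\beta_1+\beta_2\mu_4)=\mu_3\mu_4$ (equivalent to $x_1=x_0/\mathcal R_0$), bound it above by $\gamma\cdot x_1(k\beta_1+\beta_2\mu_4)/(\mu_3\mu_4)=1$. The product therefore satisfies $1<1$, a contradiction, so every root of $\Delta_4(s)=0$ has negative real part.

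The main obstacle is producing the compact factored form for $\Delta_4(s)$: the delayed factors $e^{-s\tau_1}$ and $e^{-s\tau_2}$ appear simultaneously across the $p$-row and the $y$-row, so a direct expansion generates apparent $e^{-s(\tau_1+\tau_2)}$ cross terms. The key algebraic observation is that $G(s)$ packages the two delays into a single quantity and that, after combining the three first-column cofactor contributions, the prefactor of $F(s)G(s)$ collapses from $(s+\mu_1+A)-A$ to $(s+\mu_1)$; this $A$-cancellation is precisely what supplies the strict inequality in the modulus estimate and thus separates the $\mathcal R_1<1$ stability regime from the $E_0$ analysis.
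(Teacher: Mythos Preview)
Your proposal is correct and follows essentially the same route as the paper: factor the characteristic equation at $E_1$ into the scalar $z$-block $s+\mu_5+\eta y_1-\tfrac{cy_1}{h}e^{-s\tau_3}$ and the $4\times4$ block $\Delta_4(s)$, handle the scalar factor by a modulus argument tied to $\mathcal R_1$ (and the intermediate value theorem for instability when $\mathcal R_1>1$), and rule out roots of $\Delta_4$ with nonnegative real part via a modulus contradiction that exploits $|s_\ast+\mu_1|<|s_\ast+\mu_1+A|$ together with the equilibrium identity $\gamma x_1(k\beta_1+\beta_2\mu_4)=\mu_3\mu_4$. Your splitting of the $\Delta_4$ estimate into two factors (one strictly $<1$, one $\le 1$) is a slightly cleaner packaging of the same bound the paper obtains after dividing by $(s+\alpha+\mu_2)(s+\mu_1+A)$, but the substance is identical.
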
 
	\begin{proof} We take $E_1$ into model (\ref{1.2}) and can get the characteristic equation
		\begin{equation}\label{3.3}
			g_1(s)=g_{11}(s)g_{12}(s)=0,	
		\end{equation}
		where
		$$
		\begin{aligned}
			g_{11}(s)=&s+\mu_5+\eta y_1-\frac{e^{-s\tau_3}cy_1}{h},\\
			g_{12}(s)=&(s+\beta_1 v_1+\beta_2 y_1+\mu_1)(s+\alpha+\mu_2)(s+\mu_4)(s+\mu_3)\\
			&-\alpha\rho e^{-(m_1+s)\tau_1}(s+\mu_1)[(s+\mu_4)\beta_2 x_1 +k\beta_1 x_1]\\
			&-(1-\rho)e^{-(m_2+s)\tau_2}(s+\mu_1)(s+\alpha+\mu_2)[(s+\mu_4)\beta_2 x_1+k\beta_1x_1].
		\end{aligned}
		$$
		Consider the equation
		$$g_{11}(s)=s+\mu_5+\eta y_1-\frac{e^{-s\tau_3}cy_1}{h}=0.$$
		When $\mathcal{R}_1<1$, assuming that $g_{11}(s)=0$ has a positive real root $s_1^ * = Re s_1^* + iIm s_1^*$ with $Res_1^*\ge0$, we can get $s_1^*=\frac{e^{-s\tau_3}cy_1}{h}-\mu_5-\eta y_1$, and we have $1<\left|\frac{s_1^*}{\mu_5+\eta y_1}+1\right|=\left|\frac{e^{-s\tau_3}cy_1}{h(\mu_5+\eta y_1)}\right|\le\mathcal{R}_1$, this contradicts $\mathcal{R}_1<1$. Consequently, the stability of $E_1$ is ascertained by the roots of the following equation
		\begin{equation}\label{3.4}
			g_{12}(s)=0.\end{equation}
		Meanwhile, we can assert that all roots of Eq.(\ref{3.4}) possess negative real portions. Otherwise, it possesses at least one root
		$s_1=Res_1+iIms_1$ with $Res_1\ge 0$. When $s_1$ satisfies $g_{12}(s_1)=0$, we divide both sides of Eq.(\ref{3.4}) by $(s_1+\alpha+\mu_2)(s_1+\beta_1 v_1+\beta_2 y_1+\mu_1)$, resulting in
		$$
		\begin{aligned}
			\left|(s_1+\mu_4)(s_1+\mu_3)\right|=&\left|\frac{\alpha\rho e^{-(m_1+s_1)\tau_1}(s_1+\mu_1)}{(s_1+\alpha+\mu_2)(s_1+\beta_1 v_1+\beta_2 y_1+\mu_1)}[(s_1+\mu_4)\beta_2 x_1 +k\beta_1 x_1]\right.\\
			&+\left.\frac{(1-\rho)e^{-(m_2+s_1)\tau_2}(s_1+\mu_1)}{(s_1+\beta_1 v_1+\beta_2 y_1+\mu_1)}[(s_1+\mu_4)\beta_2 x_1+k\beta_1x_1]\right|\\
			<&\gamma\left|(\beta_2 x_1 \mu_4+k\beta_1 x_1)\right|<\mu_3\mu_4.
		\end{aligned}
		$$
		Nevertheless, it is obvious that $\left|(s_1+\mu_4)(s_1+\mu_3)\right|\ge\mu_3\mu_4$, which gives rise to a contradiction. Hence, all roots of Eq.(\ref{3.3}) possess negative real portions, signifying the local asymptotic stability of $E_1$. Next, if $\mathcal{R}_1>1$, we observe that
		$$g_{11}(0)=\mu_5+\eta y_1-\frac{cy_1}{h}<0,\qquad \lim_{s\to+\infty}g_{11}(s)=+\infty.$$
		There is at least one $w_1\in(0,+\infty)$ such that $g_{11}(w_1)=0$ if $\mathcal{R}_1>1$, which implies
		$E_1$ is unstable. This concludes the verification of the argument.
	\end{proof}
	\begin{thm}\label{thm1}
	If $\mathcal{R}_1 > 1$, the equilibrium $E_2$ becomes locally asymptotically stable for any positive values of $\tau_1$ and $\tau_2$, provided $\tau_3 = 0$.
	\end{thm}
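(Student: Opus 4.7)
The plan is to linearize (\ref{1.2}) about $E_2$ with $\tau_3=0$ and show that every root of the resulting characteristic equation has strictly negative real part. I would first compute the Jacobian and invoke the equilibrium identities at $E_2$ --- in particular $\frac{cy_2}{h+z_2}=\mu_5+\eta y_2$, $ky_2=\mu_4 v_2$, $(\alpha+\mu_2)p_2=\rho e^{-m_1\tau_1}x_2(\beta_1 v_2+\beta_2 y_2)$, and $(\mu_3+az_2)y_2=\alpha p_2+(1-\rho)e^{-m_2\tau_2}x_2(\beta_1 v_2+\beta_2 y_2)$ --- which collapse into the compact relation
\begin{equation*}
\gamma\, x_2(\beta_1 k+\beta_2\mu_4)=(\mu_3+az_2)\mu_4.
\end{equation*}
This is the $E_2$-analogue of $\mathcal{R}_0=1$ with $\mu_3$ inflated by $az_2$. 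The identity $\frac{cy_2}{h+z_2}=\mu_5+\eta y_2$ makes the $(5,5)$ entry of the Jacobian equal to $-(\mu_5+\eta y_2)\frac{z_2}{h+z_2}<0$, while $J_{3,5}=-ay_2$ and $J_{5,3}=\frac{z_2(c-\eta h-\eta z_2)}{h+z_2}>0$ (positivity holds because $z_2\in(0,\tfrac{c-\eta h}{\eta})$ by the construction of $E_2$ in Section~2.3).

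Next, I would expand the characteristic determinant $\Delta(s)=\det(sI-A-B_1 e^{-s\tau_1}-B_2 e^{-s\tau_2})$ along its $z$-column. Since only the $(3,5)$ and $(5,5)$ entries of that column are nonzero, the expansion yields
\begin{equation*}
\Delta(s)=\Bigl(s+(\mu_5+\eta y_2)\tfrac{z_2}{h+z_2}\Bigr) G(s)+a y_2 J_{5,3}\,(s+\mu_4)(s+\mu_1+\beta_1 v_2+\beta_2 y_2)(s+\alpha+\mu_2),
\end{equation*}
where $G(s)$ has the same structural form as $g_{12}(s)$ in the proof of Theorem~3.2 but with $\mu_3$ replaced by $\mu_3+a z_2$ and $x_1$ by $x_2$.

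Then I would argue by contradiction. Assume $\Delta(s_*)=0$ for some $s_*$ with $\mathrm{Re}\,s_*\ge 0$. Applying the triangle inequality, $|e^{-s_*\tau_j}|\le 1$, and $|s_*+c|\ge c$ for $c>0$, the $G$-part gives, via the compact identity above, only the weak bound $\le 1$ (the effective reproduction number associated with the $G$-subsystem equals $1$ at $E_2$). However, the extra term $a y_2 J_{5,3}(s+\mu_4)(s+\mu_1+\beta_1 v_2+\beta_2 y_2)(s+\alpha+\mu_2)$ is strictly positive when $\mathrm{Re}\,s_*\ge 0$, and combined with the strictly negative shift in the $(5,5)$ entry this tightens the estimate to a strict inequality. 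In particular, at $s_*=0$ both summands in the decomposition of $\Delta(0)$ are strictly positive, so $\Delta(0)>0$; for $s_*\ne 0$ on the closed right half-plane, the modulus inequality fails unless both coupling corrections vanish, which they do not since $z_2>0$ and $J_{5,3}>0$. This yields the desired contradiction.

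The main obstacle is the $y$-$z$ cross-coupling absent from the analyses of $E_0$ and $E_1$, where $z$ vanished at the equilibrium. At $E_2$ both $J_{3,5}\ne 0$ and $J_{5,3}\ne 0$, so the characteristic equation no longer factors as a scalar times a $g_{12}$-type polynomial. Fortunately $J_{3,5}J_{5,3}<0$, so $-J_{3,5}J_{5,3}>0$ enters $\Delta(s)$ with a stabilizing sign. Quantifying how this correction, together with the strict negativity of the $(5,5)$ entry, converts the weak $g_{12}$-bound into a strict inequality for every $\tau_1,\tau_2>0$ is the delicate calculation that the proof must carry out.
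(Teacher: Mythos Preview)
Your plan coincides with the paper's proof. The paper writes out the characteristic polynomial $g_2(s)$ at $E_2$, assumes a root $s_2$ with $\mathrm{Re}\,s_2\ge 0$, divides through by $(s_2+\beta_1 v_2+\beta_2 y_2+\mu_1)(s_2+\alpha+\mu_2)$, and then plays an upper bound on the delay side against a lower bound on the non-delay side using exactly the equilibrium identities you list (in particular $\gamma x_2(\beta_1 k+\beta_2\mu_4)=(\mu_3+az_2)\mu_4$); after that division the two surviving pieces are precisely your $(s+A)G(s)$ and $ay_2 J_{5,3}(s+\mu_4)$, since your $A=(\mu_5+\eta y_2)\tfrac{z_2}{h+z_2}$ and $J_{5,3}=\tfrac{z_2(c-\eta h-\eta z_2)}{h+z_2}$ equal the paper's $\mu_5+\eta y_2-\tfrac{cy_2h}{(h+z_2)^2}$ and $\tfrac{cz_2}{h+z_2}-\eta z_2$ respectively.
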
 
	\begin{proof}We take $E_2$ into model (\ref{1.2}) and can get the characteristic equation
		\begin{equation}\label{3.5}
			g_2(s)=0,	
		\end{equation}
		where
		$$
	\begin{aligned}
		g_2(s)&=(s+\beta_1 v_2 +\beta_2 y_2+\mu_1)(s+\alpha+\mu_2)(s+\mu_4)(s+\mu_3+a z_2)\left(s-\frac{cy_2h}{(h+z_2)^2}+\mu_5+\eta y_2\right)\\
		&+(s+\beta_1 v_2 +\beta_2 y_2+\mu_1)(s+\alpha+\mu_2)(s+\mu_4)ay_2\left(\frac{cz_2}{h+z_2}-\eta z_2\right)\\
	\end{aligned}
	$$
		$$
		\begin{aligned}
			&-\alpha\rho e^{-(m_1+s)\tau_1}(s+\mu_1)\left(s-\frac{cy_2h}{(h+z_2)^2}+\mu_5+\eta y_2\right)[k\beta_1 x_2+(s+\mu_4)\beta_2 x_2]\\
			&-1-\rho)e^{-(m_2+s)\tau_2}(s+\mu_1)(s+\alpha+\mu_2)\left(s-\frac{cy_2h}{(h+z_2)^2}+\mu_5+\eta y_2\right)[k\beta_1 x_2+(s+\mu_4)\beta_2 x_2].
		\end{aligned}
		$$
		Based on Eq.(\ref{3.5}), all roots are confirmed to have negative real parts. Otherwise, there must be at least one root $s_2 = \text{Re}\,s_2 + i\,\text{Im}\,s_2$ with $\text{Re}\,s_2 \geq 0$. For $s_2$ such that $g_2(s_2) = 0$, we divide both sides of Eq.(\ref{3.5}) by $(s_2 + \beta_1 v_2 + \beta_2 y_2 + \mu_1)(s_2 + \alpha + \mu_2)$, yielding:
		\begin{equation}\label{3.6}
			\begin{aligned}
				&(s_2+\mu_4)(s_2+\mu_3+a z_2)\left(s_2-\frac{cy_2h}{(h+z_2)^2}+\mu_5+\eta y_2\right)+(s_2+\mu_4)ay_2\left(\frac{cz_2}{h+z_2}-\eta z_2\right)\\
				&= \frac{\alpha\rho e^{-(m_1+s_2)\tau_1}(s_2+\mu_1)}{(s_2+\beta_1 v_2 +\beta_2 y_2+\mu_1)(s_2+\alpha+\mu_2)}\left(s_2-\frac{cy_2h}{(h+z_2)^2}+\mu_5+\eta y_2\right)[k\beta_1 x_2+(s_2+\mu_4)\beta_2 x_2]\\
				&+\frac{(1-\rho)e^{-(m_2+s_2)\tau_2}(s_2+\mu_1)}{(s_2+\beta_1 v_2 +\beta_2 y_2+\mu_1)}\left(s_2-\frac{cy_2h}{(h+z_2)^2}+\mu_5+\eta y_2\right)[k\beta_1 x_2+(s_2+\mu_4)\beta_2 x_2].
		\end{aligned}\end{equation}
		Furthermore, we have 
		$$
		\begin{aligned}
			& \left|\frac{\alpha\rho e^{-(m_1+s_2)\tau_1}(s_2+\mu_1)}{(s_2+\beta_1 v_2 +\beta_2 y_2+\mu_1)(s_2+\alpha+\mu_2)}\left(s_2-\frac{cy_2h}{(h+z_2)^2}+\mu_5+\eta y_2\right)[k\beta_1 x_2+(s_2+\mu_4)\beta_2 x_2]\right.\\
			+ &\left.\frac{(1-\rho)e^{-(m_2+s)\tau_2}(s_2+\mu_1)}{(s_2+\beta_1 v_2 +\beta_2 y_2+\mu_1)}\left(s_2-\frac{cy_2h}{(h+z_2)^2}+\mu_5+\eta y_2\right)[k\beta_1 x_2+(s_2+\mu_4)\beta_2 x_2]\right|\\
			& <\gamma\left|s_2-\frac{cy_2h}{(h+z_2)^2}+\mu_5+\eta y_2\right|[k\beta_1 x_2+(s_2+\mu_4)\beta_2 x_2].
		\end{aligned}
		$$
	Simultaneously, it is derived from system (\ref{2.2}) that
		\begin{equation}\label{3.7}
			\gamma x_2(\beta_1 v_2+\beta_2 y_2)=(\mu_3+az_2)y_2,\qquad ky_2=\mu_4 v_2.
		\end{equation}
		Substituting Eq.(\ref{3.7}) into Eq.(\ref{3.6}), we have
		$$
		\begin{aligned}
			&\left|(s_2+\mu_4)(s_2+\mu_3+a z_2)\left(s_2-\frac{cy_2h}{(h+z_2)^2}+\mu_5+\eta y_2\right)+(s_2+\mu_4)ay_2\left(\frac{cz_2}{h+z_2}-\eta z_2\right)\right|\\
			&>\gamma\left|s_2-\frac{cy_2h}{(h+z_2)^2}+\mu_5+\eta y_2\right|[k\beta_1 x_2+(s_2+\mu_4)\beta_2 x_2].
		\end{aligned}
		$$
	A contradiction is obtained. Hence, under the condition $\mathcal{R}_1 > 1$, each root of Eq.(\ref{3.3}) possesses a negative real part, ensuring the local asymptotic stability of $E_2$.
	\end{proof}
	\subsection{Global asymptotic stability}
	\vskip 0.2cm
	Initially, we define a function $ N(\theta) = \theta - 1 - \ln \theta $ and use the notation $ (x, p, y, v, z) $ to represent $ (x(t), p(t), y(t), v(t), z(t)) $.
	\begin{thm}\label{thm1}
	For all positive values of $\tau_1$, $\tau_2$, and $\tau_3$, when $\mathcal{R}_0<1$, the non-infected equilibrium $E_0 = (x_0, 0, 0, 0, 0)$ of system (\ref{1.2}) attains global stability in an asymptotic sense.
	\end{thm}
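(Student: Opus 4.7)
The plan is to construct a single Lyapunov functional adapted to the two intracellular delays and to apply LaSalle's invariance principle on the positively invariant set $\Omega$ already exhibited in Theorem~2.3. Motivated by standard Volterra--Lyapunov templates for virus models, I would try
\begin{equation*}
L_0 = \Bigl(x - x_0 - x_0 \ln\frac{x}{x_0}\Bigr) + \frac{\alpha}{(\alpha+\mu_2)\gamma}\, p + \frac{1}{\gamma}\, y + \frac{\beta_1 x_0}{\mu_4}\, v + I_1 + I_2,
\end{equation*}
with the delay kernels
\begin{equation*}
I_1 = \frac{\alpha\rho\, e^{-m_1\tau_1}}{(\alpha+\mu_2)\gamma}\int_{t-\tau_1}^{t} x(s)\bigl[\beta_1 v(s)+\beta_2 y(s)\bigr]\,ds,\qquad
I_2 = \frac{(1-\rho)\,e^{-m_2\tau_2}}{\gamma}\int_{t-\tau_2}^{t} x(s)\bigl[\beta_1 v(s)+\beta_2 y(s)\bigr]\,ds.
\end{equation*}
The weights in front of $p$ and $y$ are tuned precisely so that, using (\ref{2.1}), the combination $a_1\rho e^{-m_1\tau_1}+a_2(1-\rho)e^{-m_2\tau_2}$ equals exactly $1$. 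No kernel for $\tau_3$ is needed, because the residual $z$-dependence will turn out to carry a non-positive sign.

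Differentiating $L_0$ along trajectories of (\ref{1.2}) is the next step. By design, $\dot I_1$ and $\dot I_2$ absorb the time-shifted arguments coming from $\dot p$ and $\dot y$, so every remaining quantity is evaluated at time $t$. After substituting $\lambda=\mu_1 x_0$, the $p$-contributions from the $\dot p$ and $\dot y$ pieces cancel; the coefficient $\beta_1 x_0/\mu_4$ cancels the $v$-term against $\dot v=ky-\mu_4 v$; and collecting the $y$-coefficient via the identity $\gamma x_0(k\beta_1+\mu_4\beta_2)/(\mu_3\mu_4)=\mathcal{R}_0$ should collapse the derivative to
\begin{equation*}
\frac{dL_0}{dt} = -\frac{\mu_1(x-x_0)^2}{x} + \frac{\mu_3}{\gamma}(\mathcal{R}_0-1)\,y - \frac{a}{\gamma}\,y z.
\end{equation*}
Under the hypothesis $\mathcal{R}_0<1$, combined with the positivity of solutions (Section~2.1), every term on the right is non-positive, and $\dot L_0=0$ forces $x=x_0$ and $y=0$.

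To finish, I would invoke LaSalle's invariance principle. On the largest invariant subset of $\{\dot L_0=0\}$ one has $x\equiv x_0$ and $y\equiv 0$; the $v$-equation then reduces to $\dot v=-\mu_4 v$, so $v\to 0$; the $z$-equation becomes $\dot z=-\mu_5 z$, because its delayed term also vanishes when $y(t-\tau_3)\equiv 0$, so $z\to 0$; and the $y$-equation finally enforces $\alpha p\equiv 0$, i.e.\ $p\equiv 0$. Hence the largest invariant set reduces to $\{E_0\}$, which yields global asymptotic stability. The principal obstacle I anticipate is the algebraic bookkeeping in the construction of $L_0$: the four constant weights and the two integral kernels must be tuned in concert so that the delayed contributions telescope \emph{and} the residual coefficient of $y$ compresses into the single factor $\mu_3(\mathcal{R}_0-1)/\gamma$; once that balance is achieved, the LaSalle chain $y\Rightarrow v\Rightarrow z\Rightarrow p$ is essentially routine.
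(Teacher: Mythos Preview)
Your proposal is correct and follows the same Lyapunov--LaSalle template as the paper, but your choice of functional is a genuine streamlining. The paper's functional (their $M_0$) carries an extra term $\frac{ah}{c}z$ together with a $\tau_3$-integral $ah\int_0^{\tau_3}\frac{y(t-\theta)z(t-\theta)}{h+z(t-\theta)}\,d\theta$, and uses the $v$-weight $\frac{\mu_3}{k}(1-\mathcal{R}_{02})$; after simplification their derivative concentrates the $(\mathcal{R}_0-1)$ factor on $v$ and produces additional sign-definite pieces $ayz\bigl(\frac{h}{h+z}-1\bigr)-\frac{ah}{c}(\eta yz+\mu_5 z)$. You instead observe that the raw term $-\frac{a}{\gamma}yz$ coming from $\dot y$ is already non-positive, so no $z$-weight or $\tau_3$-kernel is needed, and with the simpler $v$-weight $\beta_1 x_0/\mu_4$ the $(\mathcal{R}_0-1)$ factor lands on $y$. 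Both routes close via the same LaSalle chain; your version avoids having to check that $1-\mathcal{R}_{02}>0$ (which the paper needs for positivity of its $v$-coefficient, though it follows from $\mathcal{R}_0<1$) and dispenses with the $\tau_3$-bookkeeping entirely.
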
 
	\begin{proof} Define $M_0(x,p,y,v,z)$ as follows:
		$$
		\begin{aligned}
			M_0=&\gamma x_0 N\left(\frac{x}{x_0}\right)+\frac{\alpha}{\alpha+\mu_2}p+y+\frac{\mu_3}{k}(1-\mathcal{R}_{02})v+\frac{a h}{c}z\\
			&+\frac{\alpha\rho}{\alpha+\mu_2}e^{-m_1\tau_1}\int_{0}^{\tau_1} (\beta_1 x(t-\theta)v(t-\theta)+\beta_2 x(t-\theta)y(t-\theta))d\theta\\
			&+(1-\rho)e^{-m_2\tau_2}\int_{0}^{\tau_2} (\beta_1 x(t-\theta)v(t-\theta)+\beta_2 x(t-\theta)y(t-\theta))d\theta\\
			&+ah\int_{0}^{\tau_3} \frac{y(t-\theta)z(t-\theta)}{h+z(t-\theta)}d\theta,
		\end{aligned}
		$$
		where $\gamma$ is given by Eq.(\ref{2.1}). It is observed that the function $M_0(x, p, y, v, z)$ is positive for all $x, p, y, v, z > 0$, and $M_0(x_0, 0, 0, 0, 0) = 0$. The derivative $\frac{dM_0}{dt}$ is then calculated along the solutions of system (\ref{1.2}) as follows:
		\begin{equation}\label{3.8}
			\begin{aligned}
				\frac{dM_0}{dt}=&\gamma\left(1-\frac{x_0}{x}\right)(\lambda-\mu_1 x-\beta_1 xv-\beta_2 xy)\\
				&+\frac{\alpha}{\alpha+\mu_2}\left\{\rho e^{-m_1\tau_1}x(t-\tau_1)[\beta_1 v(t-\tau_1)+\beta_2 y(t-\tau_1)]-(\alpha+\mu_2)p\right\}\\
				&+\left\{(1-\rho)e^{-m_2\tau_2} x(t-\tau_2)[\beta_1 v(t-\tau_2)+\beta_2 y(t-\tau_2)]+\alpha p-\mu_3 y-a yz\right\}\\
				&+\frac{\alpha\rho}{\alpha+\mu_2}e^{-m_1\tau_1}\left\{\beta_1 xv+\beta_2 xy-\beta_1 x(t-\tau_1)v(t-\tau_1)-\beta_2 x(t-\tau_1)y(t-\tau_1)\right\}\\
				&+(1-\rho)e^{-m_2\tau_2}\left\{\beta_1 xv+\beta_2 xy-\beta_1 x(t-\tau_2)v(t-\tau_2)-\beta_2 x(t-\tau_2)y(t-\tau_2)\right\}\\
				&+\frac{a h}{c}\left(\frac{cyz}{h+z}-\mu_5 z-\eta yz\right)+\mu_3(1-\mathcal{R}_{02})y-\frac{\mu_3\mu_4}{k}(1-\mathcal{R}_{02})v.
		\end{aligned}\end{equation}
		Eq.(\ref{3.8}) can be simplified as follows:
		$$
		\begin{aligned}
			\frac{dM_0}{dt}=&-\gamma\frac{\mu_1(x-x_0)^2}{x}+\gamma(\beta_1 x_0 v+\beta_2 x_0 y)-\mu_3 y-a yz\\
			&+\frac{a h}{c}\left(\frac{cyz}{h+z}-\mu_5 z-\eta yz\right)+\mu_3(1-\mathcal{R}_{02})y-\frac{\mu_3\mu_4}{k}(1-\mathcal{R}_{02})v
		\end{aligned}
		$$
		$$
		\begin{aligned}
			=&-\gamma\frac{\mu_1(x-x_0)^2}{x}+\left(\gamma\beta_1 x_0-\frac{\mu_3\mu_4}{k}(1-\mathcal{R}_{02})\right)v\\
			&+(\gamma\beta_2 x_0 -\mu_3 \mathcal{R}_{02})y+a yz\left(\frac{h}{h+z}-1-\frac{h\eta}{c}\right)-\frac{ah}{c}\mu_5 z.
		\end{aligned}
		$$
		We have
		$$
		\begin{aligned}
			&\gamma\beta_2 x_0-\mu_3\mathcal{R}_{02}=0,\\
			&\gamma\beta_1 x_0-\frac{\mu_3\mu_4}{k}(1-\mathcal{R}_{02})=\frac{\mu_3\mu_4}{k}(\mathcal{R}_0 -1).
		\end{aligned}
		$$
		Therefore, we obtain
		$$\frac{dM_0}{dt}=-\gamma\frac{\mu_1(x-x_0)^2}{x}+\frac{\mu_3\mu_4}{k}(\mathcal{R}_0 -1)v+ayz\left(\frac{h}{h+z}-1\right)-\frac{ah}{c}(\eta yz+\mu_5 z).$$
		Note that
		$$\frac{h}{h+z(t)}-1\le0,\qquad z(t)\ge 0.$$
		Thus, $\frac{dM_0}{dt}\le 0$ when $\mathcal{R}_0\le 1$ for all $x,p,y,v,z>0.$ Moreover, $\frac{dM_0}{dt}=0$ if and only if $v(t) = 0$, $z(t) = 0$, and $x(t) =x_0$. Let $V_0=\left\{(x,p,y,v,z):\frac{dM_0}{dt}=0\right\}$ and $V'_
		0$ be the largest invariant subset
		of $V_0$. System (\ref{1.2}) solutions trend to $V'_0$.  $v(t) = 0$ is the result for every element in $V'_0$.
		Thus Eq.(\ref{1.2}) yields
		$$v'(t)=0=ky(t).$$
		Given that $y(t) = 0$, from Eq.(\ref{1.2}) it follows that
		$$p'(t)=0=-(\alpha+\mu_2)p(t).$$
		Consequently, $p(t) = 0$, implying that $V'_0$ reduces to the singleton set $\left\{E_0\right\}$. Applying LaSalle’s invariance principle, we determine that $E_0$ is globally asymptotically stable when $\mathcal{R}_0 \leq 1$.
	\end{proof}
	\begin{thm}\label{thm1}
		For the system (\ref{1.2}), given that $\mathcal{R}_1 \leq 1 < \mathcal{R}_0$, it follows that $E_1$ is globally asymptotically stable for all $\tau_1>0$, $\tau_2>0$, $\tau_3>0$.
	\end{thm}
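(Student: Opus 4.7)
The strategy is to extend the Lyapunov--LaSalle argument of Theorem 3.4 to the immune-free equilibrium $E_1=(x_1,p_1,y_1,v_1,0)$. Since $x_1,p_1,y_1,v_1$ are positive while $z_1=0$, the natural construction uses the Volterra function $N(\theta)=\theta-1-\ln\theta$ on the first four components and a linear term in $z$, supplemented by three families of history integrals to absorb the delays $\tau_1,\tau_2,\tau_3$.

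Explicitly, I propose the Lyapunov functional
\begin{align*}
M_1 ={}& \gamma x_1 N\!\left(\frac{x}{x_1}\right) + \frac{\alpha}{\alpha+\mu_2}\,p_1\, N\!\left(\frac{p}{p_1}\right) + y_1\, N\!\left(\frac{y}{y_1}\right) + \frac{\gamma\beta_1 x_1}{\mu_4}\,v_1\, N\!\left(\frac{v}{v_1}\right) + \frac{ah}{c-\eta h}\,z \\
&+ \frac{\alpha\rho e^{-m_1\tau_1}}{\alpha+\mu_2}\,x_1\!\int_0^{\tau_1}\!\!\left[\beta_1 v_1\, N\!\left(\frac{x(t-\theta)v(t-\theta)}{x_1 v_1}\right) + \beta_2 y_1\, N\!\left(\frac{x(t-\theta)y(t-\theta)}{x_1 y_1}\right)\right]d\theta \\
&+ (1-\rho)e^{-m_2\tau_2}\,x_1\!\int_0^{\tau_2}\!\!\left[\beta_1 v_1\, N\!\left(\frac{x(t-\theta)v(t-\theta)}{x_1 v_1}\right) + \beta_2 y_1\, N\!\left(\frac{x(t-\theta)y(t-\theta)}{x_1 y_1}\right)\right]d\theta \\
&+ \frac{ach}{c-\eta h}\!\int_0^{\tau_3}\frac{y(t-\theta)z(t-\theta)}{h+z(t-\theta)}\,d\theta.
\end{align*}
The crucial new ingredient is the coefficient $\tfrac{ah}{c-\eta h}$ on $z$ (which is well-defined because the modelling assumption $c>\eta h$ is in force): paired with the $\tau_3$-integral coefficient $\tfrac{ach}{c-\eta h}$, it makes the combined $z$-contribution to $dM_1/dt$ collapse to $\bigl[ay_1-\tfrac{ah\mu_5}{c-\eta h}\bigr]z-\tfrac{acyz^2}{(c-\eta h)(h+z)}$. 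The leading bracket equals $\tfrac{a}{c-\eta h}\bigl[y_1(c-\eta h)-h\mu_5\bigr]$, which is nonpositive \emph{precisely} when $\mathcal{R}_1\leq 1$. The coefficient $\tfrac{\gamma\beta_1 x_1}{\mu_4}$ on $v_1 N(v/v_1)$ is in turn forced by the equilibrium identities $\mu_3 y_1=\gamma x_1(\beta_1 v_1+\beta_2 y_1)$ and $ky_1=\mu_4 v_1$, ensuring that all linear-in-$v$ and linear-in-$y$ terms cancel.

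Differentiating $M_1$ along system (\ref{1.2}) and substituting the four $E_1$-equilibrium relations (for $\lambda$, $(\alpha+\mu_2)p_1$, $\mu_3 y_1$, and $\mu_4 v_1$), one expects the result to organise as $dM_1/dt = -\gamma\mu_1(x-x_1)^2/x - \sum_i c_i\, N(R_i) + [z\text{-contribution above}]$, where each $c_i>0$ and each $R_i$ is a product of ratios along an infection chain $x\to p\to y\to v$ or $x\to y\to v$ through either delay $\tau_1$ or $\tau_2$, constructed so as to telescope to $1$ at equilibrium; hence every $N(R_i)\geq 0$. The main technical obstacle is the bookkeeping: roughly a dozen cross terms arising from the two infection routes combined with the two intracellular delays must be paired precisely into these telescoping $N$-arguments. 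Once the functional above is fixed this step is mechanical, but it is the source of the proof's length.

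Finally, LaSalle's invariance principle is applied on the largest invariant subset of $\{dM_1/dt=0\}$. Vanishing of the quadratic $(x-x_1)^2$ and of every $N(R_i)$ forces $x=x_1$ and matching of all ratios, hence $p=p_1$, $y=y_1$, $v=v_1$ throughout their histories, while vanishing of the residual $\tfrac{acyz^2}{(c-\eta h)(h+z)}$ combined with $y=y_1>0$ forces $z=0$. The invariant set therefore reduces to $\{E_1\}$, proving that $E_1$ is globally asymptotically stable whenever $\mathcal{R}_1\leq 1<\mathcal{R}_0$.
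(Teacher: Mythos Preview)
Your proposal is correct and follows essentially the same Lyapunov--LaSalle strategy as the paper: Volterra terms $N(\cdot)$ on $x,p,y,v$, a linear term in $z$, and matching history integrals for the three delays, leading to a sum of nonpositive pieces whose vanishing set is $\{E_1\}$. The only difference is your weight on $z$: you use $\tfrac{ah}{c-\eta h}$ where the paper uses $\tfrac{ay_1}{\mu_5}$ (and correspondingly for the $\tau_3$-integral), which yields the slightly cleaner residual $\tfrac{a}{c-\eta h}[y_1(c-\eta h)-h\mu_5]\,z-\tfrac{ac\,yz^2}{(c-\eta h)(h+z)}$ in place of the paper's $\tfrac{ayz}{\mu_5(h+z)}(cy_1-\eta y_1 h-\mu_5 h)-\tfrac{ayz^2(\eta y_1+\mu_5)}{\mu_5(h+z)}$; the two weights coincide exactly at $\mathcal{R}_1=1$ and both give nonpositivity precisely when $\mathcal{R}_1\le 1$.
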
 
	\begin{proof} Let $M_1(x,p,y,v,z)$ be given as follows:
		$$M_1=\gamma x_1N\left(\frac{x}{x_1}\right)+\frac{\alpha}{\alpha+\mu_2}p_1 N\left(\frac{p}{p_1}\right)+y_1N\left(\frac{y}{y_1}\right)+\frac{\gamma\beta_1 x_1 v_1}{ky_1} v_1N\left(\frac{v}{v_1}\right)+\frac{a}{\mu_5}y_1 z $$
		$$
		\begin{aligned}
			&+\frac{\alpha\rho}{\alpha+\mu_2}e^{-m_1\tau_1}\beta_1 x_1 v_1\int_{0}^{\tau_1} N\left(\frac{x(t-\theta)v(t-\theta)}{x_1 v_1}\right)d\theta\\
			&+(1-\rho)e^{-m_2\tau_2}\beta_1 x_1 v_1\int_{0}^{\tau_2}N\left(\frac{x(t-\theta)v(t-\theta)}{x_1 v_1}\right)d\theta\\
			&+\frac{\alpha\rho}{\alpha+\mu_2}e^{-m_1\tau_1}\beta_2 x_1 y_1 \int_{0}^{\tau_1} N\left(\frac{x(t-\theta)y(t-\theta)}{x_1 y_1}\right)d\theta\\
			&+(1-\rho)e^{-m_2\tau_2}\beta_2 x_1 y_1\int_{0}^{\tau_2} N\left(\frac{x(t-\theta)y(t-\theta)}{x_1 y_1}\right)d\theta\\
			&+\frac{ac}{\mu_5}y_1\int_{0}^{\tau_3}\frac{y(t-\theta)z(t-\theta)}{h+z(t-\theta)}d\theta.
		\end{aligned}
		$$
		Given that $M_1(x,p,y,v,z) > 0$ for all positive values of $x, p, y, v, z$, and $M_1(x_1,p_1,y_1,v_1,0) = 0$, we proceed to compute the derivative $\frac{dM_1}{dt}$.
		\begin{equation}\label{3.9}
		\begin{aligned}
			\frac{dM_1}{dt}=&\gamma\left(1-\frac{x_1}{x}\right)(\lambda-\beta_1 xv-\beta_2 xy-\mu_1 x)\\
			&+\frac{\alpha}{\alpha+\mu_2}\left(1-\frac{p_1}{p}\right)\left\{\rho e^{-m_1\tau_1}x(t-\tau_1)[\beta_1 v(t-\tau_1)+\beta_2 y(t-\tau_1)]-\alpha p-\mu_2 p\right\}\\
			&+\left(1-\frac{y_1}{y}\right)\left\{(1-\rho)e^{-m_2\tau_2}x(t-\tau_2)[\beta_1 v(t-\tau_2)+\beta_2 y(t-\tau_2)]+\alpha p-\mu_3 y-ayz\right\}\\
			&+\frac{\gamma\beta_1 x_1 v_1}{ky_1}\left(1-\frac{v_1}{v}\right)(ky-\mu_4v)+\frac{ay_1}{\mu_5}\left(\frac{cyz}{h+z}-\mu_5 z-\eta yz\right)\\
			&+\frac{\alpha\rho}{\alpha+\mu_2}e^{-m_1\tau_1}\beta_1 x_1 v_1\left[\frac{xv}{x_1 v_1}-\frac{x(t-\tau_1)v(t-\tau_1)}{x_1 v_1}-\ln\left(\frac{x(t-\tau_1)v(t-\tau_1)}{x_1 v_1}\right)\right]\\
			&+(1-\rho)e^{-m_2\tau_2}\beta_1 x_1 v_1\left[\frac{xv}{x_1 v_1}-\frac{x(t-\tau_2)v(t-\tau_2)}{x_1 v_1}-\ln\left(\frac{x(t-\tau_2)v(t-\tau_2)}{x_1 v_1}\right)\right]\\
				\end{aligned} 
		\end{equation}
	$$
			\begin{aligned}
				&+\frac{\alpha\rho}{\alpha+\mu_2}e^{-m_1\tau_1}\beta_2 x_1 y_1\left[\frac{xy}{x_1 y_1}-\frac{x(t-\tau_1)y(t-\tau_1)}{x_1 y_1}-\ln\left(\frac{x(t-\tau_1)y(t-\tau_1)}{x_1 y_1}\right)\right]\\
				&+(1-\rho)e^{-m_2\tau_2}\beta_2 x_1 y_1\left[\frac{xy}{x_1 y_1}-\frac{x(t-\tau_2)y(t-\tau_2)}{x_1 y_1}-\ln\left(\frac{x(t-\tau_2)y(t-\tau_2)}{x_1 y_1}\right)\right].
		\end{aligned}$$
		Since $E_1$ is the equilibrium, we have 
		$$\begin{aligned}
			&\lambda=\beta_1 x_1 v_1 +\beta_2 x_1 y_1+\mu_1 x_1,\\
			&\frac{\alpha\rho}{\alpha+\mu_2}e^{-m_1\tau_1}(\beta_1 x_1 v_1 +\beta_2 x_1 y_1 )=\alpha p_1,\\
			&\frac{\alpha\rho}{\alpha+\mu_2}e^{-m_1\tau_1}(\beta_1 x_1 v_1 +\beta_2 x_1 y_1 )+(1-\rho)e^{-m_2\tau_2}(\beta_1 x_1 v_1 +\beta_2 x_1 y_1 )=\mu_3 y_1,\\
			&ky_1=\mu_4 v_1.
		\end{aligned}
		$$
		Then we simplify Eq.(\ref{3.9}) as follows
		$$
		\begin{aligned}
			\frac{dM_1}{dt}=&\gamma\left(1-\frac{x_1}{x}\right)(\mu_1 x_1-\mu_1 x)+\gamma\left(1-\frac{x_1}{x}\right)(\beta_1 x_1 v_1 +\beta_2 x_1 y_1 )\\
			&-\frac{\alpha\rho}{\alpha+\mu_2}e^{-m_1\tau_1}\left(\beta_1 x_1 v_1\frac{x(t-\tau_1)v(t-\tau_1)p_1}{x_1 v_1 p}+\beta_2 x_1 y_1\frac{x(t-\tau_1)y(t-\tau_1)p_1}{x_1 y_1 p}\right)\\
			&+\frac{\alpha\rho}{\alpha+\mu_2}e^{-m_1\tau_1}(\beta_1 x_1 v_1 +\beta_2 x_1 y_1 )-\alpha p\frac{y_1}{y}\\
			&-(1-\rho)e^{-m_2\tau_2}\left(\beta_1 x_1 v_1\frac{x(t-\tau_2)v(t-\tau_2)y_1}{x_1 v_1 y}+\beta_2 x_1 y_1\frac{x(t-\tau_2)y(t-\tau_2)}{x_1 y}\right)\\
			&+\left(\frac{\alpha\rho}{\alpha+\mu_2}e^{-m_1\tau_1}+(1-\rho)e^{-m_2\tau_2}\right)(\beta_1 x_1 v_1 +\beta_2 x_1 y_1 )	\\
			&+\gamma\beta_1 x_1 v_1\left(1-\frac{y v_1}{y_1 v}\right)+\gamma\beta_1 x_1 v_1+\frac{\gamma\beta_1 x_1 v_1 y}{y_1}-\mu_3 y\\
			&-\frac{\alpha\rho}{\alpha+\mu_2}e^{-m_1\tau_1}(\beta_1 x_1 v_1 +\beta_2 x_1 y_1 )\frac{p y_1}{p_1 y}\\
			&+\frac{ayz}{h+z}\left(\frac{cy_1-\eta y_1 h-\mu_5 h}{\mu_5}\right)-\frac{a yz}{(h+z)\mu_5}(\eta y_1 z+\mu_5 z)\\
			&+\frac{\alpha\rho}{\alpha+\mu_2}e^{-m_1\tau_1}\beta_1 x_1 v_1\ln\left(\frac{x(t-\tau_1)v(t-\tau_1)}{xv}\right)\\
			&+(1-\rho)e^{-m_2\tau_2}\beta_1 x_1 v_1\ln\left(\frac{x(t-\tau_2)v(t-\tau_2)}{xv}\right)\\
			&+\frac{\alpha\rho}{\alpha+\mu_2}e^{-m_1\tau_1}\beta_2 x_1 y_1\ln\left(\frac{x(t-\tau_1)y(t-\tau_1)}{xy}\right)\\
			&+(1-\rho)e^{-m_2\tau_2}\beta_2 x_1 y_1\ln\left(\frac{x(t-\tau_2)y(t-\tau_2)}{xy}\right).
		\end{aligned}
		$$
	
	Examine the following equalities using $(i = 1)$:
	$$\ln\left(\frac{x(t-\tau_1)v(t-\tau_1)}{xv}\right)=\ln\left(\frac{x(t-\tau_1)v(t-\tau_1)p_1}{x_i v_i p}\right)+\ln\left(\frac{py_i}{p_i y}\right)+\ln\left(\frac{yv_i}{y_i v}\right)+\ln\left(\frac{x_i}{x}\right),$$
		\begin{equation}\label{3.10}
			\begin{aligned}
				&\ln\left(\frac{x(t-\tau_2)v(t-\tau_2)}{xv}\right)=\ln\left(\frac{x(t-\tau_2)v(t-\tau_2)y_i}{x_i v_i y}\right)+\ln\left(\frac{yv_i}{y_i v}\right)+\ln\left(\frac{x_i}{x}\right),\\
				&\ln\left(\frac{x(t-\tau_1)y(t-\tau_1)}{xy}\right)=\ln\left(\frac{x(t-\tau_1)y(t-\tau_1)p_i}{x_i y_i p}\right)+\ln\left(\frac{py_i}{p_i y}\right)+\ln\left(\frac{x_i}{x}\right),\\
				&\ln\left(\frac{x(t-\tau_2)y(t-\tau_2)}{xy}\right)=\ln\left(\frac{x(t-\tau_2)y(t-\tau_2)}{x_i y}\right)+\ln\left(\frac{x_i}{x}\right),
		\end{aligned}\end{equation}
		we obtain
		$$
		\begin{aligned}
			\frac{dM_1}{dt}=&-\gamma\frac{\mu_1(x-x_1)^2}{x}-\gamma(\beta_1 x_1 v_1+\beta_2 x_1 y_1)N\left(\frac{x_1}{x}\right)\\
			&-\frac{\alpha\rho}{\alpha+\mu_2}e^{-m_1\tau_1}\beta_1 x_1 v_1N\left(\frac{x(t-\tau_1)v(t-\tau_1)p_1}{x_1 v_1 p}\right)\\
			&-(1-\rho)e^{-m_2\tau_2}\beta_1 x_1 v_1 N\left(\frac{x(t-\tau_2)v(t-\tau_2)y_1}{x_1 v_1 y}\right)\\
			&-\frac{\alpha\rho}{\alpha+\mu_2}e^{-m_1\tau_1}\beta_2 x_1 y_1 N\left(\frac{x(t-\tau_1)y(t-\tau_1)p_1}{x_1 y_1 p}\right)\\
			&-(1-\rho)e^{-m_2\tau_2}\beta_2 x_1 y_1 N\left(\frac{x(t-\tau_2)y(t-\tau_2)}{x_1y}\right)\\
			&-\frac{\alpha\rho}{\alpha+\mu_2}e^{-m_1\tau_1}(\beta_1 x_1 v_1 +\beta_2 x_1 y_1 )N\left(\frac{p y_1}{p_1 y}\right)\\
			&-\gamma\beta x_1 v_1 N\left(\frac{y v_1}{y_1 v}\right)-\frac{a yz}{(h+z)\mu_5}(\eta y_1 z+\mu_5 z)\\
			&+\frac{ayz}{h+z}\left(\frac{cy_1-\eta y_1 h-\mu_5 h}{\mu_5}\right).
		\end{aligned}
		$$
		Due to $\mathcal{R}_1=\frac{cy_1}{h(\mu_5+\eta y_1)}\le 1$, $cy_1-\eta y_1 h-\mu_5 h\le0$. Thus, $\frac{dM_1}{dt}\le 0$ and $\frac{dM_1}{dt}=0$ occur at $E_1$. Let $V'_1$ be the largest invariant subset of the set $V_1 =\left\{(x,p,y,v,z):\frac{dM_1}{dt}=0\right\}$. Therefore, the trajectories of system (\ref{1.2}) converge to $V'_1$. Clearly, $V_1$ consists solely of $\left\{E_1\right\}$. By applying LaSalle’s invariance principle, we establish that $E_1$ is globally asymptotically stable under the conditions $\mathcal{R}_1 \le 1$ and $\mathcal{R}_0 > 1$.
	\end{proof}
	\begin{thm}\label{thm1}
Given system (\ref{1.2}), when $\mathcal{R}_1>1$, $E_2$ becomes globally asymptotically stable for any positive $\tau_1$ and $\tau_2$, with $\tau_3=0.$
	\end{thm}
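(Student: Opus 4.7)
The plan is to follow the same Lyapunov--LaSalle template that worked for $E_0$ and $E_1$, constructing a functional $M_2$ centered at $E_2$ and showing that $\dot M_2\le 0$ with equality only at $E_2$. Since $\tau_3=0$, there will be no CTL-delay integral term, which simplifies the bookkeeping considerably; the four $\tau_1,\tau_2$ integrals will be inherited from the $M_1$ construction, just with $E_1$-coordinates replaced by $E_2$-coordinates.

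Concretely, I would set
\[
M_2 = \gamma x_2 N\!\left(\tfrac{x}{x_2}\right)+\tfrac{\alpha}{\alpha+\mu_2}p_2 N\!\left(\tfrac{p}{p_2}\right)+y_2 N\!\left(\tfrac{y}{y_2}\right)+\tfrac{\gamma\beta_1 x_2 v_2}{k y_2}\, v_2 N\!\left(\tfrac{v}{v_2}\right)+\kappa\, z_2 N\!\left(\tfrac{z}{z_2}\right) + I_1+I_2+I_3+I_4,
\]
where $I_1,\dots,I_4$ are the four delay integrals from $M_1$ with $(x_1,v_1,y_1)$ replaced by $(x_2,v_2,y_2)$, and $\kappa>0$ is a constant to be pinned down. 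The coefficient $\kappa$ must be chosen so that the $ayz$ contribution from $y'$ cancels cleanly against the saturation contribution from $z'$; matching the linear--in--$(y-y_2)$ parts at $E_2$ using the equilibrium identity $\tfrac{cy_2}{h+z_2}=\mu_5+\eta y_2$ forces $\kappa=\tfrac{a(h+z_2)}{c}$.

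The computation proceeds in four steps. First, differentiate $M_2$ along trajectories of (\ref{1.2}) and use the $E_2$-equilibrium relations from (\ref{2.2}), namely $\lambda=\mu_1 x_2+\beta_1 x_2 v_2+\beta_2 x_2 y_2$, $\alpha p_2=\tfrac{\alpha\rho}{\alpha+\mu_2}e^{-m_1\tau_1}(\beta_1 x_2 v_2+\beta_2 x_2 y_2)$, $(\mu_3+az_2)y_2=\gamma x_2(\beta_1 v_2+\beta_2 y_2)$, $ky_2=\mu_4 v_2$, and $\tfrac{cy_2}{h+z_2}=\mu_5+\eta y_2$, to eliminate the bare constants. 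Second, use the logarithmic identities of the form (\ref{3.10}) (adapted to index $i=2$) to split each $\ln\bigl(\tfrac{x(t-\tau_j)\cdot}{x\cdot}\bigr)$ into pieces matching the quotients appearing in the linear cancellations. Third, regroup to force the right-hand side into the shape
\[
\dot M_2 = -\gamma\,\tfrac{\mu_1(x-x_2)^2}{x}-\sum_{\ell} c_\ell\, N(\cdot_\ell) + (\text{saturation residual}),
\]
with $c_\ell>0$; here the six sum terms parallel those of the $M_1$ proof. Fourth, apply LaSalle's invariance principle to the set $V'_2\subset\{\dot M_2=0\}$, where $x\equiv x_2$ forces, successively via (\ref{1.2}), $v\equiv v_2$, $y\equiv y_2$, $p\equiv p_2$, $z\equiv z_2$.

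The main obstacle will be the saturation residual. Unlike the $E_1$ case where $z_2=0$ and the CTL term collapsed to $\tfrac{ayz}{\mu_5(h+z)}(cy_1-\eta y_1 h-\mu_5 h)\le 0$, here $z_2>0$ and the term $\kappa z_2(1-z_2/z)\bigl[\tfrac{cyz}{h+z}-\mu_5 z-\eta yz\bigr]$ must be rewritten using $\tfrac{cy}{h+z}-\tfrac{cy_2}{h+z_2}=\tfrac{c(y-y_2)}{h+z}-\tfrac{cy_2(z-z_2)}{(h+z)(h+z_2)}$ so that its linear-in-$(y-y_2)$ piece cancels the residual $a y_2(z-z_2)$ coming from the $y_2 N(y/y_2)$ derivative, while the remaining piece $-\tfrac{\kappa c y_2 z_2(z-z_2)^2}{z(h+z)(h+z_2)}$ is manifestly $\le 0$. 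Verifying that this sign-definite leftover survives after all groupings is the delicate calculation and is precisely where the choice of $\kappa$ and the hypothesis $\mathcal{R}_1>1$ (which guarantees $z_2>0$ and hence the functional is well-defined) become essential.
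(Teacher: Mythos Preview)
Your overall plan is exactly the paper's: a Volterra--type functional $M_2$ built from $N$-blocks at $E_2$ plus the four $\tau_1,\tau_2$ delay integrals, followed by the log-splitting identities (\ref{3.10}) with $i=2$ and LaSalle. The structure you propose, including the identification of the six $N$-terms and the $-\gamma\mu_1(x-x_2)^2/x$ piece, matches the paper line for line.

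However, your value of $\kappa$ is wrong, and with it the saturation residual is \emph{not} sign-definite. Matching the $(y-y_2)(z-z_2)$ cross term at $E_2$ requires $\kappa\bigl(\tfrac{c}{h+z_2}-\eta\bigr)=a$; using $\tfrac{c}{h+z_2}=\tfrac{\mu_5+\eta y_2}{y_2}$ one gets $\tfrac{c}{h+z_2}-\eta=\tfrac{\mu_5}{y_2}$ and hence $\kappa=\tfrac{ay_2}{\mu_5}$, not $\tfrac{a(h+z_2)}{c}=\tfrac{ay_2}{\mu_5+\eta y_2}$. With your $\kappa$ the leftover keeps an indefinite term $-\tfrac{a\eta y_2}{\mu_5+\eta y_2}(y-y_2)(z-z_2)$, so $\dot M_2\le 0$ fails. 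Two related slips: the $y'$ leftover is $-a(y-y_2)(z-z_2)$, not $a y_2(z-z_2)$; and the derivative of $\kappa z_2 N(z/z_2)$ is $\kappa(1-z_2/z)z'$, not $\kappa z_2(1-z_2/z)z'$. With the correct $\kappa=\tfrac{ay_2}{\mu_5}$ (equivalently, the paper's $z$-block $\tfrac{a z_2}{\mu_5}y_2\,N(z/z_2)$), the bracket $-a+\kappa\bigl(\tfrac{c}{h+z}-\eta\bigr)$ acquires a factor $(z-z_2)$, and all $z$-contributions collapse to the single nonpositive term
\[
-\frac{a y(\mu_5+\eta y_2)}{\mu_5(h+z)}(z-z_2)^2,
\]
exactly as in the paper; then your Steps 3--4 go through unchanged.
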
 
	\begin{proof} Consider $M_2(x,p,y,v,z)$:
		$$
		\begin{aligned}
			M_2=&\gamma x_2N\left(\frac{x}{x_2}\right)+\frac{\alpha}{\alpha+\mu_2}p_2 N\left(\frac{p}{p_2}\right)+y_2 N\left(\frac{y}{y_2}\right)+\frac{\gamma\beta_1 x_2 v_2}{ky_2} v_2N\left(\frac{v}{v_2}\right)+\frac{az_2}{\mu_5}y_2 N\left(\frac{z}{z_2}\right)\\
			&+\frac{\alpha\rho}{\alpha+\mu_2}e^{-m_1\tau_1}\beta_1 x_2 v_2\int_{0}^{\tau_1} N\left(\frac{x(t-\theta)v(t-\theta)}{x_2 v_2}\right)d\theta\\
			&+(1-\rho)e^{-m_2\tau_2}\beta_1 x_2 v_2\int_{0}^{\tau_2}N\left(\frac{x(t-\theta)v(t-\theta)}{x_2 v_2}\right)d\theta\\
			&+\frac{\alpha\rho}{\alpha+\mu_2}e^{-m_1\tau_1}\beta_2 x_2 y_2 \int_{0}^{\tau_1} N\left(\frac{x(t-\theta)y(t-\theta)}{x_2 y_2}\right)d\theta\\
		\end{aligned}$$
		$$
		\begin{aligned}
			&+(1-\rho)e^{-m_2\tau_2}\beta_2 x_2 y_2\int_{0}^{\tau_2} N\left(\frac{x(t-\theta)y(t-\theta)}{x_2 y_2}\right)d\theta.
		\end{aligned}$$
		
		We observe that $M_2(x, p, y, v, z) > 0$ for all $x, p, y, v, z > 0$, and $M_2(x, p, y, v, z)$ attains its global minimum at $E_2$. To proceed, we derive $\frac{dM_2}{dt}$ as follows:
		\begin{equation}\label{3.11}
			\begin{aligned}
				\frac{dM_2}{dt}=&\gamma\left(1-\frac{x_2}{x}\right)(\lambda-\beta_1 xv-\beta_2 xy-\mu_1 x)\\
				&+\frac{\alpha}{\alpha+\mu_2}\left(1-\frac{p_2}{p}\right)\left\{\rho e^{-m_1\tau_1}x(t-\tau_1)[\beta_1 v(t-\tau_1)+\beta_2 y(t-\tau_1)]-\alpha p-\mu_2 p\right\}\\
				&+\left(1-\frac{y_2}{y}\right)\left\{(1-\rho)e^{-m_2\tau_2}x(t-\tau_2)[\beta_1 v(t-\tau_2)+\beta_2 y(t-\tau_2)]+\alpha p-\mu_3 y-ayz\right\}\\
				&+\frac{\gamma\beta_1 x_2 v_2}{ky_1}\left(1-\frac{v_2}{v}\right)(ky-\mu_4v)+\frac{ay_2}{\mu_5}\left(1-\frac{z_2}{z}\right)\left(\frac{cyz}{h+z}-\mu_5 z-\eta yz\right)\\
				&+\frac{\alpha\rho}{\alpha+\mu_2}e^{-m_1\tau_1}\beta_1 x_2 v_2\left[\frac{xv}{x_2 v_2}-\frac{x(t-\tau_1)v(t-\tau_1)}{x_2 v_2}-\ln\left(\frac{x(t-\tau_1)v(t-\tau_1)}{x_2 v_2}\right)\right]\\
				&+(1-\rho)e^{-m_2\tau_2}\beta_1 x_2 v_2\left[\frac{xv}{x_2 v_2}-\frac{x(t-\tau_2)v(t-\tau_2)}{x_2 v_2}-\ln\left(\frac{x(t-\tau_2)v(t-\tau_2)}{x_2 v_2}\right)\right]\\
				&+\frac{\alpha\rho}{\alpha+\mu_2}e^{-m_1\tau_1}\beta_2 x_2 y_2\left[\frac{xy}{x_2 y_2}-\frac{x(t-\tau_1)y(t-\tau_1)}{x_2 y_2}-\ln\left(\frac{x(t-\tau_1)y(t-\tau_1)}{x_2 y_2}\right)\right]\\
				&+(1-\rho)e^{-m_2\tau_2}\beta_2 x_2 y_2\left[\frac{xy}{x_2 y_2}-\frac{x(t-\tau_2)y(t-\tau_2)}{x_2 y_2}-\ln\left(\frac{x(t-\tau_2)y(t-\tau_2)}{x_2 y_2}\right)\right].\\
		\end{aligned}\end{equation}
		Note that 
		$$\begin{aligned}
			&\lambda=\beta_1 x_2 v_2 +\beta_2 x_2 y_2+\mu_1 x_2,\\
			&\frac{\alpha\rho}{\alpha+\mu_2}e^{-m_1\tau_1}(\beta_1 x_2 v_2 +\beta_2 x_2 y_2 )=\alpha p_2,\\
			&\frac{\alpha\rho}{\alpha+\mu_2}e^{-m_1\tau_1}(\beta_1 x_2 v_2 +\beta_2 x_2 y_2 )+(1-\rho)e^{-m_2\tau_2}(\beta_1 x_2 v_1 +\beta_2 x_2 y_2 )=\mu_3 y_2+ay_2 z_2,\\
			&ky_2=\mu_4 v_2,\\
			&\mu_5=\frac{cy_2}{h+z_2}-\eta y_2.
		\end{aligned}
		$$
		Simplifying Eq.(\ref{3.11}), we get
		$$
		\begin{aligned}
			\frac{dM_2}{dt}=&\gamma\left(1-\frac{x_2}{x}\right)(\mu_1 x_2-\mu_1 x)+\gamma\left(1-\frac{x_2}{x}\right)(\beta_1 x_2 v_2 +\beta_2 x_2 y_2 )\\
			&-\frac{\alpha\rho}{\alpha+\mu_2}e^{-m_1\tau_1}\left(\beta_1 x_2 v_2\frac{x(t-\tau_1)v(t-\tau_1)p_2}{x_2 v_2 p}+\beta_2 x_2 y_2\frac{x(t-\tau_1)y(t-\tau_1)p_2}{x_2 y_2 p}\right)\\
			&+\frac{\alpha\rho}{\alpha+\mu_2}e^{-m_1\tau_1}(\beta_1 x_2 v_2 +\beta_2 x_2 y_2 )\\
			&-(1-\rho)e^{-m_2\tau_2}\left(\beta_1 x_2 v_2\frac{x(t-\tau_2)v(t-\tau_2)y_2}{x_2 v_2 y}+\beta_2 x_2 y_2\frac{x(t-\tau_2)y(t-\tau_2)}{x_2 y}\right)\\
			&+\left(\frac{\alpha\rho}{\alpha+\mu_2}e^{-m_1\tau_1}+(1-\rho)e^{-m_2\tau_2}\right)(\beta_1 x_2 v_2 +\beta_2 x_2 y_2 )\\
		\end{aligned}$$
		$$
		\begin{aligned}
			&+\gamma\beta_1 x_2 v_2\left(1-\frac{y v_2}{y_2 v}\right)-\frac{\alpha\rho}{\alpha+\mu_2}e^{-m_1\tau_1}(\beta_1 x_2 v_2 +\beta_2 x_2 y_2 )\frac{p y_2}{p_2 y}\\
			&+\frac{ay_2 cy}{\mu_5(h+z)}(z-z_2)+\frac{ay_2 \eta y}{\mu_5}(z_2-z)+ay(z_2-z)\\
			&+\frac{\alpha\rho}{\alpha+\mu_2}e^{-m_1\tau_1}\beta_1 x_2 v_2\ln\left(\frac{x(t-\tau_1)v(t-\tau_1)}{xv}\right)\\
			&+(1-\rho)e^{-m_2\tau_2}\beta_1 x_2 v_2\ln\left(\frac{x(t-\tau_2)v(t-\tau_2)}{xv}\right)\\
			&+\frac{\alpha\rho}{\alpha+\mu_2}e^{-m_1\tau_1}\beta_2 x_2 y_2\ln\left(\frac{x(t-\tau_1)y(t-\tau_1)}{xy}\right)\\
			&+(1-\rho)e^{-m_2\tau_2}\beta_2 x_2 y_2\ln\left(\frac{x(t-\tau_2)y(t-\tau_2)}{xy}\right).
		\end{aligned}$$
		\\
		Applying Eq.(\ref{3.10}) when $(i = 2)$, we obtain
		$$
		\begin{aligned}
			\frac{dM_2}{dt}=&-\gamma\frac{\mu_1(x-x_2)^2}{x}-\gamma(\beta_1 x_2 v_2+\beta_2 x_2 y_2)N\left(\frac{x_2}{x}\right)\\
			&-\frac{\alpha\rho}{\alpha+\mu_2}e^{-m_1\tau_1}(\beta_1 x_2 v_2 +\beta_2 x_2 y_2 )N\left(\frac{p y_2}{p_2 y}\right)\\
			&-\gamma\beta x_2 v_2 N\left(\frac{y v_2}{y_2 v}\right)-\frac{ay}{\mu_5(h+z)}(\mu_5+\eta y_2)(z-z_2)^2\\
			&-\frac{\alpha\rho}{\alpha+\mu_2}e^{-m_1\tau_1}\beta_1 x_2 v_2N\left(\frac{x(t-\tau_1)v(t-\tau_1)p_2}{x_1 v_1 p}\right)\\
			&-(1-\rho)e^{-m_2\tau_2}\beta_1 x_2 v_2 N\left(\frac{x(t-\tau_2)v(t-\tau_2)y_2}{x_2 v_2 y}\right)\\
			&-\frac{\alpha\rho}{\alpha+\mu_2}e^{-m_1\tau_1}\beta_2 x_2 y_2 N\left(\frac{x(t-\tau_1)y(t-\tau_1)p_2}{x_2 y_2 p}\right)\\
			&-(1-\rho)e^{-m_2\tau_2}\beta_2 x_2 y_2 N\left(\frac{x(t-\tau_2)y(t-\tau_2)}{x_2y}\right).
		\end{aligned}
		$$
		
		Since $\mathcal{R}_1 > 1$, then $x_2,p_2,y_2,v_2,z_2 > 0$. We obtain $\frac{dM_2}{dt}\le0$, and then the solutions of
		system (\ref{1.2}) tend to $V'_2$, the largest invariant subset of $V_2 = \left\{(x,p,y,v,z): \frac{dM_2}{dt}=0\right\}$. Clearly, $\frac{dM_2}{dt} = 0$ when $x(t)=x_2,p(t)=p_2,y(t)=y_2,v(t)=v_2,z(t)=z_2$. The global asymptotic stability of $E_2$ is conducted from LaSalle’s invariance principle.
	\end{proof}
	\section{ Hopf bifurcation analysis of $E_2$}
    \subsection{Conditions for stability and Hopf bifurcation}
	Theorem 3.6 establishes that the equilibrium $E_2$ is globally asymptotically stable under the conditions of positive $\tau_1$ and $\tau_2$, with $\tau_3=0$. In contrast, when $\tau_3 > 0$, a Hopf bifurcation emerges at $E_2$. Given the complexity of calculations when $\tau_1 > 0$ and $\tau_2 > 0$, we simplify our analysis by setting $\tau_1 = 0$ and $\tau_2 = 0$ while keeping $\tau_3 > 0$ in the subsequent discussions.
	
	The characteristic equation of the system (\ref{1.2}) at $E_2$
	is
	\begin{equation}\label{4.1}\xi^5+I_1\xi^4+I_2\xi^3+I_3\xi^2+I_4\xi+I_5+(T_1\xi^4+T_2\xi^3+T_3\xi^2+T_4\xi+T_5)e^{-\xi\tau_3}=0,\end{equation}
	where
	
	\begin{align*}
		I_1=&\beta_1 v_2+\beta_2y_2+\mu_1+\alpha+\mu_2+\mu_4+\mu_3+az_2+\mu_5+\eta y_2+\rho\beta_2 x_2-\beta_2 x_2,\\
		I_2=&(\beta_1 v_2+\beta_2 y_2+\mu_1)(\alpha+\mu_2+\mu_4+\mu_3+az_2+\mu_5+\eta y_2)+(\alpha+\mu_2)(\mu_4+\mu_3+a z_2+\mu_5+\eta y_2)\\
		&+\mu_4(\mu_3+a z_2+\mu_5+\eta y_2)+(\mu_3+az_2)(\mu_5+\eta y_2)-ay_2\eta z_2-\alpha\rho\beta_2 x_2-k(1-\rho)\beta_1 x_2\\
		&-(1-\rho)\beta_2 x_2 (\mu_1+\alpha+\mu_2+\mu_5+\eta y_2+ \mu_4),\\
		I_3=&(\beta_1 v_2+\beta_2 y_2+\mu_1)(\alpha+\mu_2)(\mu_4+\mu_3+az_2+\mu_5+\eta y_2)+\mu_4(\alpha+\mu_2)(\mu_3+az_2+\mu_5+\eta y_2)\\
		&+(\beta_1 v_2+\beta_2 y_2+\mu_1)(\mu_3+az_2)(\mu_5+\eta y_2)+\mu_4(\beta_1 v_2 +\beta_2 y_2+\mu_1)(\mu_3+az_2+\mu_5+\eta y_2)\\
		&+(\alpha+\mu_2)(\mu_3+az_2)(\mu_5+\eta y_2)-ay_2\eta z_2(\beta_1 v_2+\beta_2 y_2+\mu_1+\alpha+\mu_2+\mu_4)-k\alpha\rho\beta_1 x_2\\
		&+\mu_4(\mu_3+az_2)(\mu_5+\eta y_2)-\alpha\rho\beta_2 x_2(\mu_1+\mu_5+\eta y_2+\mu_4)-k(1-\rho)\beta_1 x_2(\mu_1+\alpha+\mu_2+\mu_5+\eta y_2)\\
		&-(1-\rho)\beta_2 x_2[\mu_1(\alpha+\mu_2+\mu_5+\eta y_2 +\mu_4)+(\alpha+\mu_2)(\mu_5+\eta y_2+\mu_4)+(\mu_5+\eta y_2)\mu_4],\\
		I_4=&\mu_4(\beta_1 v_2+\beta_2 y_2+\mu_1)(\alpha+\mu_2)(\mu_3+az_2+\mu_5+\eta y_2)+\mu_4(\alpha+\mu_2)(\mu_3+az_2)(\mu_5+\eta y_2)\\
		&+(\beta_1 v_2+\beta_2 y_2+\mu_1)(\alpha+\mu_2)(\mu_3+az_2)(\mu_5+\eta y_2)+\mu_4(\beta_1 v_2+\beta_2 y_2+\mu_1)(\mu_3+az_2)(\mu_5+\eta y_2)\\
		&-ay_2\eta z_2[(\beta_1 v_2 +\beta_2 y_2+\mu_1)(\alpha+\mu_2+\mu_4)+\mu_4(\alpha+\mu_2)]-k\alpha\rho\beta_1 x_2(\mu_1+\mu_5+\eta y_2)\\
		&-(1-\rho)\beta_2 x_2[\mu_1(\alpha+\mu_2)(\mu_5+\eta y_2+\mu_4)+\mu_1\mu_4(\mu_5+\eta y_2)+\mu_4(\alpha+\mu_2)(\mu_5+\eta y_2)]\\
		&-k(1-\rho)\beta_1 x_2[\mu_1(\alpha+\mu_2+\mu_5+\eta y_2)+(\alpha+\mu_2)(\mu_5+\eta y_2)]\\
		&-\alpha\rho\beta_2 x_2[\mu_1(\mu_5+\eta y_2+\mu_4)+\mu_4(\mu_5+\eta y_2)],\\
		I_5=&\mu_4(\beta_1 v_2+\beta_2 y_2+\mu_1)(\alpha+\mu_2)(\mu_3+az_2)(\mu_5+\eta y_2)-ay_2\eta z_2\mu_4(\beta_1 v_2+\beta_2 y_2+\mu_1)(\alpha+\mu_2)\\
		&-\alpha\rho\beta_2 x_2\mu_1\mu_4(\mu_5+\eta y_2)-k\alpha\rho\beta_1 x_2 \mu_1(\mu_5+\eta y_2)-(1-\rho)\beta_2 x_2 \mu_1\mu_4(\alpha+\mu_2)(\mu_5+\eta y_2)\\
		&-k(1-\rho)\beta_1 x_2 \mu_1 (\alpha+\mu_2)(\mu_5+\eta y_2),\\
		T_1=&-\frac{chy_2}{(h+z_2)^2},\\
		T_2=&\frac{acy_2z_2}{h+z_2}+\frac{chy_2}{(h+z_2)^2}[(1-\rho)\beta_2 x_2-(\beta_1 v_2+\beta_2 y_2+\mu_1+\alpha+\mu_2+\mu_4+\mu_3+az_2)],\\
		T_3=&\frac{acy_2z_2}{h+z_2}(\beta_1 v_2+\beta_2 y_2+\mu_1+\alpha+\mu_2+\mu_4)+\frac{chy_2}{(h+z_2)^2}[\alpha\rho\beta_2 x_2+(1-\rho)k\beta_1 x_2]\\
		&-\frac{chy_2}{(h+z_2)^2}[(\beta_1 v_2+\beta_2 y_2+\mu_1)(\alpha+\mu_2+\mu_4+\mu_3+az_2)+(\alpha+\mu_2)(\mu_4+\mu_3+az_2)]\\
		&+\frac{chy_2}{(h+z_2)^2}[(1-\rho)\beta_2 x_2(\mu_1+\alpha+\mu_2+\mu_4)-\mu_4(\mu_3+az_2)],\\
		T_4=&\frac{acy_2z_2}{h+z_2}[(\beta_1 v_2+\beta_2 y_2+\mu_1)(\alpha+\mu_2+\mu_4)+(\alpha+\mu_2)\mu_4]+\frac{chy_2}{(h+z_2)^2}(1-\rho)k\beta_1 x_2(\mu_1+\alpha+\mu_2)\\
		&+\frac{chy_2}{(h+z_2)^2}[\alpha\rho\beta_2 x_2(\mu_1+\mu_4)+\alpha\rho k\beta_1 x_2+(1-\rho)\beta_2 x_2(\mu_1\alpha+\mu_1\mu_2+\mu_1\mu_4+\alpha\mu_4+\mu_2\mu_4)]\\
		&-\frac{chy_2}{(h+z_2)^2}[(\beta_1 v_2+\beta_2 y_2+\mu_1)(\alpha+\mu_2)(\mu_3+\mu_4+az_2)+\mu_4(\beta_1 v_2+\beta_2 y_2+\mu_1)(\mu_3+az_2)]\\
		&-\frac{chy_2}{(h+z_2)^2}\mu_4(\alpha+\mu_2)(\mu_3+az_2),\\
		T_5=&\frac{acy_2z_2}{h+z_2}\mu_4(\beta_1 v_2+\beta_2 y_2+\mu_1)(\alpha+\mu_2)+\frac{chy_2}{(h+z_2)^2}(1-\rho)(\alpha+\mu_2)(\beta_2 x_2 \mu_1\mu_4+k\beta_1\mu_1 x_2)\\
		&+\frac{chy_2}{(h+z_2)^2}[\alpha\rho\mu_1 x_2(\beta_2\mu_4+k\beta_1)-\mu_4(\beta_1 v_2+\beta_2 y_2+\mu_1)(\alpha+\mu_2)(\mu_3+az_2)].
	\end{align*}
	
	Letting $\xi = iw(w > 0)$. Substituting $\xi = iw$ into Eq.(\ref{4.1}) and delineating the real and imaginary components, we derive
	\begin{equation}\label{4.2}
		\left\{
		\begin{aligned}
			&-I_1 w^4+I_3 w^2-I_5=(T_1 w^4-T_3 w^2+T_5)\cos w\tau_3+(-T_2 w^3+T_4 w)\sin w\tau_3, \\
			&w^5-I_2 w^3+I_4 w=(T_1 w^4-T_3 w^2+T_5)\sin w\tau_3-(-T_2 w^3+T_4 w)\cos w\tau_3.
		\end{aligned}
		\right.\end{equation}
The two equations of (\ref{4.2}) are squared and added to derive that
	\begin{equation}\label{4.3}
		w^{10}+A_1 w^8+A_2 w^6+A_3 w^4+A_4 w^2+A_5=0,	
	\end{equation}
	where
	$$\begin{aligned}
		A_1=&I_1^2-2I_2-T_1^2,\\
		A_2=&I_1^2+2I_4-2I_1I_3-T_2^2+2T_1T_3,\\
		A_3=&I_3^2+2I_1I_5-2I_2I_4-2T_1T_5+2T_2T_4-T_3^2,\\
		A_4=&I_4^2-2I_3I_5+2T_3T_5-T_4^2,\\
		A_5=&I_5^2-T_5^2.
	\end{aligned}
	$$
	Let $h = w^2$. Eq.(\ref{4.3}) becomes
	\begin{equation}\label{4.4}
		h^5+A_1 h^4+A_2 h^3+A_3 h^2+A_4 h+A_5=0.\end{equation}
	Let
	\begin{equation}\label{4.5}
		F(h)=h^5+A_1 h^4+A_2 h^3+A_3 h^2+A_4 h+A_5.
	\end{equation}
	The derivative of $F(h)$ is
	\begin{equation}\label{4.6}
		F'(h)=5h^4+4A_1 h^3+3A_2 h^2+2A_3 h+A_4.	
	\end{equation}
	Then let $F'(h)=0$, we can get
	\begin{equation}\label{4.7}
		5h^4+4A_1 h^3+3A_2 h^2+2A_3 h+A_4=0,	
	\end{equation}
	Denote
	$$\begin{aligned}
		&\hat{A}_1=-\frac{6A_1^{2}}{25}+\frac{3A_2}{5},\qquad\hat{A}_2=\frac{8A_1^{3}}{125}-\frac{6A_1 A_2}{25}+\frac{2A_3}{5},\\
		&\hat{A}_3=-\frac{3A_1^{4}}{625}+\frac{3A_1^{2}A_2}{125}-\frac{2A_1A_3}{25}+\frac{A_4}{5},\qquad\Delta_0=\hat{A}_1^{2}-4\hat{A}_3,\\
		&B_1 =-\frac{1}{3}\hat{A}_1^{2}-4\hat{A}_3,\qquad{B}_{2}=-\frac{2}{27}\hat{A}_1^{3}+\frac{8}{3}\hat{A}_1\hat{A}_3-\hat{A}_2^{2},  \\
		&\Delta_{1} =\frac{1}{27}B_1^{3}+\frac{1}{4}B_2^{2},\qquad B_3=\sqrt[3]{-\frac{B_2}{2}+\sqrt{\Delta_{1}}}+\sqrt[3]{-\frac{B_2}{2}-\sqrt{\Delta_{1}}}+\frac{\hat{A}_1}{3},  \\
		&\Delta_2 =-B_3-\hat{A}_1+\frac{2\hat{A}_2}{\sqrt{B_3-\hat{A}_1}},\qquad\Delta_{3}=-B_3-\hat{A}_1-\frac{2\hat{A}_2}{\sqrt{B_3-\hat{A}_1}}.
	\end{aligned}$$	
	Therefore, by applying Lemmas 2.1 to 2.4 from the reference \cite{WOS:000263372000045}, we confirm the presence of positive solutions for equation (\ref{4.4}) and derive the solutions to equation (\ref{4.7}), which we label as $ h_i $ for $ i $ ranging from 1 to 4.
	\newtheorem{lemma}{Lemma}[section]
	
Considering Eq.(\ref{4.4}) with $k$ positive roots (where $ k $ is an integer between 1 and 5), we denote these roots as $l_m $ for $ m = 1,2,\ldots, \kappa $. Consequently, there are also $ k $ positive roots for Eq.(\ref{4.3}), expressed as $ w_{m} = \sqrt{l_{m}} $.
	
	Following the Equation (\ref{4.2}), we have
	\begin{equation}\label{4.8}\begin{aligned}
			\cos w\tau_{3}& =\frac{(I_{1}w^{4}-I_{3}w^{2}+I_{5})(-T_{1}w^{4}+T_{3}w^{2}-T_{5})}{(T_{1}w^{4}-T_{3}w^{2}+T_{5})^{2}+(T_{2}w^{3}-T_{4}w)^{2}}-\frac{(w^{3}-I_{2}w^{3}+I_{4}w)(-T_{2}w^{3}+T_{4}w)}{(T_{1}w^{4}-T_{3}w^{2}+T_{5})^{2}+(T_{2}w^{3}-T_{4}w)^{2}} \\
			&\doteq\phi(w).
	\end{aligned}\end{equation}
	
	By solving for $\tau_3$ from Eq.(\ref{4.8}), we arrive at the expression
	
	$$\tau_{n}^{(m)} = \frac{1}{w_{m}}\left[\arccos(\phi(w)) + 2n\pi\right], \quad \text{for} \quad m = 1, 2, \ldots, \kappa \quad \text{and} \quad n = 0, 1, 2, \ldots.$$
In this context, $\pm w_m$ embody the imaginary parts of the solutions to the characteristic equation Eq.(\ref{4.1}). It is clear that, for any $m = 1,2,\ldots, \kappa$, the sequence $\tau_{n}^{(m)}$ increases monotonically as $n$ ranges from 0 to infinity, and furthermore, $\lim_{{n \to +\infty}} \tau_{n}^{(m)} = +\infty$. Therefore, there exist $m_{0}\in\left\{1,2,3,\ldots,\kappa\right\}$ and $n_{0}\in\left\{0,1,2,3,\ldots\right\}$ such that
	$$\tau_{n_0}^{(m_0)}=\min\biggl\{\tau_n^{(m)}:m=1,\:2,\:3,\:\ldots,\:\kappa;n=0,\:1,\:2,\:\ldots\biggr\}.$$
	Define
	\begin{equation}\label{4.9}
		\tau_{0} = \tau_{n_{0}}^{(m_{0})}, \quad w_{0} = w_{m_{0}}, \quad h_{0} = l_{m_{0}}. \end{equation}
	Adopting the methodology outlined in \cite{WOS:000263372000045}, we formulate the hypotheses
	\begin{equation}
		\begin{aligned}
			&(I_1+T_1)(I_2+T_2)-(I_3+T_3)>0,\\
			&I_1+T_1>0,\quad I_2+T_2>0,\quad I_3+T_3>0,\quad I_4+T_4>0,\quad I_5+T_5>0,\\
			&[(I_1+T_1)(I_2+T_2)-(I_3+T_3)](I_3+T_3)-(I_1+T_1)^2 (I_4+T_4)+(I_1+T_1)(I_5+T_5)>0.
		\end{aligned}\tag{H}\label{eq:H}
	\end{equation}
	Consider the root of the characteristic Eq.(\ref{4.1}), denoted as
	\begin{equation}\label{4.10}
		\xi(\tau_{3}) = \zeta(\tau_{3}) + iw(\tau_{3}),
	\end{equation}
	which fulfills the conditions $\zeta(\tau_{0}) = 0$, $w(\tau_{0}) = w_{0}$, and $h_0 = w_0^2$. Based on these conditions, we can derive the subsequent lemma.	
	\begin{lemma} \label{lemma1}
		Assuming $l_m = w_m^2$ and $F'(l_m) \neq 0$, where $F'(h)$ is as defined in Eq.(\ref{4.6}), we obtain the following equality of signs:
	$$sign\left(\left.\frac{d}{d\tau_{3}}Re(\xi(\tau_3))\right|_{\tau_{3}=\tau_{n}^{(m)}}\right) = sign\left(F'(h)\right).$$
	\end{lemma}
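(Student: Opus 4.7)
The plan is to view the characteristic equation (\ref{4.1}) as $P(\xi)+Q(\xi)e^{-\xi\tau_3}=0$, where $P(\xi)=\xi^5+I_1\xi^4+I_2\xi^3+I_3\xi^2+I_4\xi+I_5$ and $Q(\xi)=T_1\xi^4+T_2\xi^3+T_3\xi^2+T_4\xi+T_5$. Differentiating both sides implicitly with respect to $\tau_3$ and solving for the reciprocal derivative yields
\begin{equation*}
\left(\frac{d\xi}{d\tau_3}\right)^{-1}=-\frac{P'(\xi)}{\xi P(\xi)}+\frac{Q'(\xi)}{\xi Q(\xi)}-\frac{\tau_3}{\xi},
\end{equation*}
after using the relation $Q(\xi)e^{-\xi\tau_3}=-P(\xi)$ on the boundary. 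Since $\mathrm{Re}(1/z)$ has the same sign as $\mathrm{Re}(z)$, the transversality sign I need equals $\mathrm{sign}\bigl(\mathrm{Re}(d\xi/d\tau_3)^{-1}\bigr)$ evaluated at $\xi=iw_m$, $\tau_3=\tau_n^{(m)}$.

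Next I would substitute $\xi=iw_m$ and take the real part. The term $-\tau_3/(iw_m)$ is purely imaginary and drops out. For the remaining two terms, multiplying numerator and denominator by the conjugate of the denominator gives
\begin{equation*}
\mathrm{Re}\!\left(\frac{d\xi}{d\tau_3}\right)^{-1}\!\Bigg|_{\xi=iw_m}\!=\frac{-\mathrm{Im}\bigl(P'(iw_m)\overline{P(iw_m)}\bigr)}{w_m|P(iw_m)|^2}+\frac{\mathrm{Im}\bigl(Q'(iw_m)\overline{Q(iw_m)}\bigr)}{w_m|Q(iw_m)|^2}.
\end{equation*}
On $\tau_3=\tau_n^{(m)}$ the identity $|P(iw_m)|=|Q(iw_m)|$ holds, so the two denominators coalesce and the expression becomes proportional to $\mathrm{Im}\bigl(Q'(iw_m)\overline{Q(iw_m)}\bigr)-\mathrm{Im}\bigl(P'(iw_m)\overline{P(iw_m)}\bigr)$, with a positive prefactor $1/\bigl(w_m|P(iw_m)|^2\bigr)$.

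The crucial algebraic identity, which ties this quantity back to $F$, is the observation that when the coefficients are real,
\begin{equation*}
|P(iw)|^2-|Q(iw)|^2=F(w^2),
\end{equation*}
a fact which follows by splitting $P(iw)$ and $Q(iw)$ into real and imaginary parts and matching coefficients with the definitions of $A_1,\dots,A_5$. Differentiating $|P(iw)|^2=P(iw)\overline{P(iw)}$ with respect to $w$ produces $-2\,\mathrm{Im}\bigl(P'(iw)\overline{P(iw)}\bigr)$, and likewise for $Q$, hence
\begin{equation*}
\mathrm{Im}\bigl(Q'(iw)\overline{Q(iw)}\bigr)-\mathrm{Im}\bigl(P'(iw)\overline{P(iw)}\bigr)=\tfrac{1}{2}\frac{d}{dw}\bigl[|P(iw)|^2-|Q(iw)|^2\bigr]=w\,F'(w^2).
\end{equation*}
Evaluating at $w=w_m$, so that $w_m^2=l_m$, gives
\begin{equation*}
\mathrm{Re}\!\left(\frac{d\xi}{d\tau_3}\right)^{-1}\!\Bigg|_{\tau_3=\tau_n^{(m)}}\!=\frac{F'(l_m)}{|P(iw_m)|^2},
\end{equation*}
and the sign conclusion follows immediately.

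The only genuinely nontrivial step is verifying the identity $|P(iw)|^2-|Q(iw)|^2=F(w^2)$; everything else is standard for characteristic equations with a single delay. I expect this to be the main bookkeeping obstacle, though it is a direct expansion: write $P(iw)=U_P(w)+iV_P(w)$ and $Q(iw)=U_Q(w)+iV_Q(w)$, expand $U_P^2+V_P^2-U_Q^2-V_Q^2$, and check term by term against $A_1,\dots,A_5$ as defined in the excerpt. Once that identity is in place, the proof collapses to the chain of equalities displayed above.
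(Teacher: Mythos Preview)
Your proof is correct and follows essentially the same route as the paper: implicit differentiation of the characteristic equation, passage to $(d\xi/d\tau_3)^{-1}$, evaluation at $\xi=iw_m$, and identification of the resulting numerator with $F'(l_m)$ over the common denominator $|P(iw_m)|^2=|Q(iw_m)|^2$. The only cosmetic difference is that the paper expands the numerator explicitly in the coefficients $I_j,T_j$ and then matches it against $F'(l_m)|_{l_m=w_m^2}$, whereas you obtain the same conclusion more cleanly by differentiating the identity $|P(iw)|^2-|Q(iw)|^2=F(w^2)$; both arguments are standard and equivalent.
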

	\begin{proof}By differentiating Eq.(\ref{4.1}) with respect to $\tau_3$, we derive 
		$$\begin{aligned}
			&(5\xi^{4}+4I_{1}\xi^{3}+3I_{2}\xi^{2}+2I_{3}\xi+I_{4})\frac{d\xi}{d\tau_{3}}+(4T_{1}\xi^{3}+3T_{2}\xi^{2}+2T_{3}\xi+T_{4})e^{-\xi\tau_{3}}\frac{d\xi}{d\tau_{3}} \\
			&-(T_{1}\xi^{4}+T_{2}\xi^{3}+T_{3}\xi^{2}+T_{4}\xi+T_{5})\biggl(\xi+\tau_{3}\frac{d\xi}{d\tau_{3}}\biggr)e^{-\xi\tau_{3}}=0,
		\end{aligned}$$
		this implies that
		$$\begin{aligned}
			\left(\frac{d\xi}{d\tau_{3}}\right)^{-1} =&\frac{(5\xi^{4}+4I_{1}\xi^{3}+3I_{2}\xi^{2}+2I_{3}\xi+I_{4})e^{\xi\:\tau_{3}}}{\xi(T_{1}\xi^{4}+T_{2}\xi^{3}+T_{3}\xi^{2}+T_{4}\xi+T_{5})}  \\
			&+\frac{(4T_{1}\xi^{3}+3T_{2}\xi^{2}+2T_{3}\xi+T_{4})}{\xi(T_{1}\xi^{4}+T_{2}\xi^{3}+T_{3}\xi^{2}+T_{4}\xi+T_{5})}-\frac{\tau_{3}}{\xi}.
		\end{aligned}$$
		Thus,
		$$\begin{aligned}
			sign\biggl(\frac{dRe(\xi(\tau_{3}))}{d\tau_{3}}\biggr)\biggr|_{\tau_{3}=\tau_{n}^{(m)}} =&sign\Bigg(Re\Bigg(\frac{d\xi}{d\tau_{3}}\Bigg)^{-1}\Bigg)\Bigg|_{\xi=iw_{m}} \\
			=&sign\left(\frac{5w_{m}^{8}+(4I_{1}^{2}-8I_{2})w_{m}^{6}+(6I_{4}+3I_{2}-6I_{1}I_{3})w_{m}^{4}}{(-w_{m}^{4}+I_{2}w_{m}^{2}+I_{4})^{2}w_{m}^{2}+(I_{1}w_{m}^{4}-I_{3}w_{m}^{2}+I_{5})^{2}}\right. \\
			&+\frac{(4I_{1}I_{5}-4I_{2}I_{4}+2I_{3}^{2})w_{m}^{2}+(I_{4}^{2}-2I_{3}I_{5})}{(-w_{m}^{4}+I_{2}w_{m}^{2}+I_{4})^{2}w_{m}^{2}+(I_{1}w_{m}^{4}-I_{3}w_{m}^{2}+I_{5})^{2}} \\
			&+\frac{-4T_{1}^{2}w_{m}^{6}+(-3T_{2}^{2}+6T_{1}T_{3})w_{m}^{4}}{(T_{2}w_{m}^{2}-T_{4})^{2}w_{m}^{2}+(T_{1}w_{m}^{4}-T_{3}w_{m}^{2}+T_{5})^{2}} \\
			&+\frac{(4T_{2}T_{4}-4T_{1}T_{5}-2T_{3}^{2})w_{m}^{2}+(-T_{4}^{2}+2T_{3}T_{5})}{(T_{2}w_{m}^{2}-T_{4})^{2}w_{m}^{2}+(T_{1}w_{m}^{4}-T_{3}w_{m}^{2}+T_{5})^{2}}\Bigg).
		\end{aligned}$$
		From Eq.(\ref{4.3}), we can get
		$$\begin{matrix}(-w_m^4+I_2 w_m^2+I_4)^2 w_m^2+(I_1 w_m^4-I_3 w_m^2+I_5)^2=(T_2 w_m^2-T_4)^2 w_m^2+(T_1 w_m^4-T_3 w_m^2+T_5)^2.\end{matrix}$$
		From Eq.(\ref{4.5}), we have
		$$\begin{aligned}F^{\prime}(l_{m})|_{l_{m}=w_{m}^{2}}&=5w_{m}^{8}+4(I_{1}^{2}-2I_{2}-T_{1}^{2})w_{m}^{6}+3(I_{2}^{2}+2I_{4}-2I_{1}I_{3}-T_{2}^{2}+2T_{1}T_{3})w_{m}^{4}\\&+2(I_{3}^{2}+2I_{1}I_{5}-2I_{2}I_{4}-2T_{1}T_{5}+2T_{2}T_{4}-T_{3}^{2})w_{m}^{2}+(I_{4}^{2}-2I_{3}I_{5}+2T_{3}T_{5}-T_{4}^{2}).\end{aligned}$$
		Hence,
		$$sign\biggl(\frac{dRe(\xi(\tau_3))}{d\tau_3}\biggr)\biggr|_{\tau_3=\tau_n^{(m)}}=sign\biggl(\frac{F^{\prime}(l_m)}{(T_2 w_m^2-T_4)^2 w_m^2+(T_1 w_m^4-T_3 w_m^2+T_5)^2}\biggr),$$
	this implies that  $sign\left(\frac{dRe(\xi(\tau_{3}))}{d\tau_{3}}\right)\Bigg|_{\tau_{3}=\tau_{n}^{(m)}}=sign\big(F^{\prime}(h)\big).$
	\end{proof}
	By applying the Hopf bifurcation theorem and Lemmas 2.1 through 2.5 from \cite{WOS:000263372000045}, We derive the subsequent theorem.
	\begin{theorem} \label{lemma1}
		Define $h_0$, $w_0$, and $\tau_0$ as given in Eq.(\ref{4.9}) and Eq.(\ref{4.10}). Assume \eqref{eq:H} holds.
		
		\begin{enumerate}
			\item[(i)] If any of the following conditions is not satisfied:
			
			\begin{enumerate}
				\item[(a)] $A_5 < 0$;
				
				\item[(b)] $A_5 \geq 0$, $\hat{A}_2 = 0$, $\Delta \geq 0$, and $\hat{A}_1 < 0$ or $\hat{A}_3 \leq 0$, and there exists at least one $h^{*} \in \{h_1, h_2, h_3, h_4\}$ such that $h^{*} > 0$ and $F(h^{*}) \leq 0$;
				
				\item[(c)] $A_5 \geq 0$, $\hat{A}_2 \neq 0$, $B_3 > \hat{A}_1$, and $\Delta_{2} \geq 0$ or $\Delta_{3} \geq 0$, and there exists at least one $h^{*} \in \{h_1, h_2, h_3, h_4\}$ such that $h^{*} > 0$ and $F(h^{*}) \leq 0$;
				
				\item[(d)] $A_5 \geq 0$, $\hat{A}_2 \neq 0$, $B_3 < \hat{A}_1$, $\frac{\hat{A}_2^2}{4(\hat{A}_1 - B_3)^2} + \frac{B_3}{2} = 0$, $\bar{h} > 0$, and $F(\bar{h}) \leq 0$, where $\bar{h} = \frac{\hat{A}_2}{2(\hat{A}_1 - B_3)} - \frac{A_1}{3}$, then $E_2$ is locally asymptotically stable for $\tau_3 \geq 0$.
			\end{enumerate}
			
			\item[(ii)] If any of the conditions (a)--(d) from (i) are satisfied, then the immune equilibrium $E_2$ is locally asymptotically stable for $\tau_{3} \in [0, \tau_0)$.
			
			\item[(iii)] If one of the conditions (a)--(d) is met and $F' (h_{0})$ is not zero, the model (\ref{1.2}) experiences a Hopf bifurcation from the immune balance $E_{2}$ as $\tau_3$ crosses the pivotal value $\tau_0$.
		\end{enumerate}
	\end{theorem}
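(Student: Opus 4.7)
The plan is to combine the classical Hopf bifurcation framework for delay differential equations with the root-counting machinery for the auxiliary polynomial $F(h)$. First I would anchor the analysis at $\tau_3=0$: the characteristic equation (\ref{4.1}) collapses to the fifth-degree polynomial with coefficients $I_j+T_j$, and hypothesis \eqref{eq:H} is precisely the Routh--Hurwitz criterion in degree five, so all roots have negative real parts and $E_2$ is locally asymptotically stable at $\tau_3=0$. By continuity of the characteristic roots in $\tau_3$, any subsequent loss of stability must occur through a pair $\xi=\pm iw$ with $w>0$ crossing the imaginary axis; after separating (\ref{4.2}) into real and imaginary parts and squaring-adding, one recovers (\ref{4.3}), equivalently (\ref{4.4}) in $h=w^2$. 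Thus the global question reduces to whether the quintic $F$ defined in (\ref{4.5}) admits positive real roots.

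Next I would invoke Lemmas 2.1--2.4 of \cite{WOS:000263372000045}, which classify the positive real roots of a general quintic in terms of $\Delta_0$, $\Delta_1$ and the auxiliary quantities $\hat{A}_i$, $B_j$, $\Delta_2$, $\Delta_3$. The four conditions (a)--(d) in the statement correspond exactly to the cases in which $F$ possesses at least one positive root. When none of them holds, $F$ has no positive root, so (\ref{4.1}) admits no purely imaginary root for any $\tau_3>0$; together with the anchor step and the continuity argument, this yields part (i). When one of (a)--(d) does hold, the smallest critical delay $\tau_0$ defined in (\ref{4.9}) exists; by continuity of roots no characteristic root has yet crossed the imaginary axis for $\tau_3\in[0,\tau_0)$, which gives part (ii).

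For part (iii) I would apply the transversality lemma proved just above the theorem, whose conclusion
\[
\mathrm{sign}\!\left(\left.\tfrac{d}{d\tau_3}\mathrm{Re}\,\xi(\tau_3)\right|_{\tau_3=\tau_0}\right)=\mathrm{sign}(F'(h_0))
\]
supplies the required transversality as soon as $F'(h_0)\neq 0$. Combined with the simplicity of the crossing root (also ensured by $F'(h_0)\neq 0$), the classical Hopf bifurcation theorem for retarded functional differential equations then produces a branch of periodic orbits emanating from $E_2$ as $\tau_3$ passes through $\tau_0$.

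The main obstacle is the second step: verifying that conditions (a)--(d) genuinely exhaust the positive-root situations of the quintic $F$ requires a careful traversal of the sub-cases of the cubic resolvent encoded in $B_3,\Delta_1,\Delta_2,\Delta_3$ and a sign-by-sign matching with the classification in \cite{WOS:000263372000045}; the algebra is delicate because several of the $A_i$ change sign as the biological parameters vary, and one must keep track of which branch of the resolvent one is on. A secondary subtlety is the selection of $\tau_0$: one must confirm that the value defined in (\ref{4.9}) is indeed the minimum over all $\tau_n^{(m)}$, which follows from monotonicity of each sequence $\{\tau_n^{(m)}\}_n$ together with the finiteness of admissible $m$, but deserves an explicit check before invoking the Hopf theorem.
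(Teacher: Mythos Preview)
Your proposal is correct and follows essentially the same approach as the paper: the paper's own ``proof'' is nothing more than the sentence ``By applying the Hopf bifurcation theorem and Lemmas 2.1 through 2.5 from \cite{WOS:000263372000045}, we derive the subsequent theorem,'' so your outline---anchoring at $\tau_3=0$ via \eqref{eq:H} as Routh--Hurwitz, invoking the cited root-classification lemmas for the quintic $F$, using continuity of characteristic roots for (i)--(ii), and combining Lemma~\ref{lemma1} with the Hopf theorem for (iii)---is in fact more explicit than what the paper itself provides. The only minor discrepancy is that the paper cites Lemmas 2.1--2.5 (not 2.1--2.4) of \cite{WOS:000263372000045}, which presumably absorbs the selection of $\tau_0$ you flagged as a secondary subtlety.
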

\subsection{Direction and stability of Hopf bifurcation}	
To further explore the characteristics of the Hopf bifurcation at $E_2$, we employ the center manifold theory and normal form theory \cite{Hassard1981} to calculate the quantities $\Gamma_1$, $\Gamma_2$, and $T$. The procedure is analogous to that in \cite{Xu2022,bp}. Without loss of generality, suppose $\tau_3 = \tau_0 + \varepsilon$. When $\varepsilon = 0$, the Hopf bifurcation occurs at $E_2$. Define $u_1(t) = x(t) - x_2$, $u_2(t) = p(t) - p_2$, $u_3(t) = y(t) - y_2$, $u_4(t) = v(t) - v_2$, and $u_5(t) = z(t) - z_2$. As a result, system (\ref{1.2}) can be reformulated as a functional differential equation, which is written as
\begin{equation}\label{4.11}
\dot{u}(t) = L_{\varepsilon}(u_t) + F(\varepsilon,u_t),
\end{equation}
where $u(t)=(u_1(t),u_2(t),u_3(t),u_4(t),u_5(t))^{T}\in C^1([-\tau_3,0],\mathbb{R}^5)$, $u_t(l)=u(t+l)$ for $l \in[-\tau_3, 0]$. Note that $L_{\varepsilon} : C([-\tau_3,0],\mathbb{R}^5)\to \mathbb{R}^5$ and $F : \mathbb{R} \times C([-\tau_3,0],\mathbb{R}^5) \to \mathbb{R}^5$. For $\phi\in C([-\tau_3,0],\mathbb{R}^5)$, define
$$
L_{\varepsilon}(\phi) = A_{\max}\phi(0) + B_{\max}\phi(-\tau_3),$$
where
	$$A_{\max}=\begin{pmatrix}
		-(\beta_1 v_2+\beta_2 y_2)-\mu_1 & 0& -\beta_2 x_2 &-\beta_1 x_2 & 0 \\
		\rho(\beta_1 v_2 +\beta_2 y_2) & -\alpha-\mu_2 & \rho\beta_2 x_2& \rho\beta_1 x_2& 0 \\
		(1-\rho)(\beta_1 v_2 +\beta_2 y_2) & \alpha& (1-\rho)\beta_2 x_2-\mu_3-az_2 & (1-\rho)\beta_1 x_2 & -ay_2 \\
		0 & 0 & k & -\mu_4 & 0 \\
		0 & 0 & -\eta z_2 & 0&-\mu_5-\eta y_2
	\end{pmatrix},$$
	$$B_{\max}=\begin{pmatrix}
		0 & 0 & 0 & 0 & 0 \\
		0 & 0 & 0 & 0 & 0 \\
		0 & 0 & 0& 0 & 0 \\
		0 & 0 & 0& 0 & 0 \\
		0 & 0 &\dfrac{c z_2}{h+z_2} & 0 & \dfrac{cy_2 h}{(h+z_2)^2}
	\end{pmatrix},$$
and
$$F(\varepsilon,\phi) = \begin{pmatrix}
-\phi_1(0)[\beta_1 \phi_4(0)+\beta_2\phi_3(0)] \\
\rho\phi_1(0)[\beta_1 \phi_4(0)+\beta_2\phi_3(0)] \\
(1-\rho)\phi_1(0)[\beta_1 \phi_4(0)+\beta_2\phi_3(0)]-a\phi_3(0)\phi_5(0)\\
0 \\
\dfrac{c\phi_3(-\tau_3)\phi_5(-\tau_3)}{h+z_2}+\dfrac{cy_2 z_2}{(h+z_2)^3}{\phi_5}^2(-\tau_3)-\eta\phi_3(0)\phi_5(0)
\end{pmatrix}.$$
By the Riesz representation theorem, there exists a function \(\eta(l, \varepsilon)\) with components of bounded variation for $l \in [-\tau_3, 0]$ such that the linear operator $L_{\varepsilon}$ can be expressed as  
$$
L_{\varepsilon} \phi = \int_{-\tau_3}^0 \mathrm{d}\eta(l,\varepsilon) \, \phi(l).
$$  
We choose$$\eta(l,\varepsilon) =A_{\max} \delta(l) + B_{\max} \delta(l +\tau_3),$$where $\delta(l)$ represents the Dirac function, $\delta(l)=\left\{ \begin{array} {c}{0,l\neq0} \\ {1,l=0} \end{array}.\right. $ 

From now on, we allow functions taking values in $\mathbb{C}^5$ rather than in $\mathbb{R}^5$.   For $\phi\in C^1([-\tau_3,0],\mathbb{C}^5)$, we define$$A(\varepsilon)\phi(l) =\begin{cases} 	\dfrac{\mathrm{d}\phi(l)}{\mathrm{d}l}, & l \in [-\tau_3, 0), \\ 	\int_{-\tau_3}^0 \mathrm{d}\eta(l, \varepsilon) \phi(l) = L_{\varepsilon} \phi, & l = 0, \end{cases}$$and$$ H(\varepsilon)\phi(l) =\begin{cases} 	0, & l \in [-\tau_3, 0), \\ 	F(\varepsilon, \phi), & l = 0. \end{cases}$$So we can rewrite system (\ref{4.11}) as follows: \begin{equation}\label{4.12}\dot{u}_t=A(\varepsilon)u_t+H(\varepsilon)u_t. \end{equation}  For $\varphi \in C^1([0,\tau_3],\mathbb{C}^5) $, define$$A^*\varphi(s) = \begin{cases} -\dfrac{\mathrm{d}\varphi(s)}{\mathrm{d}s}, & s\in (0,\tau_3], \\ \int_{-\tau_3}^0\mathrm{d}\eta^T(s, \varepsilon) \varphi(s), & s=0, \end{cases} $$and a bilinear inner product\begin{equation}\label{4.13}	\langle\varphi,\phi\rangle = \overline{\varphi}^{T}(0)\phi(0) - \int_{l=-\tau_3}^0\int_{\xi=0} ^{l}\overline{\varphi}^{T}(\xi-l)[\mathrm{d}\eta(l)] \phi(\xi)\mathrm{d}\xi,\end{equation}where $\eta(l)=\eta(l,0)$, and $A$ and $A^*$ are the adjoint operators. We have shown that $\pm iw_0$ are the eigenvalues of $A(0)$; obviously $\pm iw_0$ are also the eigenvalues of $A^*(0)$. Then compute the eigenvector $\rho(l)=(1,\rho_2,\rho_3,\rho_4,\rho_5)^{T}e^{iw_0 l}$ and $\rho^*(s)=\overline{T}(1,\rho_2^*,\rho_3^*,\rho_4^*,\rho_5^*)^{T}e^{iw_0 s}$, which are the eigenvectors of $A(0)$ associated with $iw_0$ and $A^*(0)$ associated with $-iw_0$, respectively. Then, we have$$(A_{\max}+B_{\max}e^{-iw_0\tau_0}-iw_0)\rho(0)=0,$$that is$$\begin{cases}	-(\mu_1 + \beta_1 v_2 + \beta_2 y_2 + iw_0) - \beta_2 x_2 \rho_3 - \beta_1 x_2 \rho_4 = 0, \\	\rho(\beta_1 v_2 + \beta_2 y_2) - (\alpha + \mu_2 + iw_0) \rho_2 + \rho\beta_2 x_2 \rho_3 + \rho\beta_1 x_2 \rho_4 = 0, \\	(1-\rho)(\beta_1 v_2 + \beta_2 y_2) + \alpha\rho_2 + [(1-\rho)\beta_2 x_2 - \mu_3 - a z_2 - iw_0] \rho_3 + (1-\rho)\beta_1 x_2 \rho_5 - a y_2 \rho_5 = 0, \\	k\rho_3 - (\mu_4 + iw_0)\rho_4 = 0, \\	\left(-\eta z_2 + \frac{c z_2}{h+z_2}e^{-iw_0\tau_0}\right)\rho_3 + \left(\frac{c y_2 h}{(h+z_2)^2} e^{-iw_0\tau_0} - \mu_5 - iw_0 - \eta y_2\right)\rho_5 = 0.\end{cases}$$Then,$$\begin{aligned}	\rho_2&=\frac{\rho(\beta_1 v_2+\beta_2 y_2)} {\alpha+\mu_2+iw_0}-\frac{[\rho\beta_2 x_2(\mu_4+iw_0)+k\rho\beta_1 x_2]\rho_3}{(\alpha+\mu_2+iw_0)(\mu_4+iw_0)},\\	\rho_3&=-\frac{(\mu_1+\beta_1 v_2 +\beta_2 y_2 +iw_0)(\mu_4+iw_0)} {[\beta_2 x_2(\mu_4+iw_0)+\beta_1 x_2 ]},\\ \rho_4&=\frac{k}{\mu_4+iw_0}\rho_3,\\	\rho_5&=-\frac {[-\eta z_2(h+z_2)^2+cz_2(h+z_2) e^{-iw_0\tau_0}] \rho_3}{[cy_2h e^{-iw_0\tau_0}-(h+z_2)^2 (\mu_5+\eta y_2+iw_0)]}.	\end{aligned}$$
 Similarly, calculate $\rho^{*}(s)$, there is $(A_{\mathrm{max}}^{T}+B_{\mathrm{max}}^{T}e^{iw_0\tau_0}+iw_0)\rho^{*}(0)=0,$ and we can obtain 
$$
	\begin{cases}
		 -(\mu_1+\beta_1v_2+\beta_2 y_2-iw_0)+\rho(\beta_1v_2+\beta_2 y_2)\rho_2^*+(1-\rho)(\beta_1v_2+\beta_2 y_2)\rho_3^*=0, \\
		 [-(\alpha+\mu_2)+iw_0]\rho_2^*+\alpha\rho_3^*=0,\\
		-\beta_2 x_2+\rho \beta_2 x_2\rho_2^*+[(1-\rho)\beta_2 x_2-\mu_3-a z_2+iw_0]\rho_3^*+k\rho_4^*+\left(-\eta z_2+\frac{cz_2}{h+z_2}e^{iw_0\tau_0}\right)\rho_5^*=0,\\
	-\beta_1 x_2+\rho \beta_1 x_2\rho_2^*+(1-\rho)\beta_1 x_2 \rho_3^*+(iw_0-\mu_4)\rho_4^*=0,\\
		-ay_2\rho_3^*+\left(\frac{cy_2 h}{(h+z_2)^2}e^{iw_0\tau_0}+iw_0-\mu_5-\eta y_2\right)\rho_5^*=0,
	\end{cases}
$$
then get
$$\begin{aligned}
	\rho_2^*&=-\frac{\alpha}{[iw_0-(\alpha+\mu_2)]}\rho_3^*,\\
	\rho_3^*&=\frac{(\mu_1+\beta_1v_2+\beta_2 y_2-iw_0)[iw_0-(\alpha+\mu_2)]}{(1-\rho)(\beta_1 v_2+\beta_2 y_2)[iw_0-(\alpha+\mu_2)]-\alpha\rho(\beta_1 v_2+\beta_2 y_2)},\\
	\rho_4^*&=\frac{\beta_1 x_2-\rho \beta_1 x_2\rho_2^*-(1-\rho)\beta_1 x_2 \rho_3^*}{(iw_0-\mu_4)},\\ \rho_5^*&=\frac{ay_2(h+z_2)^2\rho_3^*}{cy_2 he^{iw_0\tau_0}+(iw_0-\mu_5-\eta y_2)(h+z_2)^2}.
\end{aligned}$$ 
According to $\langle \rho^*, \rho \rangle = 1$, $\langle \rho^*, \overline{\rho} \rangle = 0$, and Eq.(\ref{4.13}), define the constant $\overline{T}$,
$$\begin{aligned}
	\langle\rho^{*},\rho\rangle & =\overline{T}\left(1+\rho_{2}\overline{\rho_{2}^{*}}+\rho_{3}\overline{\rho_{3}^{*}}+\rho_{4}\overline{\rho_{4}^{*}}+\rho_{5}\overline{\rho_{5}^{*}}\right) \\
	& -\int_{l=-\tau_0}^{0}\int_{\xi=0}^{l}\overline{T}(1,\overline{\rho_{2}^{*}},\overline{\rho_{3}^{*}},\overline{\rho_{4}^{*}},\overline{\rho_{5}^{*}})e^{-iw_{0}(\xi-l)}[d\eta(l)](1,\rho_{2},\rho_{3},\rho_{4},\rho_{5})^{T}e^{iw_{0}\xi}d\xi \\
	& =\overline{T}\left\{1+\sum_{k=2}^{5}\rho_{k}\overline{\rho_{k}^{*}}+\tau_{0}e^{-iw_{0}\tau_{0}}(1,\overline{\rho_{2}^{*}},\overline{\rho_{3}^{*}},\overline{\rho_{4}^{*}},\overline{\rho_{5}^{*}})B_{\max}(1,\rho_{2},\rho_{3},\rho_{4},\rho_{5})^{T}\right\} \\
	& =\overline{T}\left\{1+\sum_{k=2}^5\rho_k\rho_k^*+\tau_0\rho_5^*\left[\frac{cz_2}{h+z_2}e^{-iw_0\tau_0}\rho_3+\frac{chy_2}{(h+z_2)^2}e^{-iw_0\tau_0}\rho_5\right]\right\}=1,
\end{aligned}$$
and $\overline{T}=\left\{1+\sum_{k=2}^5\rho_k\rho_k^*+\tau_0\rho_5^*\left[\frac{cz_2}{h+z_2}e^{-iw_0\tau_0}\rho_3+\frac{chy_2}{(h+z_2)^2}e^{-iw_0\tau_0}\rho_5\right]\right\}^{-1}$.

Subsequently, we adopt the subsequent notation, if $\varepsilon= 0$, we deem $u_t$ the solution to Eq.(\ref{4.11}) and proceed to determine the coordinates here to describe the central manifold $C_0$, let's denote this as follows
\begin{equation}\label{4.14}
	z(t) = \langle \rho^*, u_t \rangle,\qquad W(t,l) =u_t-z\rho-\bar{z}\bar{\rho}= u_t - 2 \operatorname{Re} \{z(t) \rho(l)\}.
\end{equation}
On $C_0$, there exists $W(t,l) = W(z(t), \overline{z}(t), l)$, and
\begin{equation}\label{4.15}
	W(z(t), \overline{z}(t), l) = W_{20}(l)\frac{z^2}{2} + W_{11}(l)z\overline{z} + W_{02}(l)\frac{\overline{z}^2}{2} +\cdots,
\end{equation}
since $z(t)$ and $\overline{z}(t)$ are local coordinates for center manifold $C_0$ in the directions of $\rho^*$ and $\overline{\rho^*}$, then we can get
$$\begin{aligned} &\dot{z}(t) = \langle \rho^{*}, \dot{u}_{t} \rangle = \langle \rho^{*}, A(0)u_{t} + H(0)u_{t} \rangle, \\ &\qquad = i w_{0}z(t) + \overline{\rho^{*}}^{T}(0) \cdot F(0, W(z(t), \overline{z}(t), 0) + 2 \text{Re}\{z(t)\rho(0)\}), \\ &\qquad \triangleq i w_{0}z(t) + \overline{\rho^{*}}^{T}(0) \cdot f_{0}(z(t), \overline{z}(t)), \end{aligned}$$
where $f_{0}(z,\overline{z})=F(0,W(z,\overline{z},\theta)+z(t)q(\theta)+\overline{z}(t)\overline{q}(\theta)).$ Note that
$$
f_{0}=f_{20}\frac{z^{2}}{2}+f_{11}z\overline{z}+f_{02}\frac{\overline{z}^{2}}{2}+f_{21}\frac{z^{2}\overline{z}}{2}+\cdots,$$
we can rewrite as
$$
\dot{z}(t) = i w_{0}z(t) + g(z(t), \overline{z}(t)),$$
where
\begin{equation}\label{4.16}
	g(z(t), \overline{z}(t)) = \overline{\rho^{*}}^{T}(0) \cdot f_{0}(z(t),\overline{z}(t)) = g_{20}\frac{z^{2}}{2} + g_{11}z\overline{z} + g_{02}\frac{\overline{z}^{2}}{2} + g_{21}\frac{z^{2}\overline{z}}{2} + \cdots.
\end{equation}
From Eqs.(\ref{4.14}) and (\ref{4.15}), it can be obtained that:
$$\begin{aligned}
	&u_t=W(t,l)+2\mathrm{Re}\{z(t)\rho(\iota)\} \\
 &=W_{20}(l)\frac{z^{2}}{2}+W_{11}(l)z\overline{z}+W_{02}(l)\frac{\overline{z}^{2}}{2}+\rho^{T}e^{iw_{0}l}z(t)+\overline{\rho}^{T}e^{-iw_{0}l}\overline{z}(t)+\cdots, 
 \end{aligned}
 $$
 then, from the equation of $F(\varepsilon, \phi)$, we have
 \begin{equation}\label{4.17}
 	\begin{aligned}
 		&g(z(t),\overline{z}(t)) = \overline{\rho^{*}}^{T}(0) \cdot f_{0}(z(t),\overline{z}(t)) \\
 		&= (1,\overline{\rho_2^*},\overline{\rho_3^*},\overline{\rho_4^*},\overline{\rho_5^*})\overline{T}
 		\begin{pmatrix}
 			-\phi_{1}(0)(\beta_1\phi_{4}(0) + \beta_2\phi_{3}(0)) \\
 			\rho\phi_{1}(0)(\beta_1\phi_{4}(0) + \beta_2\phi_{3}(0)) \\
 			(1-\rho)\phi_{1}(0)(\beta_1\phi_{4}(0) + \beta_2\phi_{3}(0)) - a\phi_{3}(0)\phi_{5}(0) \\
 			0 \\
 			\dfrac{c\phi_{3}(-\tau_3)\phi_{5}(-\tau_3)}{h+z_2}+\dfrac{cy_2 z_2}{(h+z_2)^3}{\phi_{5}}^2(-\tau_3)-\eta\phi_{3}(0)\phi_{5}(0)
 		\end{pmatrix},
 	\end{aligned}
 \end{equation}
where
 $$\begin{aligned}
 	& \phi_{1}(0)=z+\overline{z}+W_{20}^{(1)}(0)\frac{z^{2}}{2}+W_{11}^{(1)}(0)z\overline{z}+W_{02}^{(1)}(0)\frac{\overline{z}^{2}}{2}+\cdots, \\
 	& \phi_{3}(0)=\rho_{3}z+\overline{\rho}_{3}\overline{z}+W_{20}^{(3)}(0)\frac{z^{2}}{2}+W_{11}^{(3)}(0)z\overline{z}+W_{02}^{(3)}(0)\frac{\overline{z}^{2}}{2}+\cdots, \\
 	& \phi_{4}(0)=\rho_{4}z+\overline{\rho}_{4}\overline{z}+W_{20}^{(4)}(0)\frac{z^{2}}{2}+W_{11}^{(4)}(0)z\overline{z}+W_{02}^{(4)}(0)\frac{\overline{z}^{2}}{2}+\cdots,\\
 	& \phi_{5}(0)=\rho_{5}z+\overline{\rho}_{5}\overline{z}+W_{20}^{(5)}(0)\frac{z^{2}}{2}+W_{11}^{(5)}(0)z\overline{z}+W_{02}^{(5)}(0)\frac{\overline{z}^{2}}{2}+\cdots,\\
 	& \phi_{3}(-\tau_3)=\rho_{3}z+\overline{\rho}_{3}\overline{z}+W_{20}^{(3)}(-\tau_3)\frac{z^{2}}{2}+W_{11}^{(3)}(-\tau_3)z\overline{z}+W_{02}^{(3)}(-\tau_3)\frac{\overline{z}^{2}}{2}+\cdots, \\
 	& \phi_{5}(-\tau_3)=\rho_{5}z+\overline{\rho}_{5}\overline{z}+W_{20}^{(5)}(-\tau_3)\frac{z^{2}}{2}+W_{11}^{(5)}(-\tau_3)z\overline{z}+W_{02}^{(5)}(-\tau_3)\frac{\overline{z}^{2}}{2}+\cdots,
 \end{aligned}$$
 based on Eqs.(\ref{4.16}) and (\ref{4.17}),
 $$
 \begin{aligned}
 	g(z(t),\overline{z}(t))=&\overline{T}\biggr\{\phi_{1}(0)[\rho\overline{\rho_2^*}+(1-\rho)\overline{\rho_2^*}-1][\beta_1\phi_{4}(0)+\beta_2\phi_{3}(0)]-(a\overline{\rho_3^*}+\eta\overline{\rho_5^*})\phi_{3}(0)\phi_{5}(0)\\&+\frac{c\overline{\rho_5^*}}{h+z_2}\phi_{3}(-\tau_3)\phi_{5}(-\tau_3)+\frac{cy_2 z_2 \overline{\rho_5^*}}{(h+z_2)^3}\phi_5^2(-\tau_3)\biggr\}\\
 \end{aligned}
 $$	
$$
\begin{aligned}
	=&\overline{T}\biggr\{\biggr([-\beta_1+\overline{\rho_2^*}\rho\beta_1+\overline{\rho_3^*}(1-\rho)\beta_1]\rho_4+(-a\overline{\rho_3^*}-\eta\overline{\rho_5^*})\rho_3\rho_5\\
	&+[-\beta_2+\overline{\rho_2^*}\rho\beta_2+\overline{\rho_3^*}(1-\rho)\beta_2]\rho_3+\frac{c\overline{\rho_5^*}}{h+z_2}\rho_3\rho_5+\frac{cy_2 z_2\overline{\rho_5^*}}{(h+z_2)^3}\rho_5^2\biggr)z^{2}\\
	&+\biggr([-\beta_1+\overline{\rho_2^*}\rho\beta_1+\overline{\rho_3^*}(1-\rho)\beta_1](\overline{\rho}_4+\rho_4)+2\frac{cy_2 z_2\overline{\rho_5^*}}{(h+z_2)^3}\rho_5\overline{\rho}_5\\
	&+(\frac{c\overline{\rho_5^*}}{h+z_2}-a\overline{\rho_3^*}-\eta\overline{\rho_5^*})(\overline{\rho}_3\rho_5+\rho_3\overline{\rho}_5)+[-\beta_2+\overline{\rho_2^*}\rho\beta_2+\overline{\rho_3^*}(1-\rho)\beta_2](\rho_3+\overline{\rho}_3)\biggr)z\overline{z}\\
	&+\biggr(-a\overline{\rho_3^*}-\eta\overline{\rho_5^*})\overline{\rho}_3\overline{\rho}_5+\frac{c\overline{\rho_5^*}}{h+z_2}\overline{\rho}_3\overline{\rho}_5+\frac{cy_2 z_2\overline{\rho_5^*}}{(h+z_2)^3}{\overline{\rho}_5}^2\\
	&+[-\beta_1+\overline{\rho_2^*}\rho\beta_1+\overline{\rho_3^*}(1-\rho)\beta_1]\overline{\rho}_4+[-\beta_2+\overline{\rho_2^*}\rho\beta_2+\overline{\rho_3^*}(1-\rho)\beta_2]\overline{\rho}_3\biggr){\overline{z}^2}\\
	&+(-a\overline{\rho_3^*}-\eta\overline{\rho_5^*})(W_{11}^{(5)}(0)\rho_3+W_{11}^{(3)}(0)\rho_5+\frac{\overline{\rho}_{3}}{2}W_{20}^{(5)}(0)+\frac{\overline{\rho}_{5}}{2}W_{20}^{(3)}(0))z^2\overline{z}\\
	&+[-\beta_1+\overline{\rho_2^*}\rho\beta_1+\overline{\rho_3^*}(1-\rho)\beta_1](W_{11}^{(1)}(0)\rho_4+W_{11}^{(4)}(0)+\frac{\overline{\rho}_{4}}{2}W_{20}^{(1)}(0)+\frac{1}{2}W_{20}^{(4)}(0))z^2\overline{z}\\
	&+[-\beta_2+\overline{\rho_2^*}\rho\beta_2+\overline{\rho_3^*}(1-\rho)\beta_2](W_{11}^{(1)}(0)\rho_3+W_{11}^{(3)}(0)+\frac{\overline{\rho}_{3}}{2}W_{20}^{(1)}(0)+\frac{1}{2}W_{20}^{(3)}(0))z^2\overline{z}\\
	&+\frac{c\overline{\rho_5^*}}{h+z_2}(W_{11}^{(5)}(-\tau_3)\rho_3+W_{11}^{(3)}(-\tau_3)\rho_5+\frac{\overline{\rho}_{3}}{2}W_{20}^{(5)}(-\tau_3)+\frac{\overline{\rho}_{5}}{2}W_{20}^{(3)}(-\tau_3))z^2\overline{z}\\
	&+\frac{cy_2 z_2\overline{\rho_5^*}}{(h+z_2)^3}(2W_{11}^{(5)}(-\tau_3)\rho_5+W_{20}^{(5)}(-\tau_3)\overline{\rho}_5)z^2\overline{z}+\cdots\biggr\},
\end{aligned}
$$
where
$$\begin{aligned}
	g_{20}=&2\overline{T}\biggr([-\beta_1+\overline{\rho_2^*}\rho\beta_1+\overline{\rho_3^*}(1-\rho)\beta_1]\rho_4+(-a\overline{\rho_3^*}-\eta\overline{\rho_5^*})\rho_3\rho_5+\frac{c\overline{\rho_5^*}}{h+z_2}\rho_3\rho_5+\frac{cy_2 z_2\overline{\rho_5^*}}{(h+z_2)^3}\rho_5^2\\
	&+[-\beta_2+\overline{\rho_2^*}\rho\beta_2+\overline{\rho_3^*}(1-\rho)\beta_2]\rho_3\biggr),\\
	g_{11}=&\overline{T}\biggr([-\beta_1+\overline{\rho_2^*}\rho\beta_1+\overline{\rho_3^*}(1-\rho)\beta_1](\overline{\rho}_4+\rho_4)+[-\beta_2+\overline{\rho_2^*}\rho\beta_2+\overline{\rho_3^*}(1-\rho)\beta_2](\rho_3+\overline{\rho}_3)\\
	&+(\frac{c\overline{\rho_5^*}}{h+z_2}-a\overline{\rho_3^*}-\eta\overline{\rho_5^*})(\overline{\rho}_3\rho_5+\rho_3\overline{\rho}_5)+2\frac{cy_2 z_2\overline{\rho_5^*}}{(h+z_2)^3}\rho_5\overline{\rho}_5\biggr),\\
	g_{02}=&2\overline{T}\biggr([-\beta_1+\overline{\rho_2^*}\rho\beta_1+\overline{\rho_3^*}(1-\rho)\beta_1]\overline{\rho}_4+[-\beta_2+\overline{\rho_2^*}\rho\beta_2+\overline{\rho_3^*}(1-\rho)\beta_2]\overline{\rho}_3-(a\overline{\rho_3^*}+\eta\overline{\rho_5^*})\overline{\rho}_3\overline{\rho}_5\\
	&+\frac{c\overline{\rho_5^*}}{h+z_2}\overline{\rho}_3\overline{\rho}_5+\frac{cy_2 z_2\overline{\rho_5^*}}{(h+z_2)^3}{\overline{\rho}_5}^2\biggr),\\	
	g_{21}=&\frac{y_2 z_2\overline{\rho_5^*}c\overline{T}}{(h+z_2)^3}(4W_{11}^{(5)}(-\tau_3)\rho_5+2W_{20}^{(5)}(-\tau_3)\overline{\rho}_5)\\
	&+\frac{\overline{\rho_5^*}c\overline{T}}{h+z_2}(2W_{11}^{(5)}(-\tau_3)\rho_3+2W_{11}^{(3)}(-\tau_3)\rho_5+W_{20}^{(5)}(-\tau_3){\overline{\rho}_{3}}+W_{20}^{(3)}(-\tau_3){\overline{\rho}_{5}})\\
	&+\overline{T}(-a\overline{\rho_3^*}-\eta\overline{\rho_5^*})(2W_{11}^{(5)}(0)\rho_3+2W_{11}^{(3)}(0)\rho_5+W_{20}^{(5)}(0)\overline{\rho}_{3}+W_{20}^{(3)}(0)\overline{\rho}_{5})\\
	\end{aligned}
$$
$$\begin{aligned}
&+\overline{T}[-\beta_1+\overline{\rho_2^*}\rho\beta_1+\overline{\rho_3^*}(1-\rho)\beta_1](2W_{11}^{(1)}(0)\rho_4+2W_{11}^{(4)}(0)+W_{20}^{(1)}(0)\overline{\rho}_{4}+W_{20}^{(4)}(0))\\
&+\overline{T}[-\beta_2+\overline{\rho_2^*}\rho\beta_2+\overline{\rho_3^*}(1-\rho)\beta_2](2W_{11}^{(1)}(0)\rho_3+2W_{11}^{(3)}(0)+W_{20}^{(1)}(0)\overline{\rho}_{3}+W_{20}^{(3)}(0)).
\end{aligned}
$$
To calculate the value of $g_{21}$, we need to calculate the value of $W_{20}(l)$ and $W_{11}(l)$. From (\ref{4.12}) and (\ref{4.14}), we have
\begin{equation}\label{4.18}
\dot{W} = \dot{u}_t - \dot{z}\rho -\dot{\bar{z}} \bar{\rho} =
\begin{cases}
	A(0) W - 2 \operatorname{Re} \{\overline{\rho^{*}}(0) \cdot F_{0}(z, \overline{z})\rho(l)\}, & l\in [-\tau_3,0), \\
	A(0) W -2 \operatorname{Re} \{\overline{\rho^{*}}(0) \cdot F_{0}(z, \overline{z})\rho(l)\} + f_0, & l= 0.
\end{cases}
\end{equation}
From (\ref{4.15}), we obtain
\begin{equation}\label{4.19}
	\begin{aligned}
		\dot{W}= \dot{W}_{z}\dot{z}+\dot{W}_{\overline{z}}\dot{\overline{z}}=&(W_{20}(l)z+W_{11}(l)\overline{z}+\cdots)(iw_{0}z(t)+g(z,\overline{z})) \\
		&+(W_{11}(l)z+W_{02}(l)\overline{z}+\cdots)(-iw_{0}\overline{z}(t)+\overline{g}(z,\overline{z})). 
\end{aligned}\end{equation}
Substituting (\ref{4.15}) and (\ref{4.19}) into (\ref{4.18}), and comparing the coefficients of $z^2$ and $z \overline{z}$, we have
\begin{equation}\label{4.20}
	(2i \omega_0 I - A(0)) W_{20}(l) =
\begin{cases}
	- g_{20} \rho(l) - \overline{g}_{02} \overline{\rho}(l), & l\in [-\tau_3,0), \\
	- g_{20} \rho(l) - \overline{g}_{02} \overline{\rho}(l) + f_{20}, & l = 0,
\end{cases}
\end{equation}
and
\begin{equation}\label{4.21}
- A(0) W_{11}(l) =
\begin{cases}
	- g_{11} \rho(l) - \overline{g}_{11}\overline{\rho}(l), & l\in [-\tau_3,0), \\
	- g_{11} \rho(l) - \overline{g}_{11} \overline{\rho}(l) + f_{11}, &l= 0.
\end{cases}
\end{equation}

From the definition of $A(0)$ when $l\in [-\tau_3,0)$, (\ref{4.20}) and (\ref{4.21}), we have
$$\dot{W}_{20} = 2 i w_0 W_{20}(l) + g_{20} \rho(l) + \overline{g}_{02} \overline{\rho}(l),
$$
and
$$\dot{W}_{11} = g_{11} \rho(l) z + \overline{g}_{11} \overline{\rho}(l).
$$
So we can obtain
\begin{equation}\label{4.22}
	W_{20}(l)=\frac{ig_{20}\rho(0)}{w_{0}}e^{iw_{0}l}+\frac{i\overline{g}_{02}\overline{\rho}(0)}{3w_{0}}e^{-iw_{0}l}+H_{1}e^{2iw_{0}l},
\end{equation}
\begin{equation}\label{4.23}
	W_{11}(l)=-\frac{ig_{11}\rho(0)}{w_{0}}e^{iw_{0}l}+\frac{i\overline{g}_{11}\overline{\rho}(0)}{w_{0}}e^{-iw_{0}l}+H_{2},
\end{equation}
where $H_{i}=(H_{i}^{(1)},H_{i}^{(2)},H_{i}^{(3)},H_{i}^{(4)},H_{i}^{(5)})^{T}\in R^{5}$, $i=1,2$ are constant vectors.
Next, we calculate the value of $H_{1}$ and $H_{2}$. From the definition of $A(0)$, when $l=0$ and (\ref{4.20}), we obtain
\begin{equation}\label{4.24}
	{\int_{-\tau_0}^{0}}d\eta(l)W_{20}(l)=2iw_{0}W_{20}(0)+g_{20}\rho(0)+\overline{g}_{02}\overline{\rho}(0)-f_{20}.
\end{equation}
Substituting (\ref{4.22}) into (\ref{4.24}), and note that $iw_{0}I-{\int_{-\tau_0}^{0}}e^{iw_{0}l}d\eta(l)\rho(0)=0$, we have
$$\left(2iw_{0}I-{\int_{-\tau_0}^{0}}e^{2iw_{0}l}d\eta(l)\right)H_{1}=2\left(\begin{array}{c}-(\beta_1\rho_4+\beta_2\rho_3)\\\rho(\beta_1\rho_4+\beta_2\rho_3)\\(1-\rho)(\beta_1\rho_4+\beta_2\rho_3)-a\rho_3\rho_5\\0\\\dfrac{c}{h+z_2}\rho_3\rho_5+\dfrac{cy_2 z_2}{(h+z_2)^3}\rho_5^2-\eta\rho_3\rho_5\end{array}\right),$$
which leads to
	$$
	H_1 = 2{D_1}^{-1}
		\begin{pmatrix}
			-(\beta_1\rho_4+\beta_2\rho_3)\\
			\rho(\beta_1\rho_4+\beta_2\rho_3)\\
			(1-\rho)(\beta_1\rho_4+\beta_2\rho_3)-a\rho_3\rho_5
			\\
			0\\
			\dfrac{c}{h+z_2}\rho_3\rho_5+\dfrac{cy_2 z_2}{(h+z_2)^3}\rho_5^2-\eta\rho_3\rho_5
		\end{pmatrix}.
	$$
The matrix $D_1$ has the following structure:
 $$D_1=\begin{pmatrix}
		2i\omega_0 + D_{11} & 0 & \beta_2 x_2 & \beta_1 x_2 & 0 \\
		-\rho(\beta_1 v_2 + \beta_2 y_2) & 2i\omega_0 + D_{22} & -\rho\beta_2 x_2 & -\rho\beta_1 x_2 & 0 \\
		-(1-\rho)(\beta_1 v_2 + \beta_2 y_2) & -\alpha & 2i\omega_0+D_{33} & -(1-\rho)\beta_1 x_2 & a y_2 \\
		0 & 0 & -k & 2i\omega_0 + D_{44} & 0 \\
		0 & 0 & \eta z_2 - \dfrac{c z_2 e^{-2i\omega_0\tau_0}}{h+z_2} & 0 & 2i\omega_0 +D_{55}
	\end{pmatrix}.$$
where $D_{11}=\mu_1 + \beta_1 v_2 + \beta_2 y_2$, $D_{22}=\alpha + \mu_2$, $D_{33}=- \bigl[(1-\rho)\beta_2 x_2 - \mu_3 - a z_2\bigr]$, $D_{44}=\mu_4$, $D_{55}= \mu_5+\eta y_2- \dfrac{c y_2 h e^{-2i\omega_0\tau_0}}{(h+z_2)^2}$.

Similarly, from the definition of $A(0)$, when $l = 0$ and (\ref{4.21}), we obtain
\begin{equation}\label{4.25}
	\int_{-\tau_0}^{0} d\eta(l) W_{11}(l) = g_{11}\rho(0) + \overline{g}_{11} \overline{\rho}(0) - f_{11}.
\end{equation}
Substituting (\ref{4.23}) into (\ref{4.25}), and note that $-iw_0I - \int_{-\tau_0}^{0} e^{-iw_0l}d\eta(l) \overline{\rho}(0) = 0$, we have
$$
\int_{-\tau_0}^{0} d\eta(l) H_2 = -\begin{pmatrix}
	-[\beta_1(\overline{\rho}_4+\rho_4)+\beta_1(\overline{\rho}_3+\rho_3)]\\
	\rho[\beta_1(\overline{\rho}_4+\rho_4)+\beta_1(\overline{\rho}_3+\rho_3)]\\
	(1-\rho)[\beta_1(\overline{\rho}_4+\rho_4)+\beta_1(\overline{\rho}_3+\rho_3)]-a(\rho_3\overline{\rho}_5+\overline{\rho}_3\rho_5)\\
	0\\
	\dfrac{c}{h+z_2}(\overline{\rho}_3\rho_5+\rho_3\overline{\rho}_5)+2\dfrac{cy_2 z_2}{(h+z_2)^3}\rho_5\overline{\rho}_5-\eta(\rho_3\overline{\rho}_5+\overline{\rho}_3\rho_5)
\end{pmatrix},$$ which leads to
$$
H_2=-
D_2^{-1}
\begin{pmatrix}
	-[\beta_1(\overline{\rho}_4+\rho_4)+\beta_1(\overline{\rho}_3+\rho_3)]\\
	\rho[\beta_1(\overline{\rho}_4+\rho_4)+\beta_1(\overline{\rho}_3+\rho_3)]\\
	(1-\rho)[\beta_1(\overline{\rho}_4+\rho_4)+\beta_1(\overline{\rho}_3+\rho_3)]-a(\rho_3\overline{\rho}_5+\overline{\rho}_3\rho_5)\\
	0\\
	\dfrac{c}{h+z_2}(\overline{\rho}_3\rho_5+\rho_3\overline{\rho}_5)+2\dfrac{cy_2 z_2}{(h+z_2)^3}\rho_5\overline{\rho}_5-\eta(\rho_3\overline{\rho}_5+\overline{\rho}_3\rho_5)
\end{pmatrix},
$$
where
$$
	D_2=\begin{pmatrix}
		D_{11} & 0 & \beta_2 x_2 & \beta_1 x_2 & 0 \\
	-\rho(\beta_1 v_2 + \beta_2 y_2) &  D_{22} & -\rho\beta_2 x_2 & -\rho\beta_1 x_2 & 0 \\
	-(1-\rho)(\beta_1 v_2 + \beta_2 y_2) & -\alpha & D_{33} & -(1-\rho)\beta_1 x_2 & a y_2 \\
	0 & 0 & -k & D_{44} & 0 \\
	0 & 0 & \eta z_2 - \dfrac{c z_2}{h+z_2} & 0 & \mu_5+\eta y_2 - \dfrac{c y_2 h}{(h+z_2)^2}
\end{pmatrix}.$$

Next, we derive the explicit expressions for $\Gamma_1$, $\Gamma_2$, and $T$, which are given by:

$$\begin{aligned}
	& C_{1}(0)=\frac{i}{2w_{0}}\left(g_{20}g_{11}-2|g_{11}|^{2}-\frac{|g_{02}|^{2}}{3}\right)+\frac{g_{21}}{2},\qquad\Gamma_1=-\frac{\mathrm{Re}(C_{1}(0))}{\mathrm{Re}(\xi^{\prime}(\tau_{0}))}, \\
	& \Gamma_2=2\mathrm{Re}(C_{1}(0)),\qquad T=-\frac{\mathrm{Im}\{C_{1}(0)\}+\Gamma_1\mathrm{Im}\left\{\xi^{\prime}(\tau_{0})\right\}}{w_{0}}.
\end{aligned}$$
	\begin{theorem} \label{lemma1}
For system (\ref{1.2}), there holds

\begin{enumerate}
	\item[(i)] If $\Gamma_1>0$ (respectively, $\Gamma_1<0$), the Hopf bifurcation is supercritical (subcritical), and a branch of periodic orbits emerges for $\tau>\tau_0$.
	
	\item[(ii)] If $\Gamma_2<0$ (respectively, $\Gamma_2>0$), the resulting periodic orbits are asymptotically stable (unstable).
	
	\item[(iii)] If $T<0$ (respectively, $T>0$), the period of the bifurcating orbits decreases (increases).
\end{enumerate}
\end{theorem}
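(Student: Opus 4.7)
The plan is to invoke the classical Hopf bifurcation formulae of Hassard, Kazarinoff and Wan directly, since all the preparatory data have already been assembled in the preceding reduction. The system has been rewritten as $\dot u(t)=L_\varepsilon(u_t)+F(\varepsilon,u_t)$, the center manifold $C_0$ has been parametrized by $(z,\bar z)$ via $u_t=W(z,\bar z,l)+2\operatorname{Re}\{z(t)\rho(l)\}$, and the flow on $C_0$ has been reduced to the scalar normal form
\begin{equation*}
\dot z(t)=iw_0z(t)+g(z,\bar z),\qquad g(z,\bar z)=g_{20}\frac{z^2}{2}+g_{11}z\bar z+g_{02}\frac{\bar z^2}{2}+g_{21}\frac{z^2\bar z}{2}+\cdots.
\end{equation*}
The coefficients $g_{20},g_{11},g_{02}$ have been read off directly from $F(0,\phi)$, while $g_{21}$ requires the quadratic Taylor coefficients $W_{20}(l)$ and $W_{11}(l)$ of the center manifold. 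These have been expressed in closed form via (\ref{4.22})--(\ref{4.23}) together with the linear systems $D_1 H_1=\cdots$ and $D_2 H_2=\cdots$, so $g_{21}$ is effectively an explicit quantity.

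With $g_{20},g_{11},g_{02},g_{21}$ in hand, the first step is to form the Poincar\'e--Lyapunov coefficient
\begin{equation*}
C_1(0)=\frac{i}{2w_0}\left(g_{20}g_{11}-2|g_{11}|^2-\frac{|g_{02}|^2}{3}\right)+\frac{g_{21}}{2},
\end{equation*}
which is the cubic coefficient of the universal unfolding of an Andronov--Hopf singularity. The second step is to verify the transversality condition: by Lemma~\ref{lemma1}, $\operatorname{sign}\bigl(\frac{d}{d\tau_3}\operatorname{Re}\xi(\tau_3)\bigr)\big|_{\tau_3=\tau_0}=\operatorname{sign}(F'(h_0))$, and the hypothesis $F'(h_0)\neq 0$ from Theorem~\ref{lemma1}(iii) ensures $\operatorname{Re}(\xi'(\tau_0))\neq 0$. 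This is precisely the non-degeneracy needed to apply the Hopf bifurcation theorem.

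With the transversality condition secured, the third step is to quote the Hassard--Kazarinoff--Wan formulae, which give the direction of the bifurcation, the Floquet exponent of the bifurcating periodic orbit, and the leading-order correction to its period as
\begin{equation*}
\Gamma_1=-\frac{\operatorname{Re}(C_1(0))}{\operatorname{Re}(\xi'(\tau_0))},\qquad \Gamma_2=2\operatorname{Re}(C_1(0)),\qquad T=-\frac{\operatorname{Im}(C_1(0))+\Gamma_1\operatorname{Im}(\xi'(\tau_0))}{w_0}.
\end{equation*}
From these formulae, parts (i)--(iii) of the theorem follow tautologically: the sign of $\Gamma_1$ determines whether the periodic orbits appear for $\tau_3>\tau_0$ (supercritical, $\Gamma_1>0$) or $\tau_3<\tau_0$ (subcritical, $\Gamma_1<0$); the sign of $\Gamma_2$ is the sign of the nontrivial Floquet exponent and hence determines orbital stability ($\Gamma_2<0$ stable); and the sign of $T$ controls whether the period expands or contracts as the bifurcation parameter moves away from $\tau_0$.

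The main obstacle, conceptually, is not the invocation of the theorem but the sheer bookkeeping: one must keep track of the five components of $\rho,\rho^*,W_{20},W_{11}$ and the two $5\times 5$ matrix inversions $D_1^{-1},D_2^{-1}$ when substituting into $g_{21}$. Because these computations are purely algebraic and the matrices $D_1,D_2$ are invertible under the standing non-resonance assumptions ($2iw_0$ and $0$ are not roots of (\ref{4.1})), the proof reduces to the mechanical substitution outlined above, and the conclusion of the theorem follows at once from the Hopf bifurcation theorem applied to the reduced scalar equation on $C_0$.
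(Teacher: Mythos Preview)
Your proposal is correct and follows essentially the same approach as the paper: after the center-manifold reduction and computation of $g_{20},g_{11},g_{02},g_{21}$ carried out in Section~4.2, the paper simply records the Hassard--Kazarinoff--Wan expressions for $C_1(0),\Gamma_1,\Gamma_2,T$ and states the theorem as an immediate consequence of \cite{Hassard1981}, without a separate proof. Your write-up makes explicit the role of the transversality condition $\operatorname{Re}(\xi'(\tau_0))\neq 0$ (via $F'(h_0)\neq 0$) that the paper leaves implicit, but otherwise the logic is identical.
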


	\section{Numerical simulation}
	
This study component employed computer simulations to assess findings and explore the impact of the CTL immune response on the system, utilizing the DDE23 Matlab Package for delay differential equations. The following are the basic parameters:
	$$\beta_1=0.001, \beta_2=0.001, \rho=0.4, m_1=4, m_2=4, \alpha=0.05, a=0.2,$$
	$$k=0.02, h=0.01, \mu_1=0.01,\mu_2=0.02,\mu_3=0.04,\mu_4=0.02,\mu_5=0.005,$$
	and the initial condition is chosen as $(x^0, p^0, y^0, v^0, z^0) = (75, 1, 1, 1, 0.5)$.
	
	First, we select $\lambda=7.5$, $c=0.05$, $\eta=0.003$, $\tau_1=\tau_2=1$, and $\tau_3=2$. This yields the equilibrium point $E_0 = (750, 0, 0, 0, 0)$ and the basic reproduction number $\mathcal{R}_0 = 0.2920 < 1$. According to Theorem 3.4, $E_0$ is globally asymptotically stable. As illustrated in Figure 3, the curve $x(t)$ representing uninfected cells converges to 750, while the other populations tend to zero, confirming the stability of $E_0$ and validating Theorem 3.4. In this scenario, CTL activation eliminates both infected cells and viral particles.
	
Subsequently, the values $\lambda = 75$, $\eta = 0.003$, and $c = 0.005$ are selected, yielding the equilibrium point $E_1 = (2568.3, 4.5164, 9.6011, 9.6011, 0)$, with $\mathcal{R}_0 = 2.9202 > 1$ and $\mathcal{R}_1 = 0.1623 < 1$. Based on Theorem 3.5, the equilibrium $E_1$ exhibits global asymptotic stability, which implies that the CTL immune response is insufficient to clear infected cells, leading to its eventual inactivation. Consequently, a stable equilibrium is reached where uninfected cells, cells in latent infection, cells in active infection, and viral particles coexist in a balanced manner (see Figure 4).
	
	To confirm the validity of Theorem 3.6, we set $\lambda=7.5$, $c=0.005$, $\eta=0.003$, $\tau_1=\tau_2=0.25$, and $\tau_3=0$. This yields $\mathcal{R}_1 = 5.8654 > 1$. According to Theorem 5.3, $E_2 = (708.6670, 0.7603, 0.2916, 0.2916, 0.4864)$ is globally asymptotically stable. Figure 5 illustrates the triggering of the immune response, enabling a stable balance among uninfected, latent infected, infected cells, viruses, and CTLs.
	
	As a final point, we investigate how $\tau_3$ affects the equilibrium \(E_2\) by examining the following two cases: (1) when $\tau_1$ and $\tau_2$ are both zero, while $\tau_3$ is non-zero; (2) when $\tau_1$, $\tau_2$, and $\tau_3$ are all non-zero. In the second scenario, with $\lambda=3$, $\eta=0.003$, and $c=0.01$, we obtain $\mathcal{R}_0 = 6.3776 > 1$, $\mathcal{R}_1 = 2.3033 > 1$, and $E_2 = (266.4163, 1.6792, 0.6303, 0.6303, 0.9047)$. For $\tau_3 = 70$, Figure 6 illustrates the stability of the immunity-activated equilibrium, denoted as $E_2$. Increasing $\tau_3$ to 100, Figure 7 confirms its stability. However, further increasing $\tau_3$ to 101, Figure 8 reveals that $E_2$ experiences a loss of stability, resulting in a Hopf bifurcation at $E_2$, giving rise to a periodic solution. For $\tau_3 = 120$, numerical simulations reveal intricate periodic oscillations inside the system (see Figure 9).
\begin{figure}[htbp] 
	\centering
	\begin{subfigure}{0.3\textwidth}
		\centering
		\includegraphics[width=4.2cm,height=3.15cm]{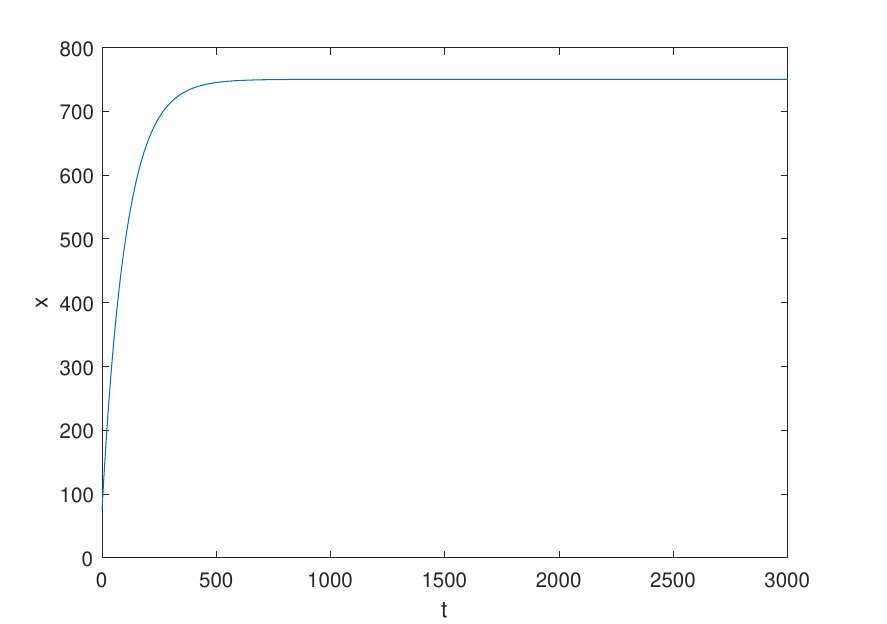}
		\caption{Temporal series of x(t)}
		\label{fig:image1}
	\end{subfigure}%
	\hfill
	\begin{subfigure}{0.3\textwidth}
		\centering
		\includegraphics[width=4.2cm,height=3.15cm]{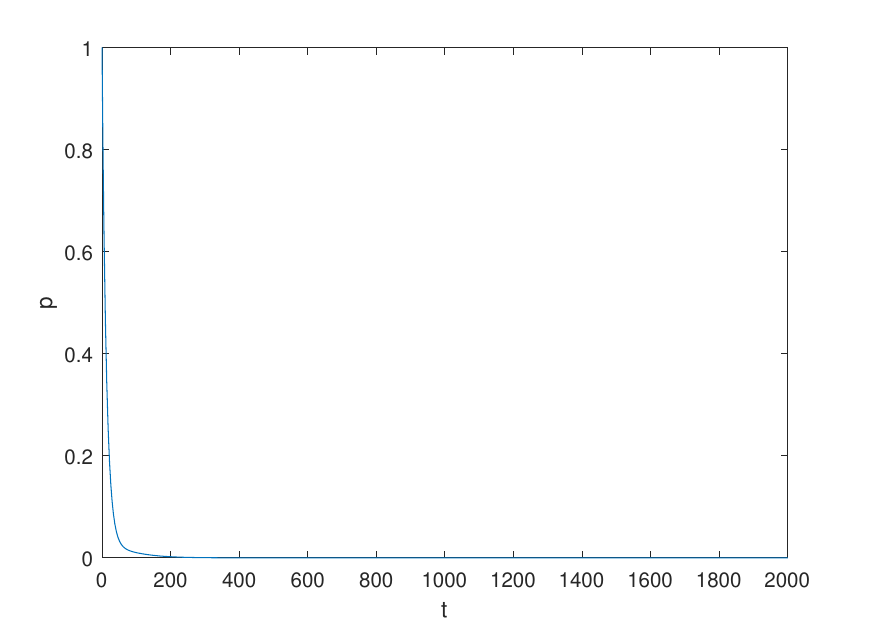}
		\caption{Temporal series of p(t)}
		\label{fig:image2}
	\end{subfigure}%
	\hfill
	\begin{subfigure}{0.3\textwidth}
		\centering
		\includegraphics[width=4.2cm,height=3.15cm]{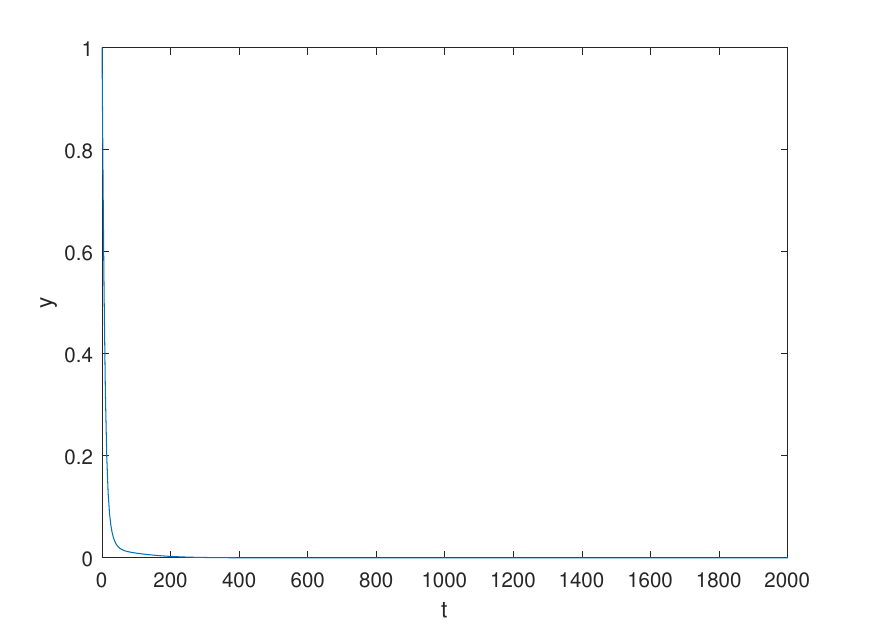}
		\caption{Temporal series of y(t)}
		\label{fig:image3}
	\end{subfigure}
	
	\begin{subfigure}{0.3\textwidth}
		\centering
		\includegraphics[width=4.2cm,height=3.15cm]{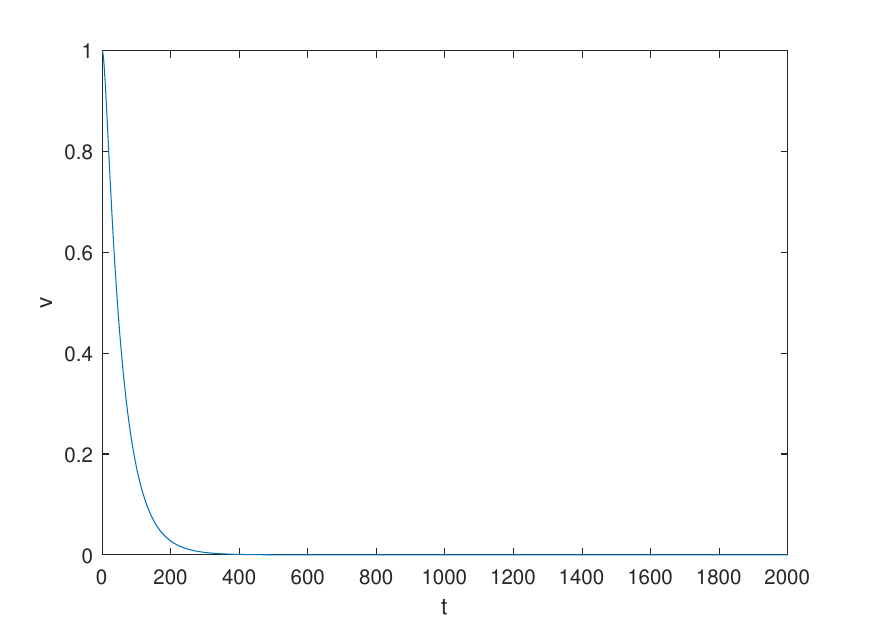}
		\caption{Temporal series of v(t)}
		\label{fig:image4}
	\end{subfigure}%
	\hfill
	\begin{subfigure}{0.3\textwidth}
		\centering
		\includegraphics[width=4.2cm,height=3.15cm]{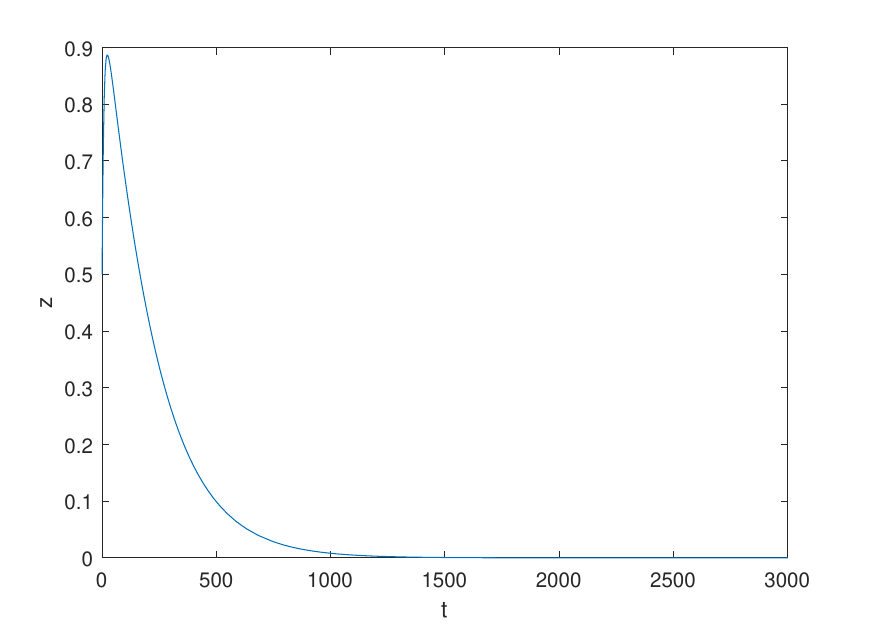}
		\caption{Temporal series of z(t)}
		\label{fig:image5}
	\end{subfigure}%
	\hfill
	\begin{subfigure}{0.3\textwidth}
		\centering
		\includegraphics[width=4.2cm,height=3.15cm]{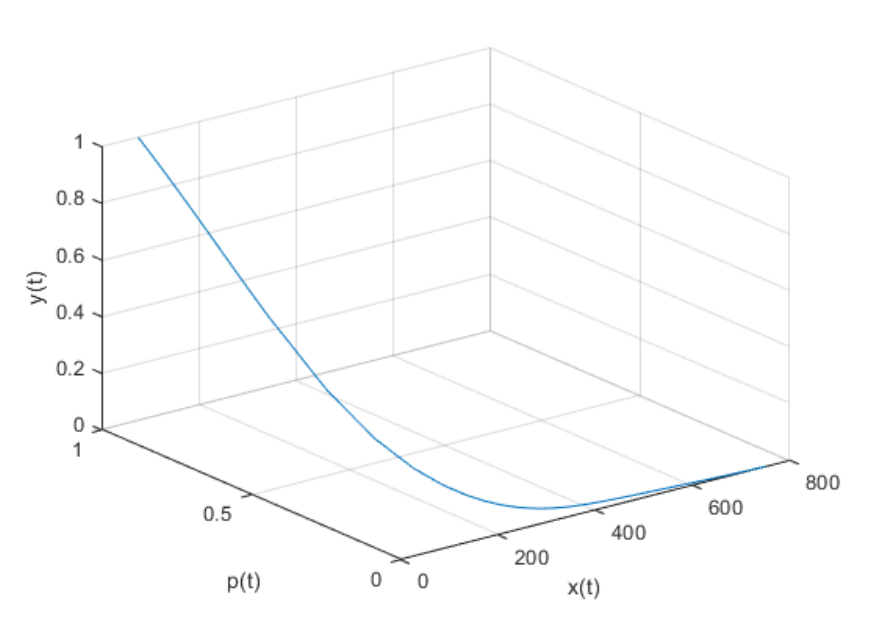}
		\caption{3D phase for x,p,y}
		\label{fig:image6}
	\end{subfigure}
	
	\caption{Simulation result of $E_0$ while $\tau_1=1$, $\tau_2=1$ and$\tau_3=2$}
	\label{fig}
\end{figure}
\begin{figure}[H] 
	\centering
	\begin{subfigure}{0.3\textwidth}
		\centering
		\includegraphics[width=4.2cm,height=3.15cm]{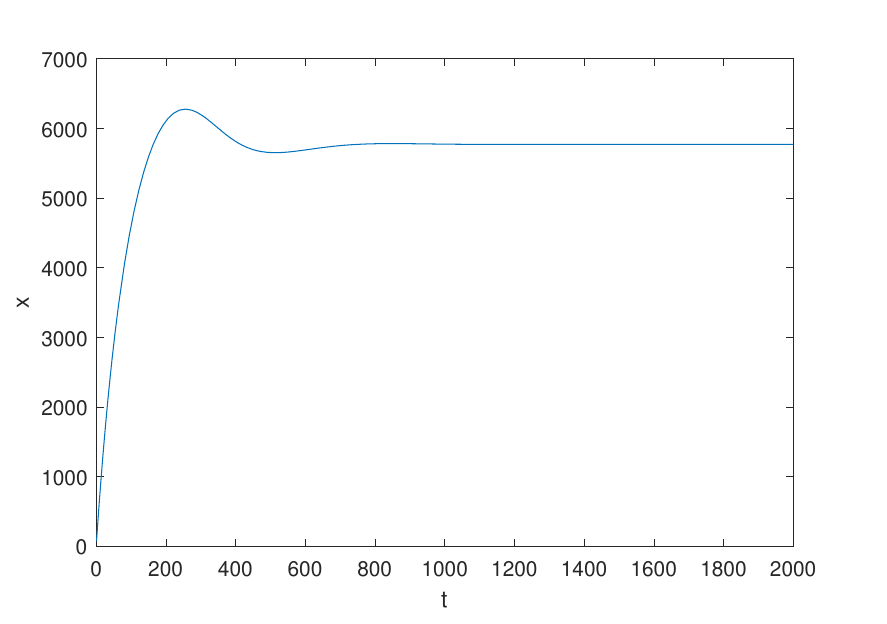}
		\caption{Temporal series of x(t)}
		\label{fig:image1}
	\end{subfigure}%
	\hfill
	\begin{subfigure}{0.3\textwidth}
		\centering
		\includegraphics[width=4.2cm,height=3.15cm]{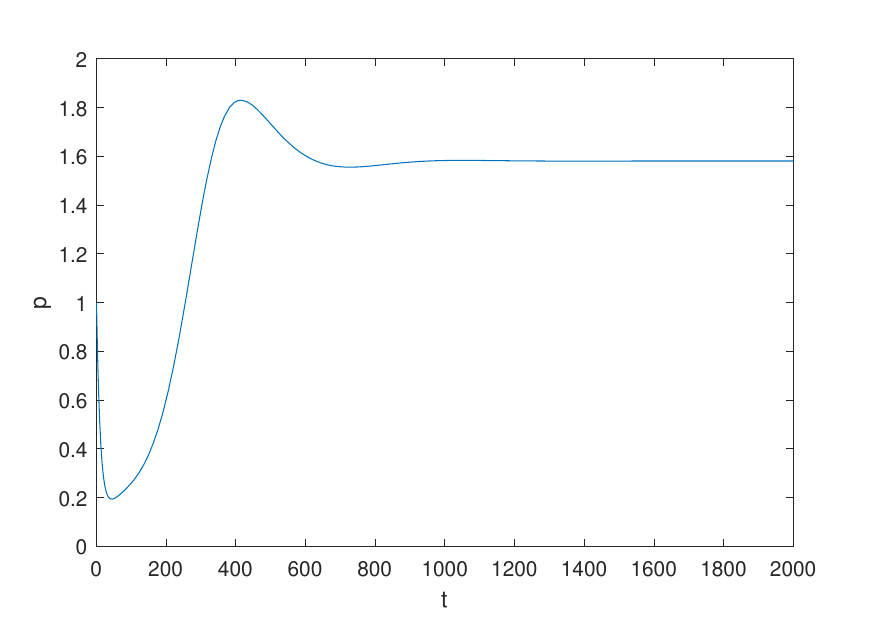}
		\caption{Temporal series of p(t)}
		\label{fig:image2}
	\end{subfigure}%
	\hfill
	\begin{subfigure}{0.3\textwidth}
		\centering
		\includegraphics[width=4.2cm,height=3.15cm]{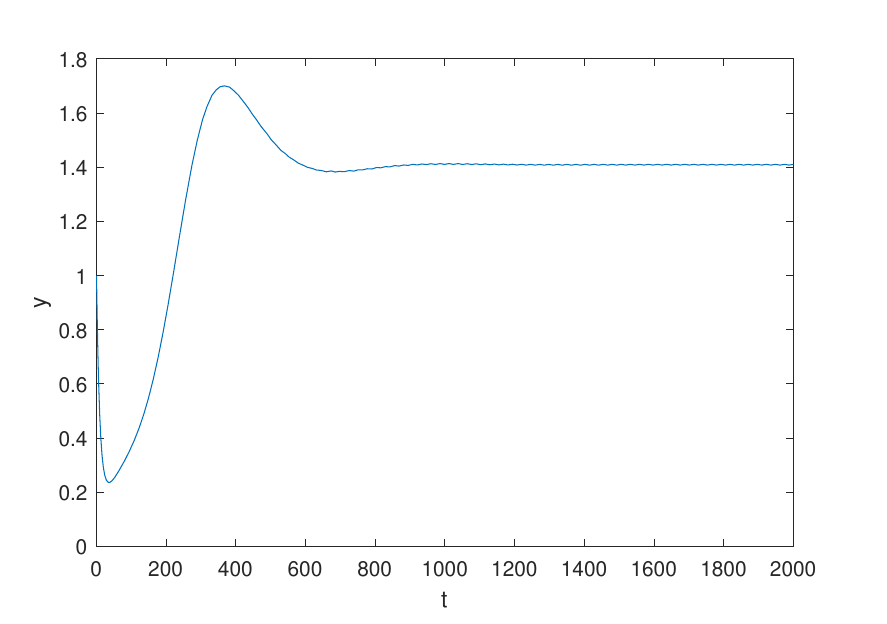}
		\caption{Temporal series of y(t)}
		\label{fig:image3}
	\end{subfigure}
	
	\begin{subfigure}{0.3\textwidth}
		\centering
		\includegraphics[width=4.2cm,height=3.15cm]{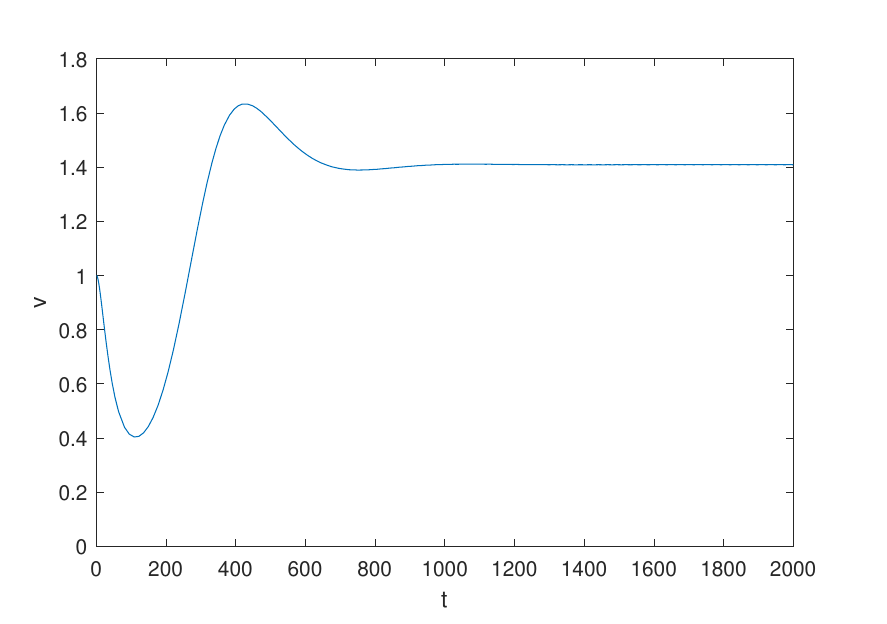}
		\caption{Temporal series of v(t)}
		\label{fig:image4}
	\end{subfigure}%
	\hfill
	\begin{subfigure}{0.3\textwidth}
		\centering
		\includegraphics[width=4.2cm,height=3.15cm]{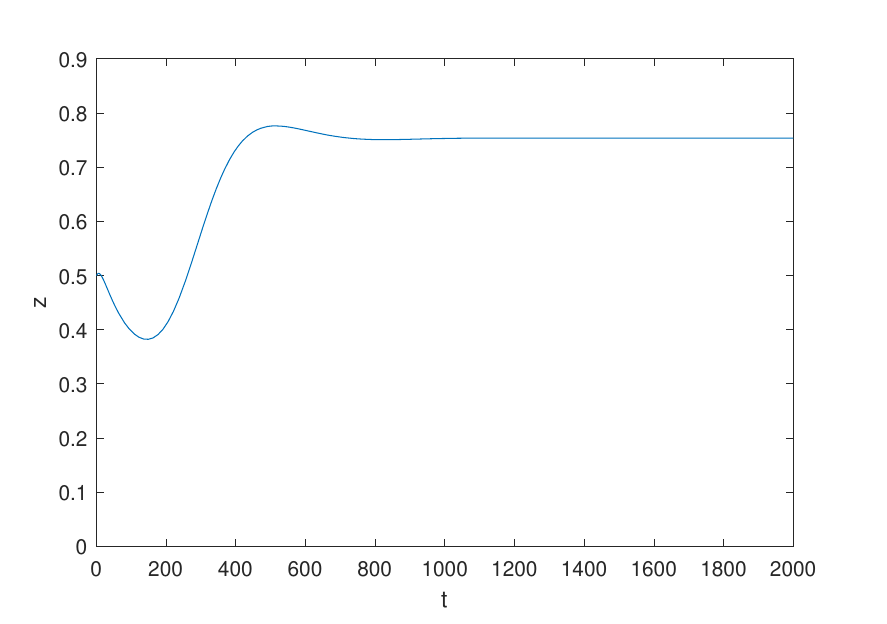}
		\caption{Temporal series of z(t)}
		\label{fig:image5}
	\end{subfigure}%
	\hfill
	\begin{subfigure}{0.3\textwidth}
		\centering
		\includegraphics[width=4.2cm,height=3.15cm]{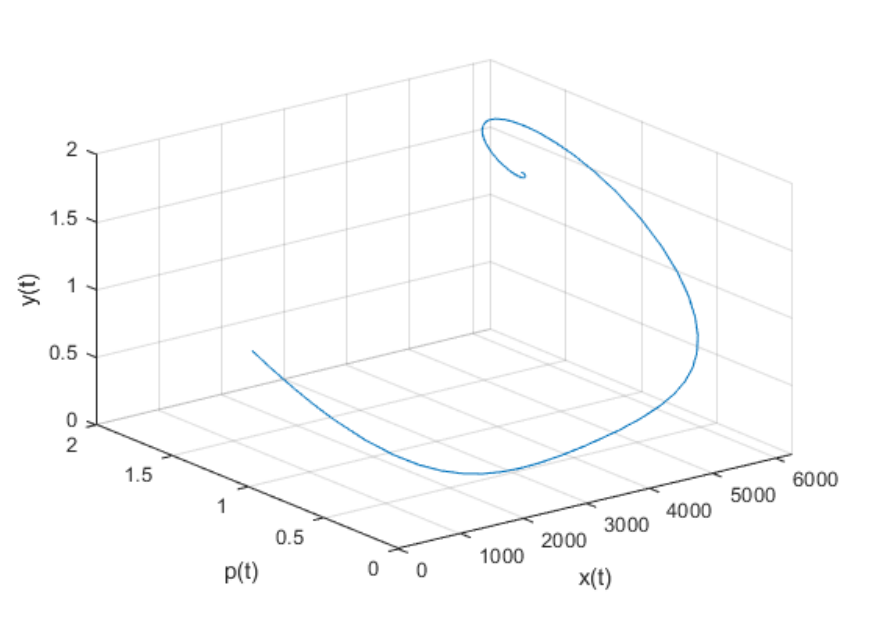}
		\caption{3D phase for x,p,y}
		\label{fig:image6}
	\end{subfigure}
	
	\caption{Simulation result of $E_1$ while $\tau_1=1$, $\tau_2=1$ and$\tau_3=2$}
	\label{fig}
\end{figure}
\begin{figure}[htbp] 
	\centering
	\begin{subfigure}{0.3\textwidth}
		\centering
		\includegraphics[width=4.2cm,height=3.15cm]{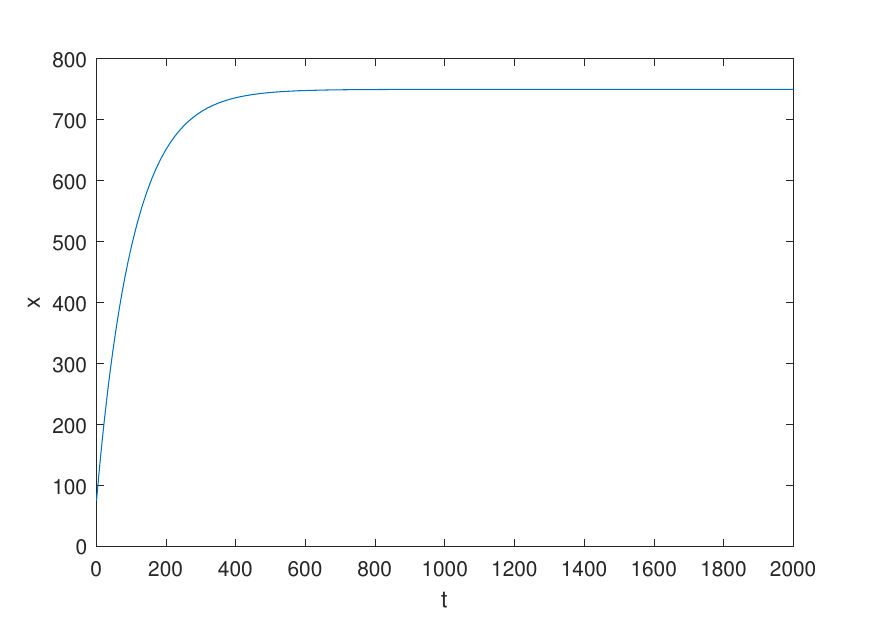}
		\caption{Temporal series of x(t)}
		\label{fig:image1}
	\end{subfigure}%
	\hfill
	\begin{subfigure}{0.3\textwidth}
		\centering
		\includegraphics[width=4.2cm,height=3.15cm]{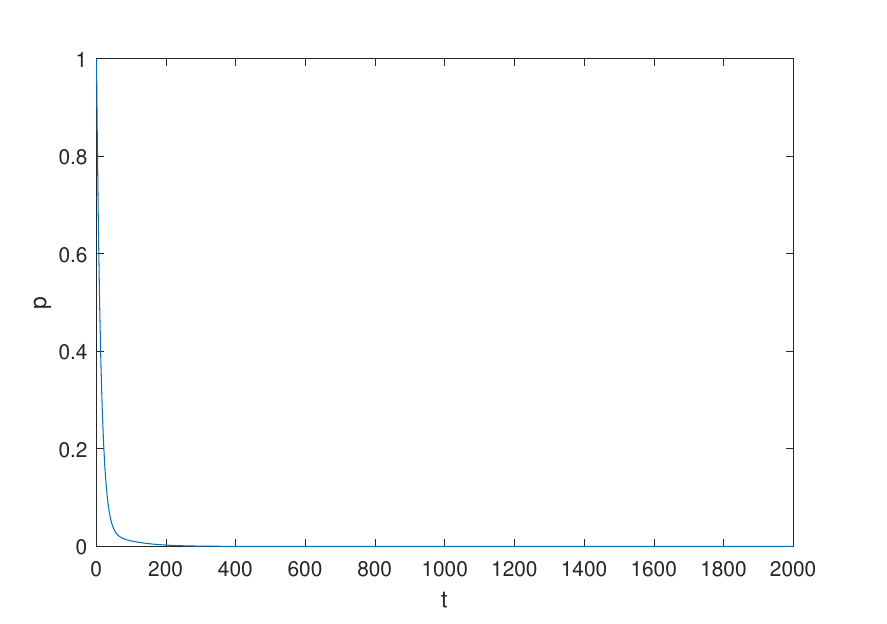}
		\caption{Temporal series of p(t)}
		\label{fig:image2}
	\end{subfigure}%
	\hfill
	\begin{subfigure}{0.3\textwidth}
		\centering
		\includegraphics[width=4.2cm,height=3.15cm]{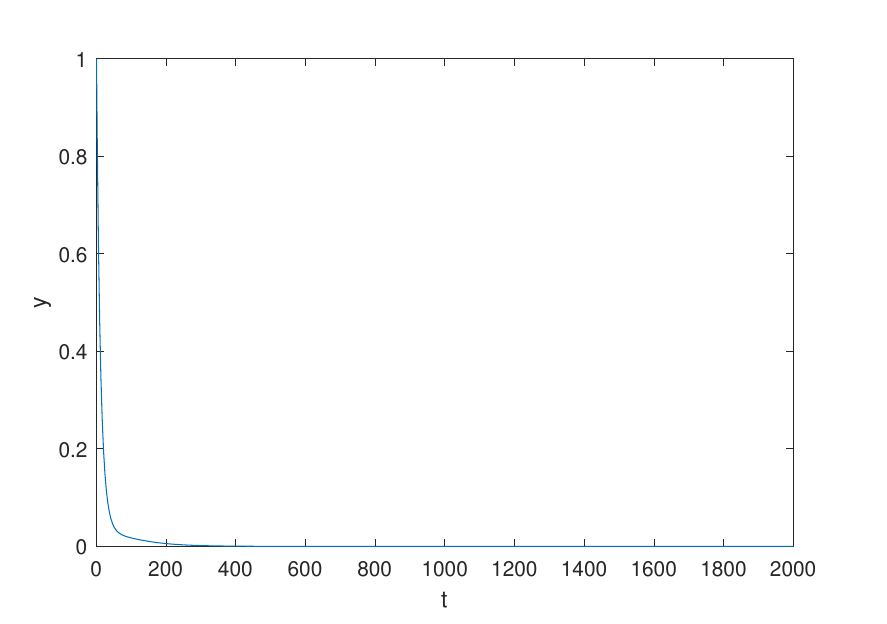}
		\caption{Temporal series of y(t)}
		\label{fig:image3}
	\end{subfigure}
	
	\begin{subfigure}{0.3\textwidth}
		\centering
		\includegraphics[width=4.2cm,height=3.15cm]{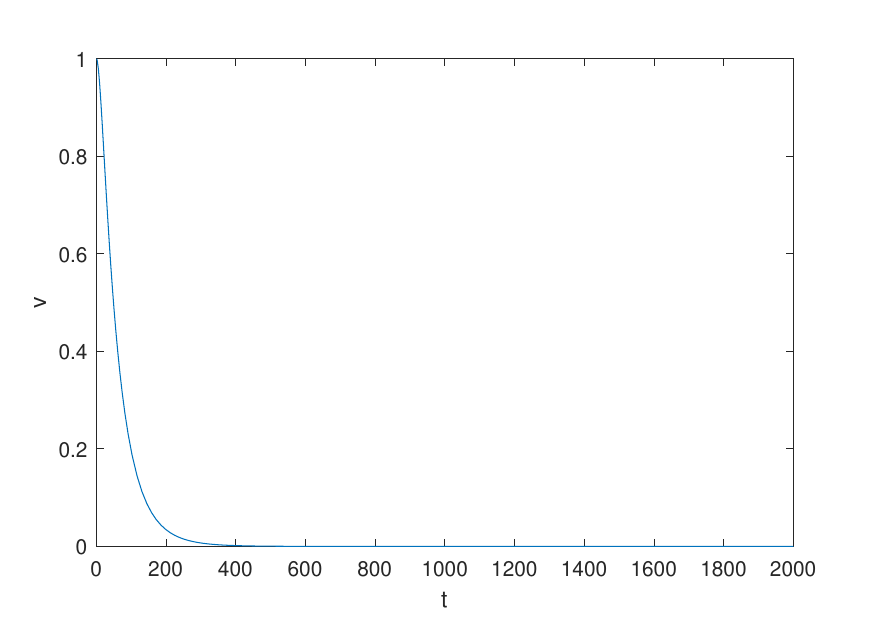}
		\caption{Temporal series of v(t)}
		\label{fig:image4}
	\end{subfigure}%
	\hfill
	\begin{subfigure}{0.3\textwidth}
		\centering
		\includegraphics[width=4.2cm,height=3.15cm]{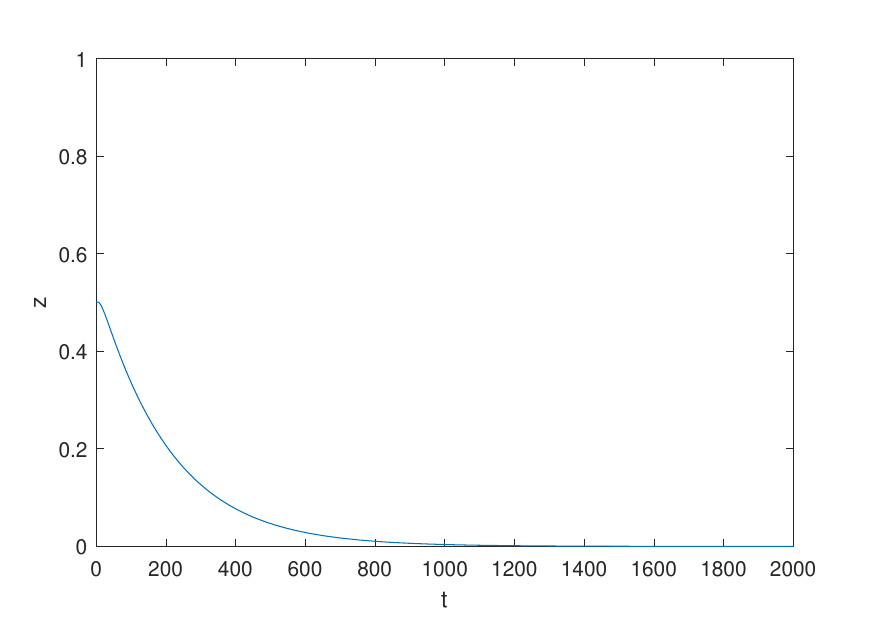}
		\caption{Temporal series of z(t)}
		\label{fig:image5}
	\end{subfigure}%
	\hfill
	\begin{subfigure}{0.3\textwidth}
		\centering
		\includegraphics[width=4.2cm,height=3.15cm]{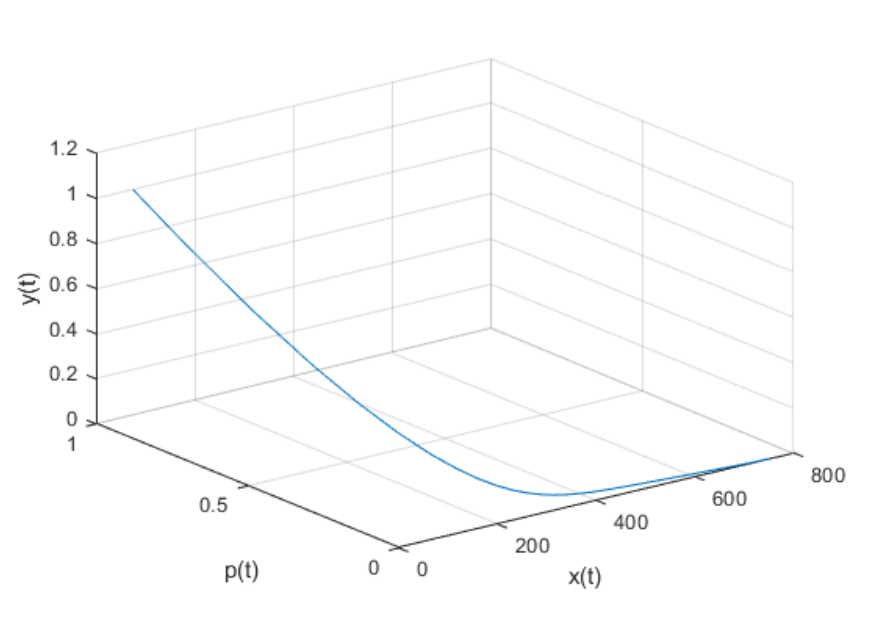}
		\caption{3D phase for x,p,y}
		\label{fig:image6}
	\end{subfigure}
	
	\caption{Simulation result of $E_2$ while $\tau_1=0.25$, $\tau_2=0.25$ and$\tau_3=0$}
	\label{fig}
\end{figure}
\begin{figure}[htbp] 
	\centering
	\begin{subfigure}{0.3\textwidth}
		\centering
		\includegraphics[width=4.2cm,height=3.15cm]{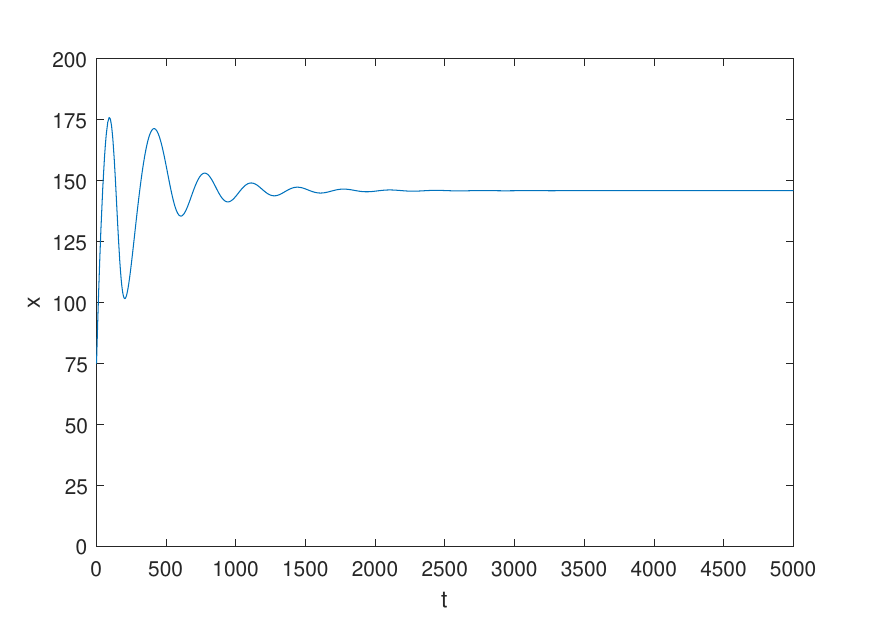}
		\caption{Temporal series of x(t)}
		\label{fig:image1}
	\end{subfigure}%
	\hfill
	\begin{subfigure}{0.3\textwidth}
		\centering
		\includegraphics[width=4.2cm,height=3.15cm]{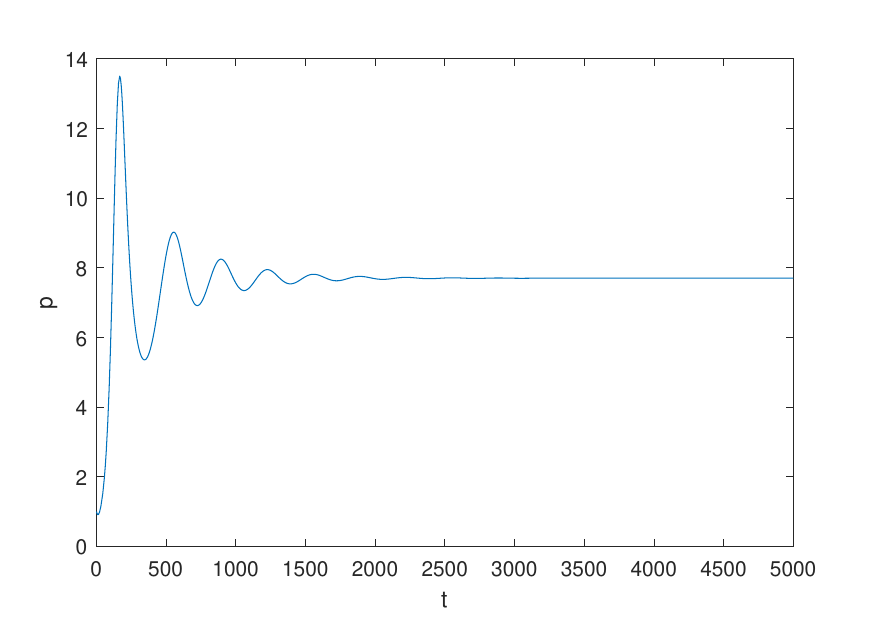}
		\caption{Temporal series of p(t)}
		\label{fig:image2}
	\end{subfigure}%
	\hfill
	\begin{subfigure}{0.3\textwidth}
		\centering
		\includegraphics[width=4.2cm,height=3.15cm]{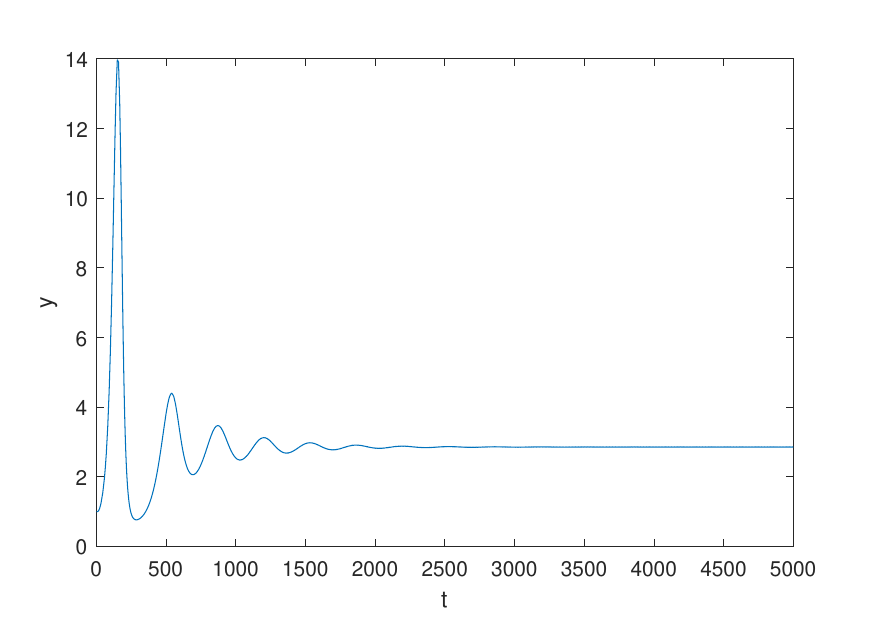}
		\caption{Temporal series of y(t)}
		\label{fig:image3}
	\end{subfigure}
	
	\begin{subfigure}{0.3\textwidth}
		\centering
		\includegraphics[width=4.2cm,height=3.15cm]{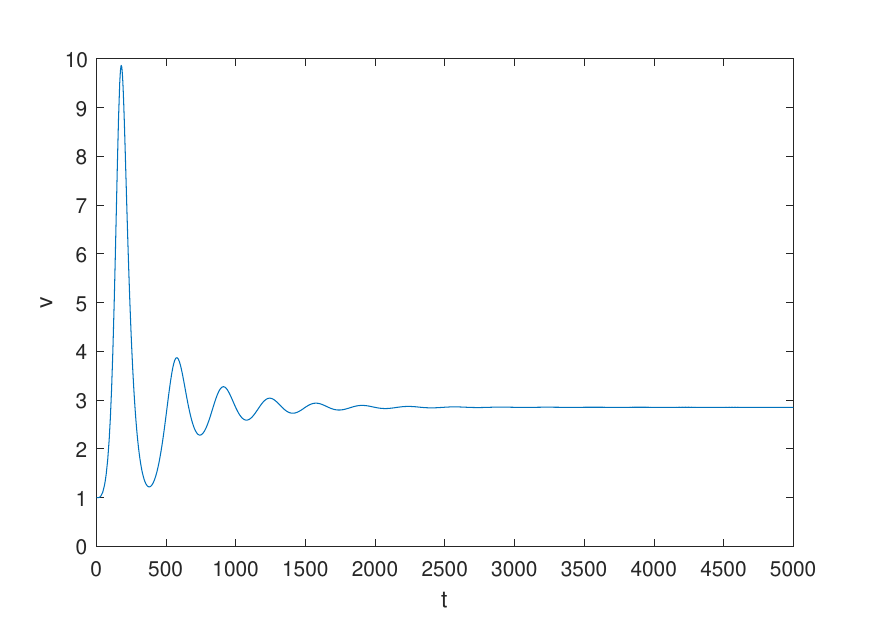}
		\caption{Temporal series of v(t)}
		\label{fig:image4}
	\end{subfigure}%
	\hfill
	\begin{subfigure}{0.3\textwidth}
		\centering
		\includegraphics[width=4.2cm,height=3.15cm]{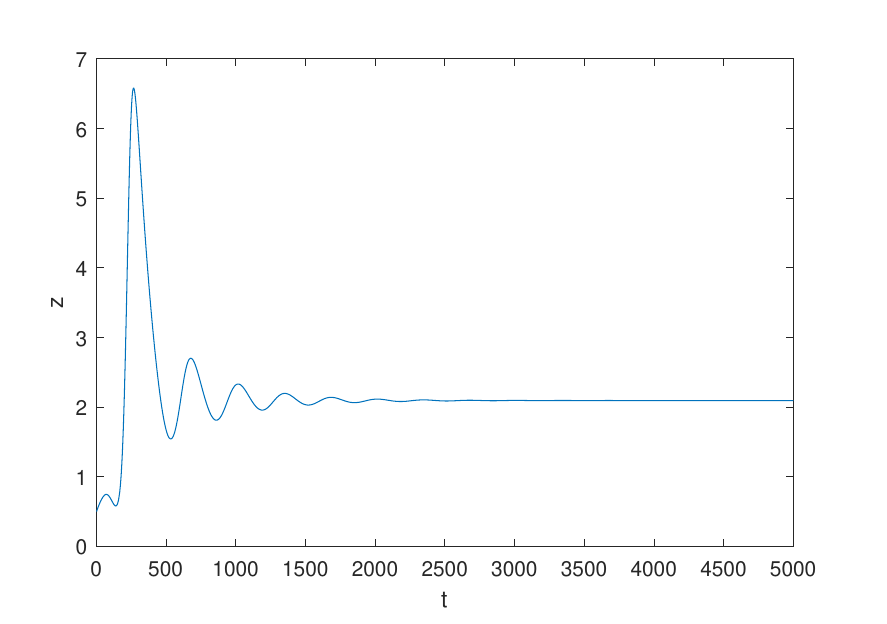}
		\caption{Temporal series of z(t)}
		\label{fig:image5}
	\end{subfigure}%
	\hfill
	\begin{subfigure}{0.3\textwidth}
		\centering
		\includegraphics[width=4.2cm,height=3.15cm]{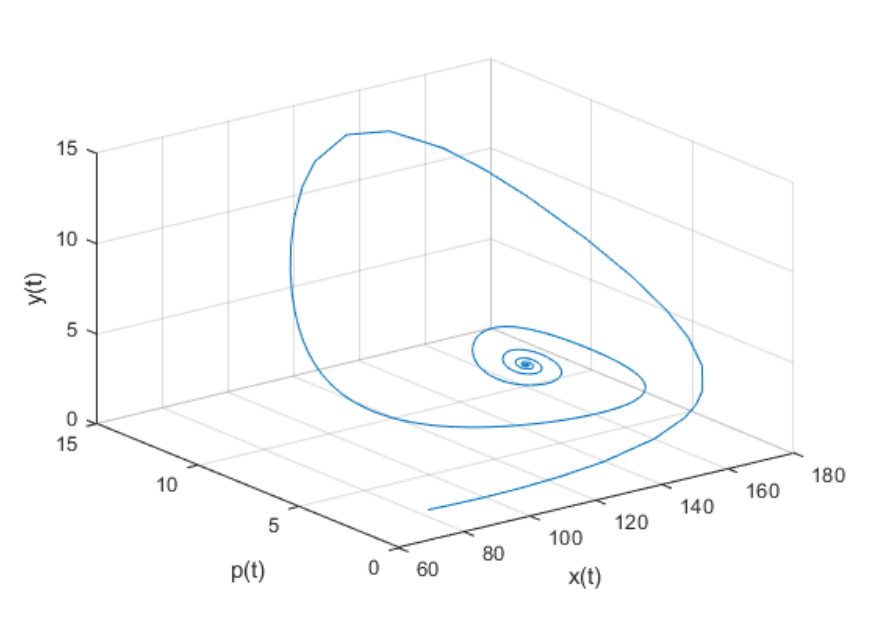}
		\caption{3D phase for x,p,y}
		\label{fig:image6}
	\end{subfigure}
	
	\caption{Simulation result of $E_2$ while $\tau_1=\tau_2=0$ and $\tau_3=70$}
	\label{fig}
\end{figure}

\begin{figure}[htbp] 
	\centering
	\begin{subfigure}{0.3\textwidth}
		\centering
		\includegraphics[width=4.2cm,height=3.15cm]{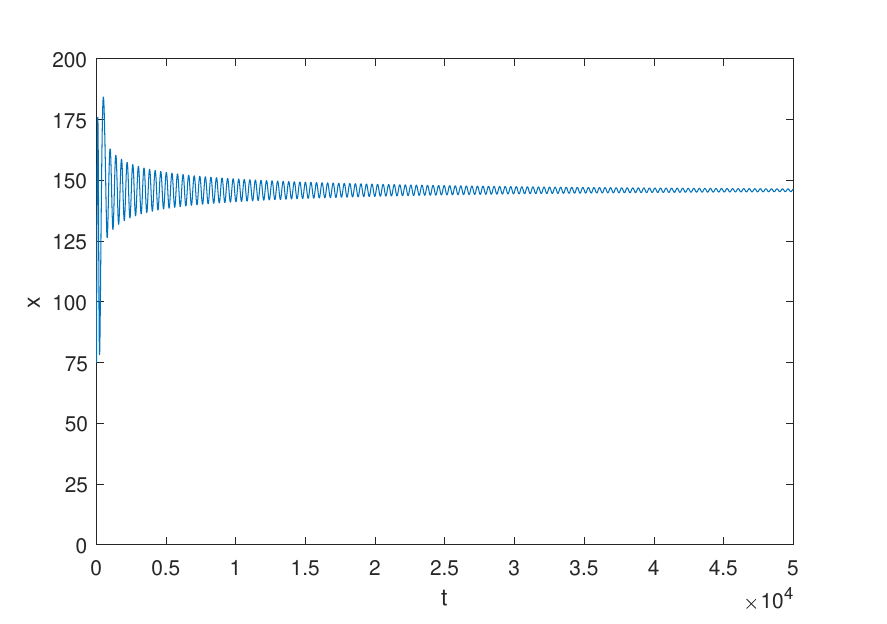}
		\caption{Temporal series of x(t)}
		\label{fig:image1}
	\end{subfigure}%
	\hfill
	\begin{subfigure}{0.3\textwidth}
		\centering
		\includegraphics[width=4.2cm,height=3.15cm]{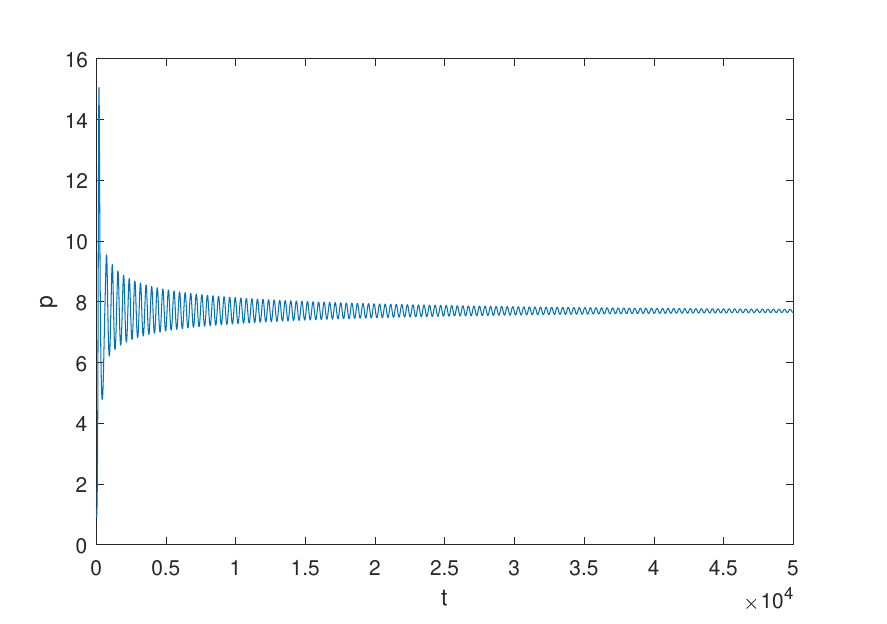}
		\caption{Temporal series of p(t)}
		\label{fig:image2}
	\end{subfigure}%
	\hfill
	\begin{subfigure}{0.3\textwidth}
		\centering
		\includegraphics[width=4.2cm,height=3.15cm]{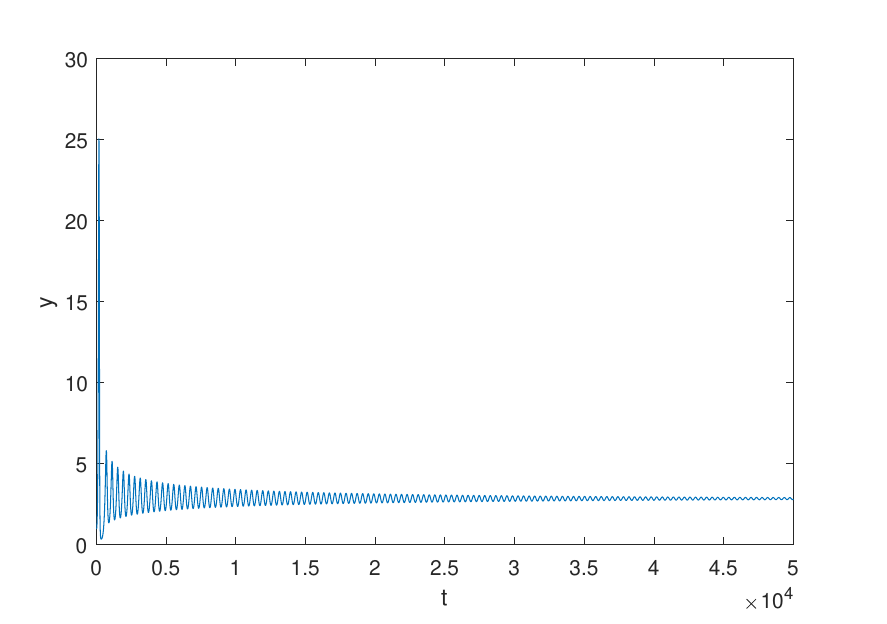}
		\caption{Temporal series of y(t)}
		\label{fig:image3}
	\end{subfigure}
	
	\begin{subfigure}{0.3\textwidth}
		\centering
		\includegraphics[width=4.2cm,height=3.15cm]{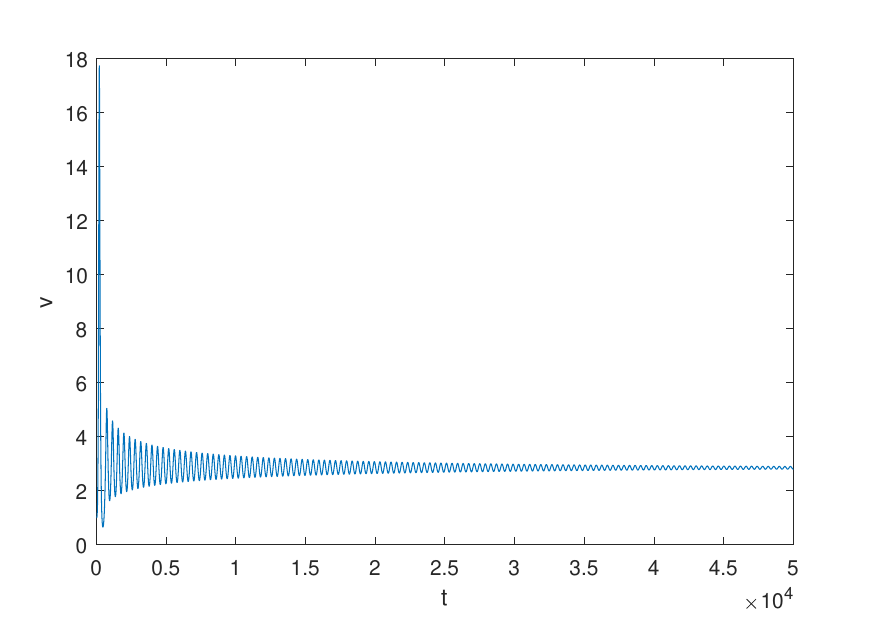}
		\caption{Temporal series of v(t)}
		\label{fig:image4}
	\end{subfigure}%
	\hfill
	\begin{subfigure}{0.3\textwidth}
		\centering
		\includegraphics[width=4.2cm,height=3.15cm]{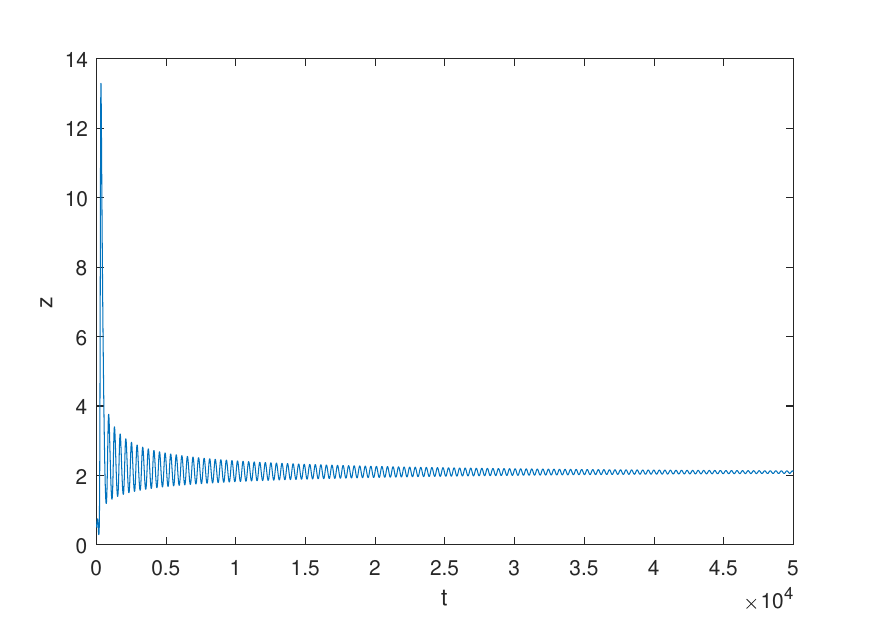}
		\caption{Temporal series of z(t)}
		\label{fig:image5}
	\end{subfigure}%
	\hfill
	\begin{subfigure}{0.3\textwidth}
		\centering
		\includegraphics[width=4.2cm,height=3.15cm]{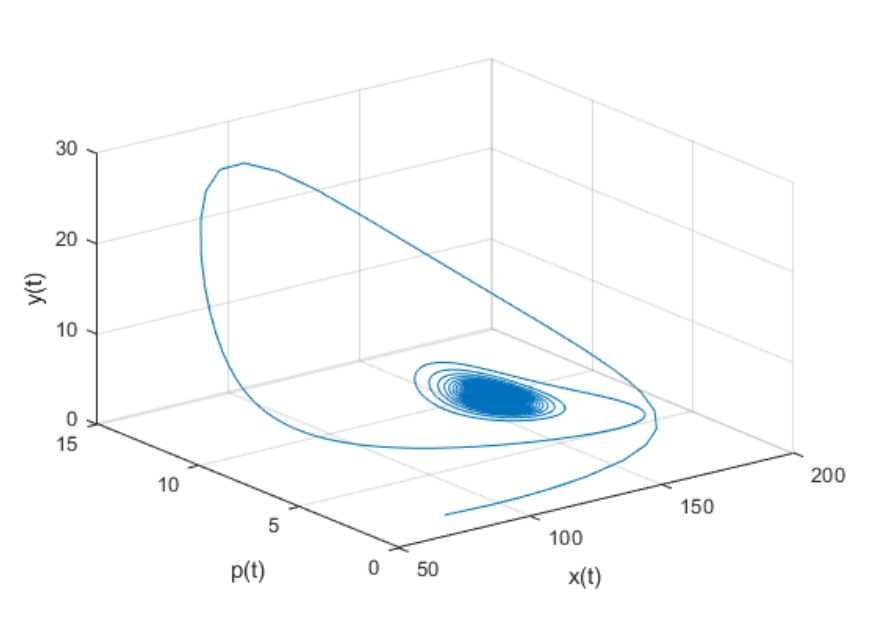}
		\caption{3D phase for x,p,y}
		\label{fig:image6}
	\end{subfigure}
	
	\caption{Simulation result of $E_2$ while $\tau_1=\tau_2=0$ and $\tau_3=100$}
	\label{fig}
\end{figure}

\begin{figure}[htbp] 
	\centering
	\begin{subfigure}{0.3\textwidth}
		\centering
		\includegraphics[width=4.2cm,height=3.15cm]{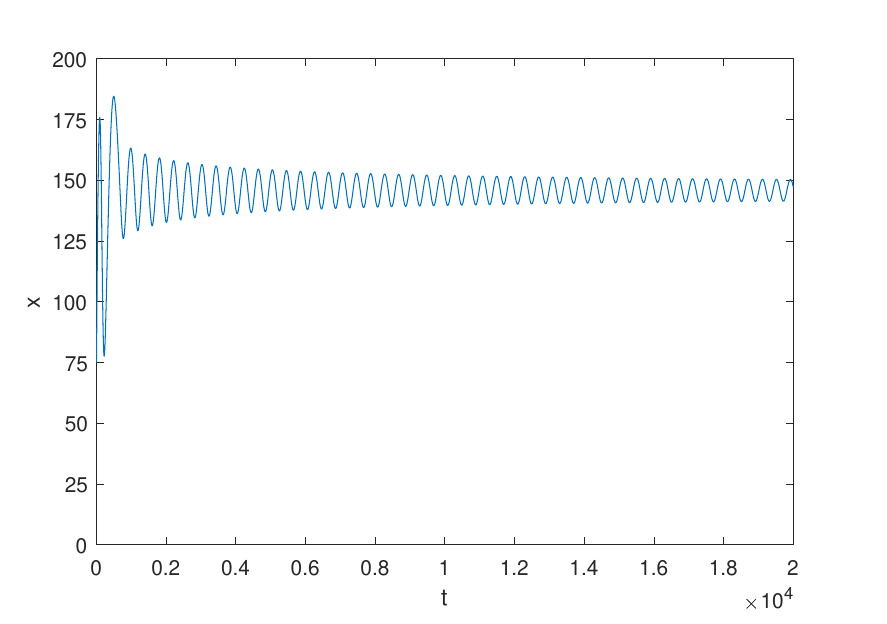}
		\caption{Temporal series of x(t)}
		\label{fig:image1}
	\end{subfigure}%
	\hfill
	\begin{subfigure}{0.3\textwidth}
		\centering
		\includegraphics[width=4.2cm,height=3.15cm]{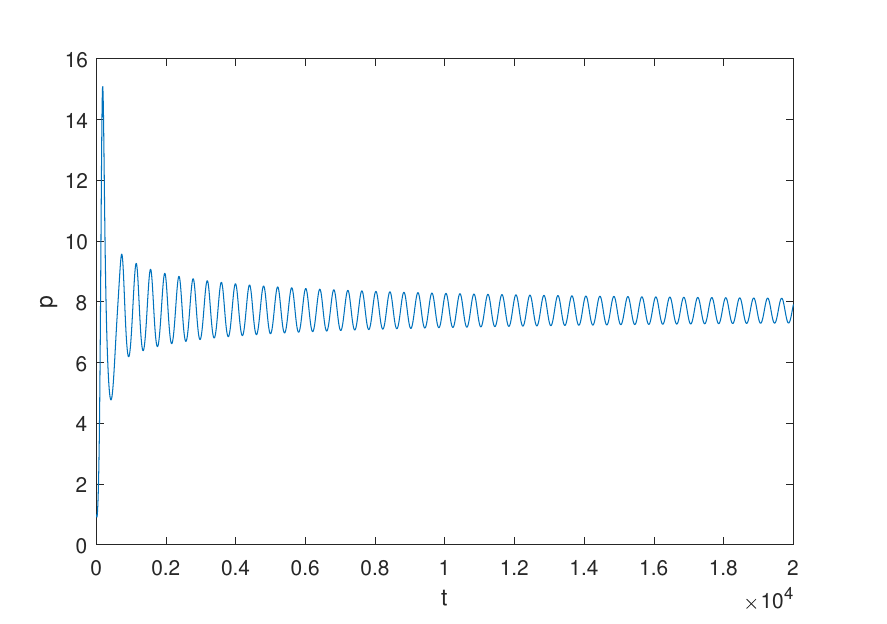}
		\caption{Temporal series of p(t)}
		\label{fig:image2}
	\end{subfigure}%
	\hfill
	\begin{subfigure}{0.3\textwidth}
		\centering
		\includegraphics[width=4.2cm,height=3.15cm]{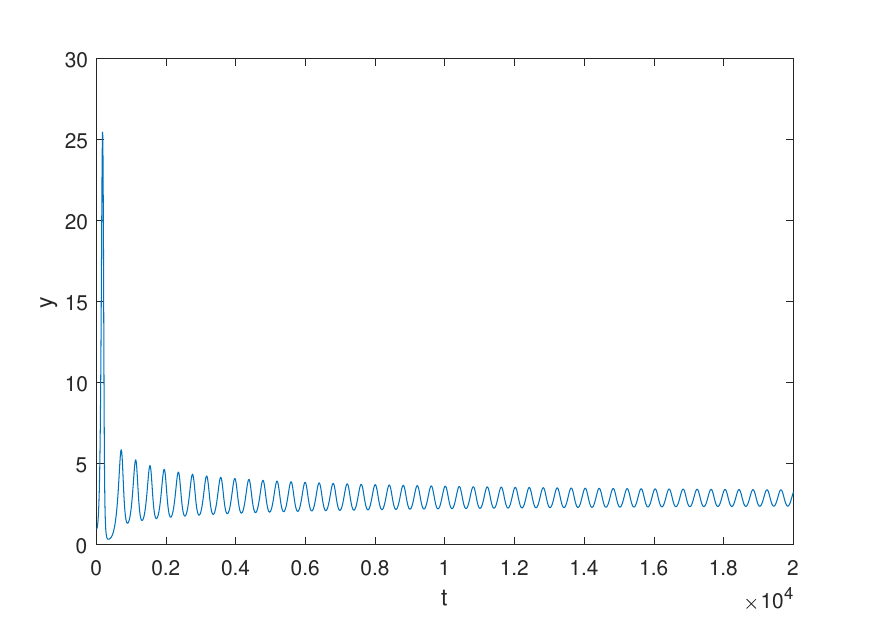}
		\caption{Temporal series of y(t)}
		\label{fig:image3}
	\end{subfigure}
	
	\begin{subfigure}{0.3\textwidth}
		\centering
		\includegraphics[width=4.2cm,height=3.15cm]{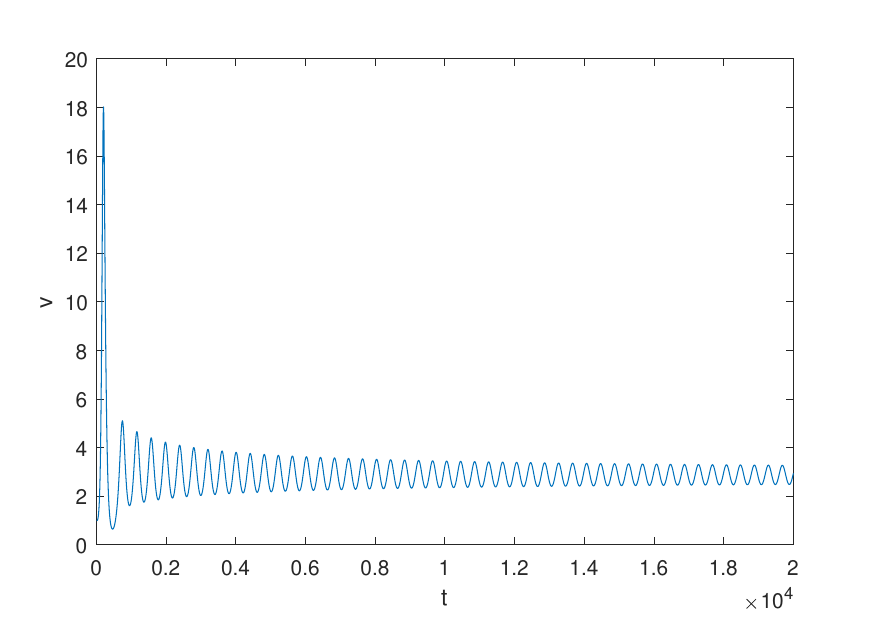}
		\caption{Temporal series of v(t)}
		\label{fig:image4}
	\end{subfigure}%
	\hfill
	\begin{subfigure}{0.3\textwidth}
		\centering
		\includegraphics[width=4.2cm,height=3.15cm]{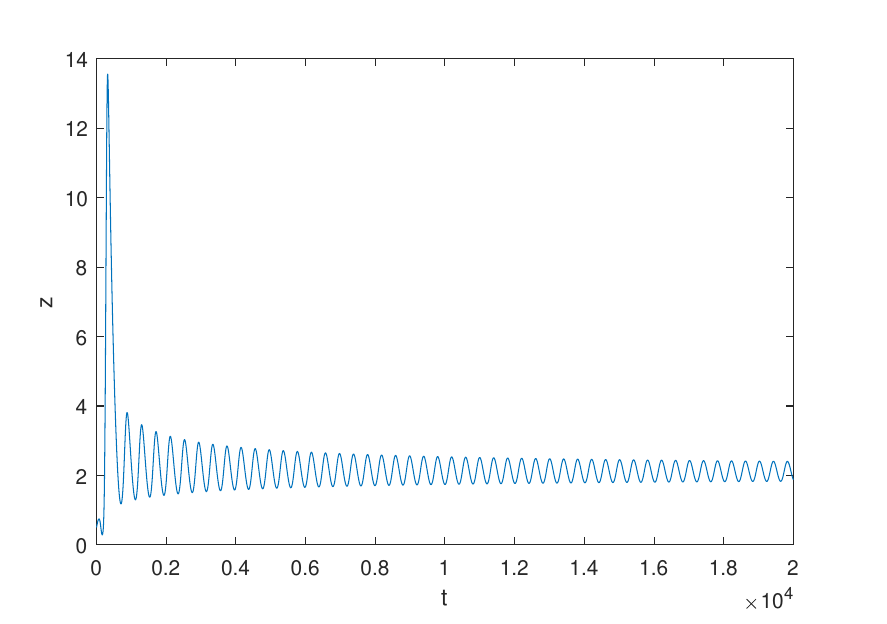}
		\caption{Temporal series of z(t)}
		\label{fig:image5}
	\end{subfigure}%
	\hfill
	\begin{subfigure}{0.3\textwidth}
		\centering
		\includegraphics[width=4.2cm,height=3.15cm]{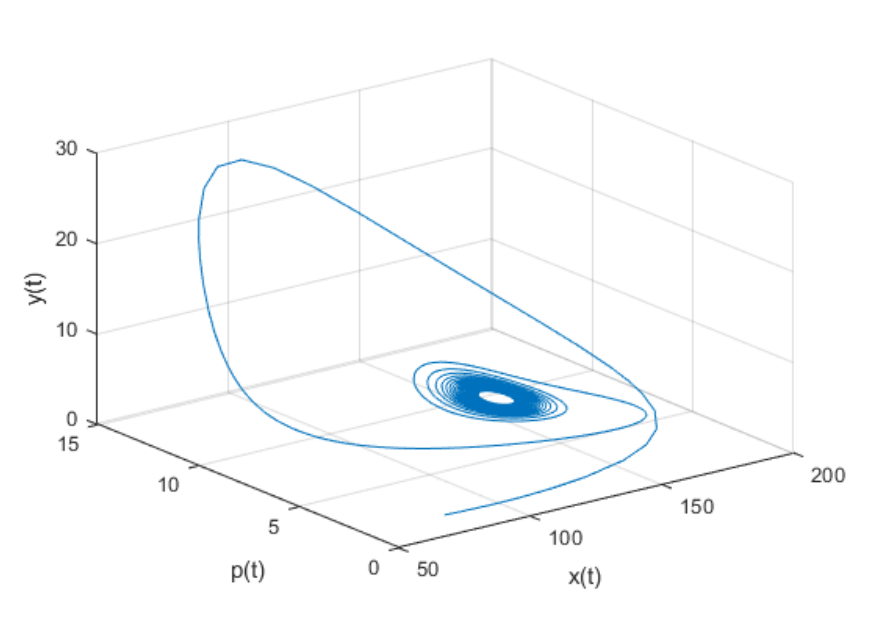}
		\caption{3D phase for x,p,y}
		\label{fig:image6}
	\end{subfigure}
	
	\caption{Simulation result of $E_2$ while $\tau_1=\tau_2=0$ and $\tau_3=101$}
	\label{fig}
\end{figure}
\begin{figure}[htbp] 
	\centering
	\begin{subfigure}{0.3\textwidth}
		\centering
		\includegraphics[width=4.2cm,height=3.15cm]{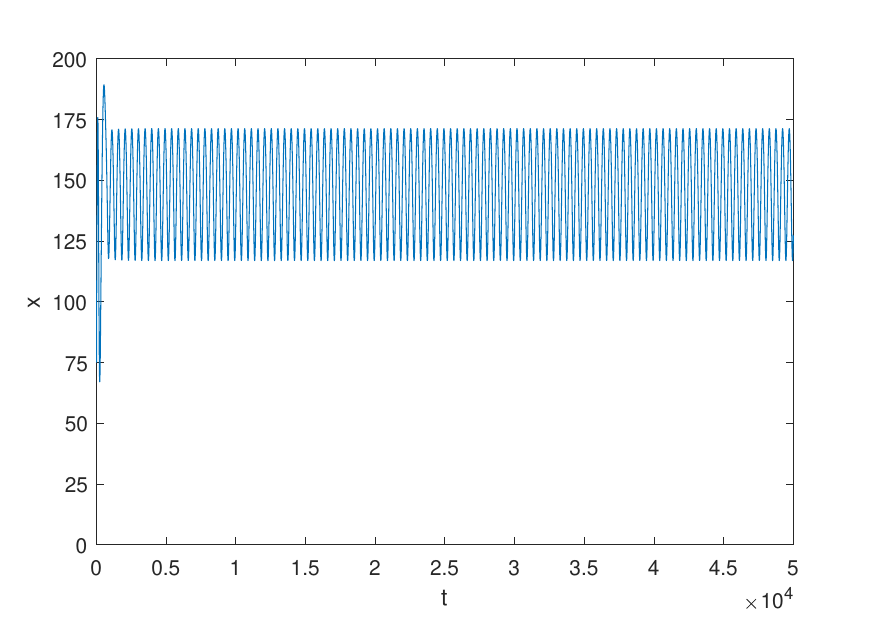}
		\caption{Temporal series of x(t)}
		\label{fig:image1}
	\end{subfigure}%
	\hfill
	\begin{subfigure}{0.3\textwidth}
		\centering
		\includegraphics[width=4.2cm,height=3.15cm]{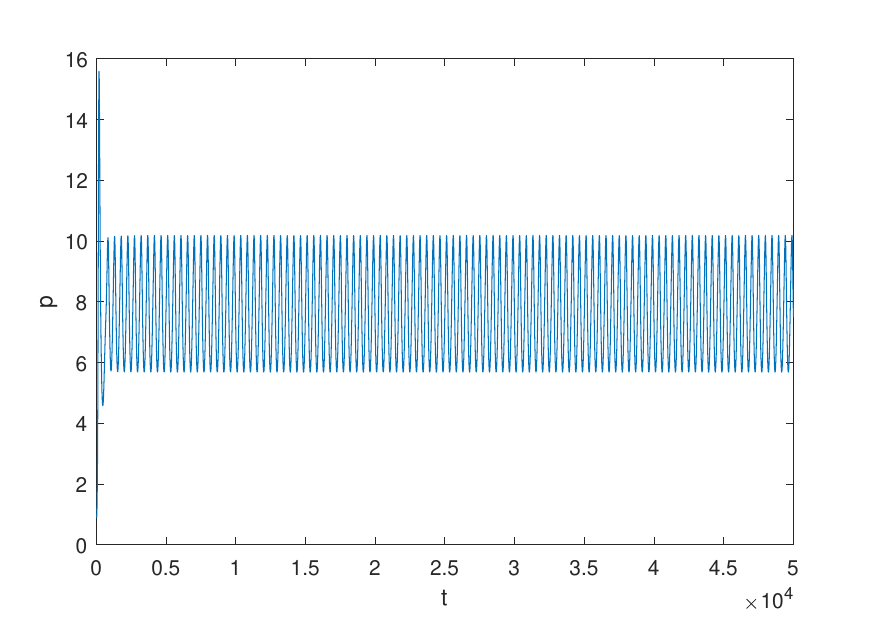}
		\caption{Temporal series of p(t)}
		\label{fig:image2}
	\end{subfigure}%
	\hfill
	\begin{subfigure}{0.3\textwidth}
		\centering
		\includegraphics[width=4.2cm,height=3.15cm]{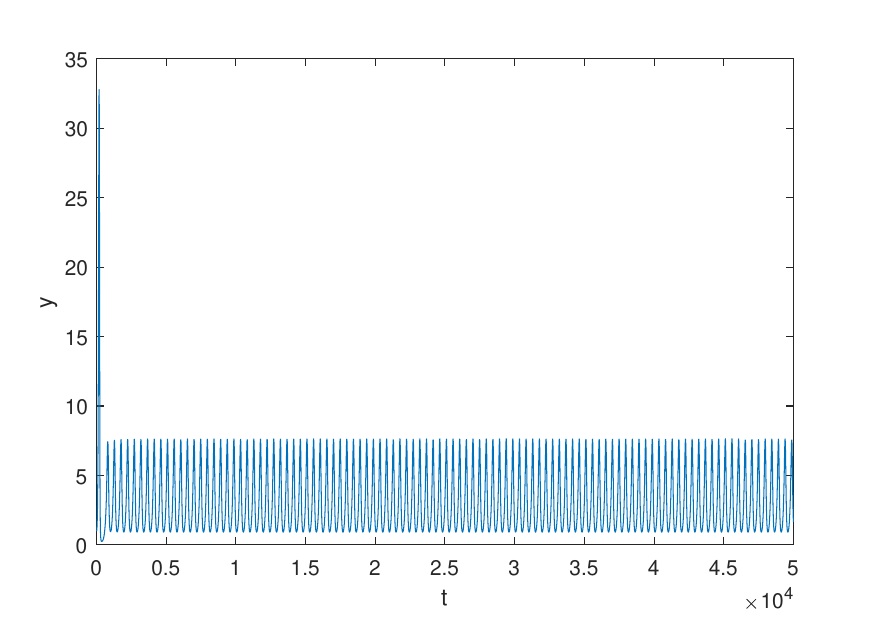}
		\caption{Temporal series of y(t)}
		\label{fig:image3}
	\end{subfigure}
	
	\begin{subfigure}{0.3\textwidth}
		\centering
		\includegraphics[width=4.2cm,height=3.15cm]{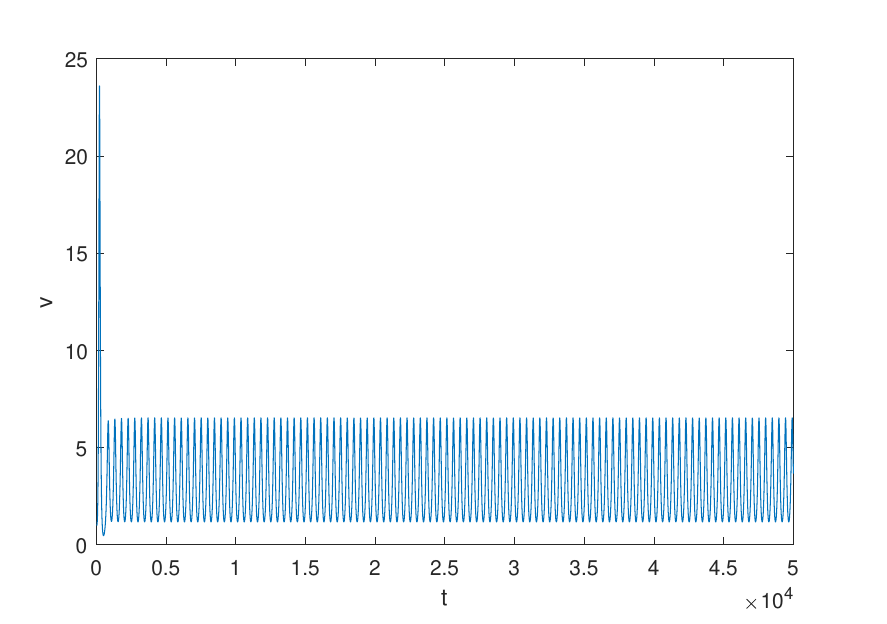}
		\caption{Temporal series of v(t)}
		\label{fig:image4}
	\end{subfigure}%
	\hfill
	\begin{subfigure}{0.3\textwidth}
		\centering
		\includegraphics[width=4.2cm,height=3.15cm]{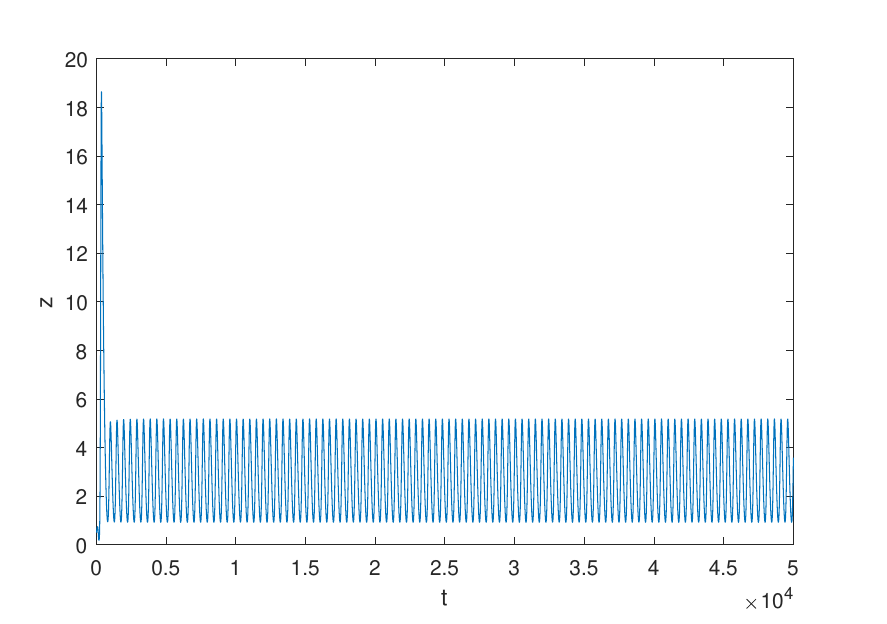}
		\caption{Temporal series of z(t)}
		\label{fig:image5}
	\end{subfigure}%
	\hfill
	\begin{subfigure}{0.3\textwidth}
		\centering
		\includegraphics[width=4.2cm,height=3.15cm]{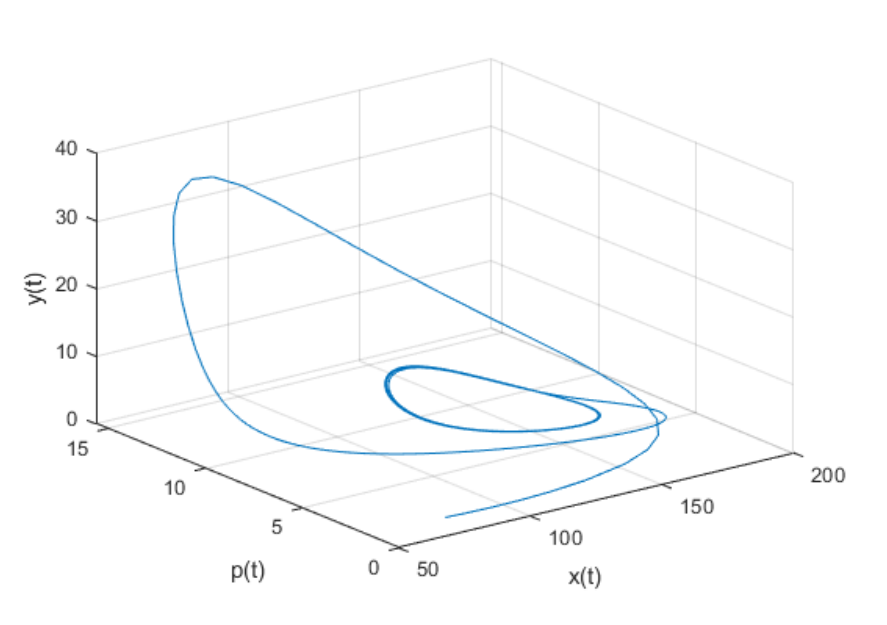}
		\caption{3D phase for x,p,y}
		\label{fig:image6}
	\end{subfigure}
	
	\caption{Simulation result of $E_2$ while $\tau_1=\tau_2=0$ and $\tau_3=120$}
	\label{fig}
\end{figure}
\begin{figure}[htbp] 
	\centering
	\begin{subfigure}{0.3\textwidth}
		\centering
		\includegraphics[width=4.2cm,height=3.15cm]{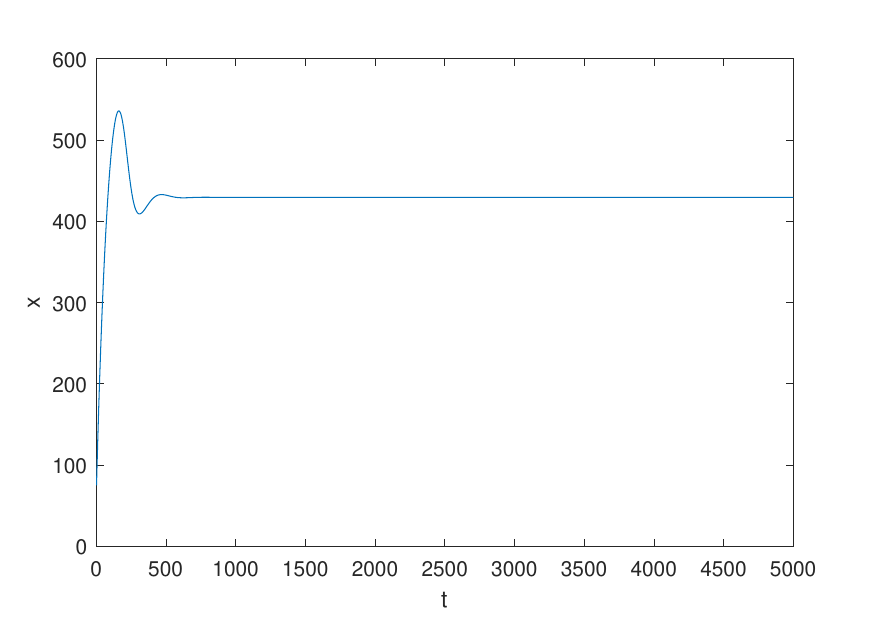}
		\caption{Temporal series of x(t)}
		\label{fig:image1}
	\end{subfigure}%
	\hfill
	\begin{subfigure}{0.3\textwidth}
		\centering
		\includegraphics[width=4.2cm,height=3.15cm]{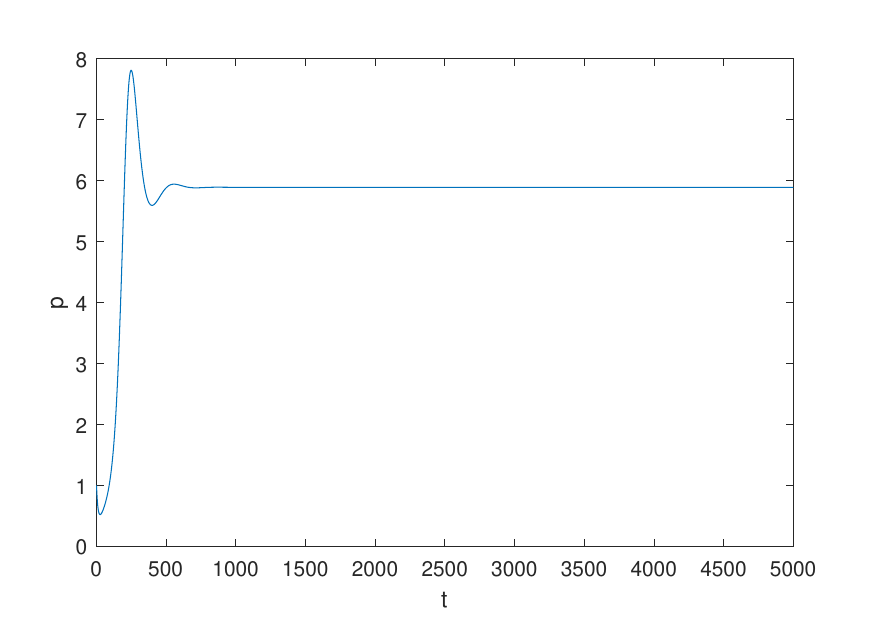}
		\caption{Temporal series of p(t)}
		\label{fig:image2}
	\end{subfigure}%
	\hfill
	\begin{subfigure}{0.3\textwidth}
		\centering
		\includegraphics[width=4.2cm,height=3.15cm]{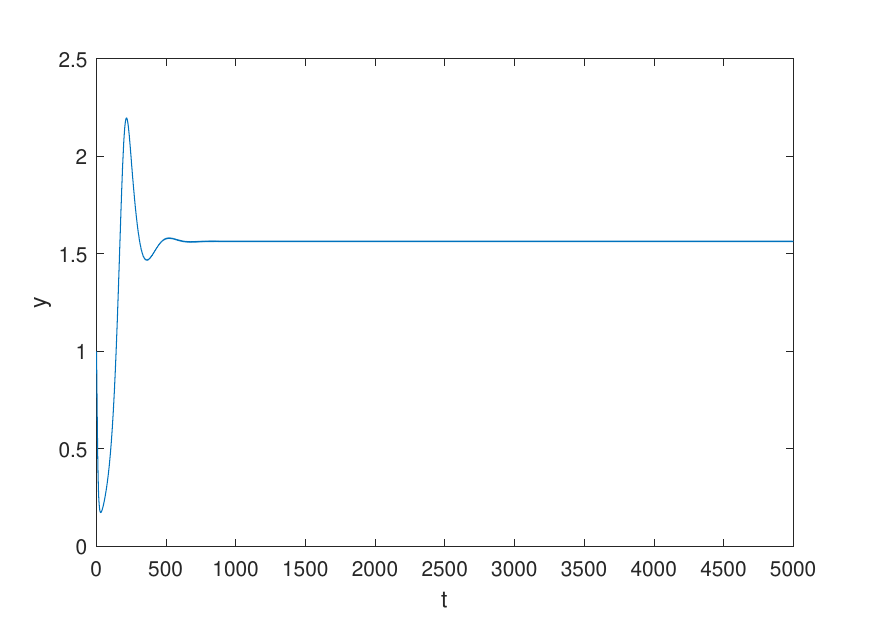}
		\caption{Temporal series of y(t)}
		\label{fig:image3}
	\end{subfigure}
	
	\begin{subfigure}{0.3\textwidth}
		\centering
		\includegraphics[width=4.2cm,height=3.15cm]{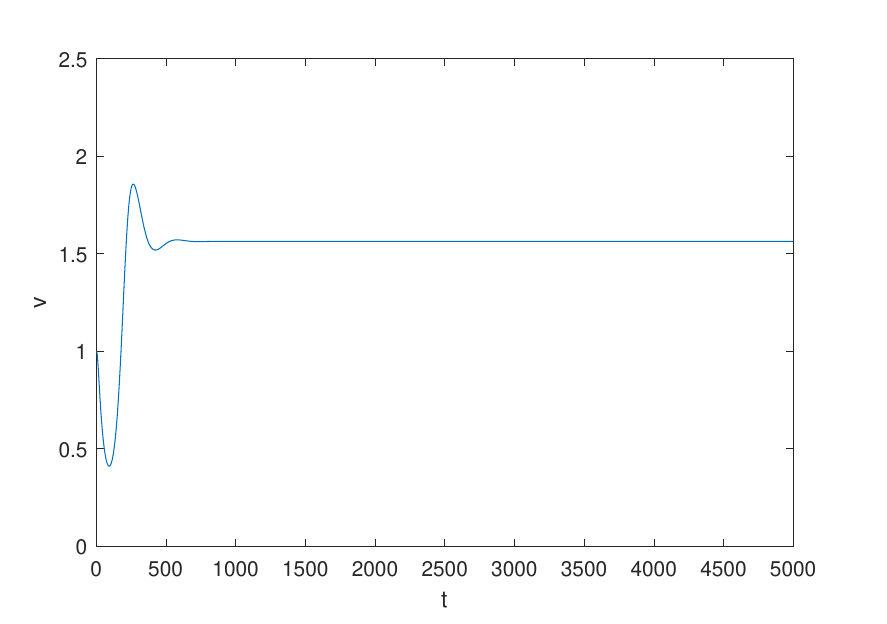}
		\caption{Temporal series of v(t)}
		\label{fig:image4}
	\end{subfigure}%
	\hfill
	\begin{subfigure}{0.3\textwidth}
		\centering
		\includegraphics[width=4.2cm,height=3.15cm]{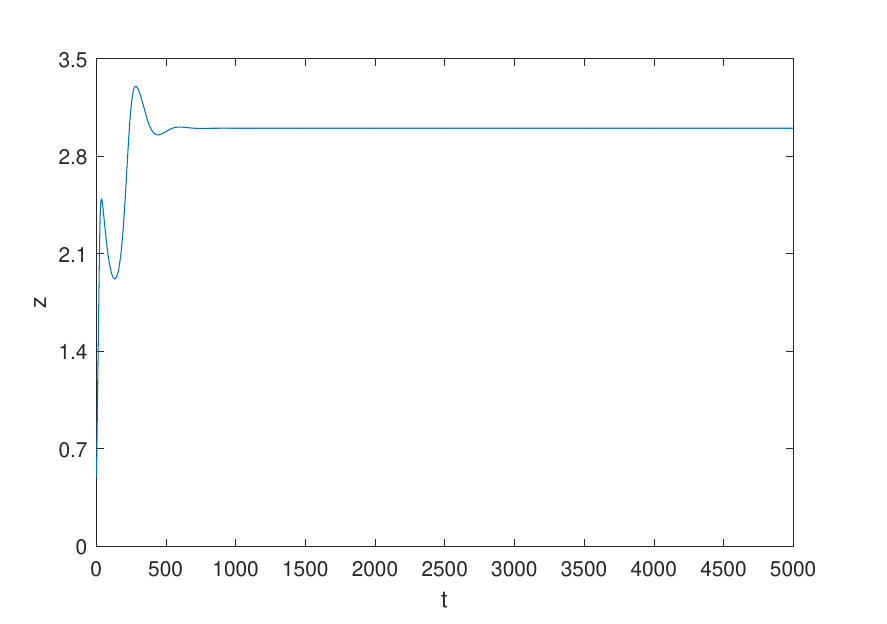}
		\caption{Temporal series of z(t)}
		\label{fig:image5}
	\end{subfigure}%
	\hfill
	\begin{subfigure}{0.3\textwidth}
		\centering
		\includegraphics[width=4.2cm,height=3.15cm]{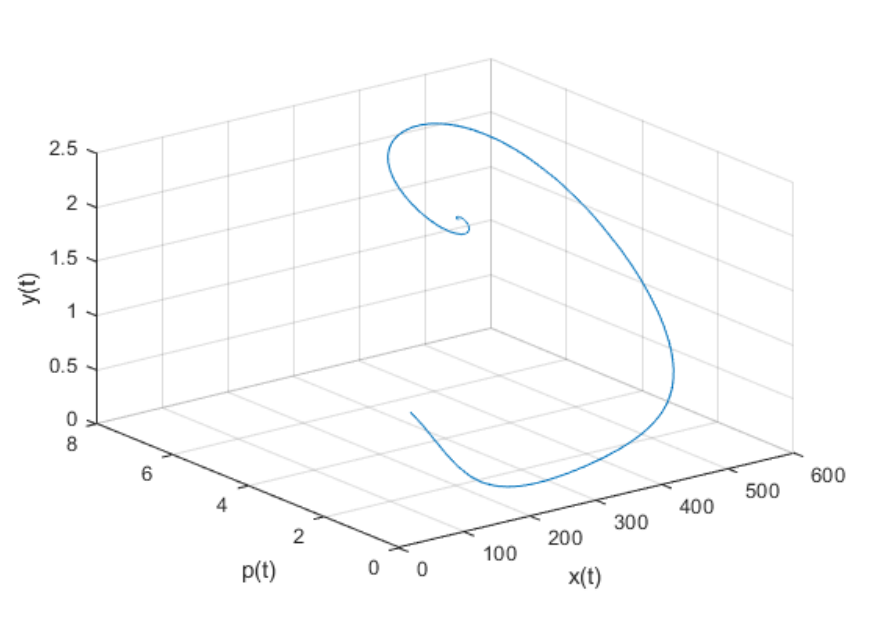}
		\caption{3D phase for x,p,y}
		\label{fig:image6}
	\end{subfigure}
	
	\caption{Simulation result of $E_2$ while $\tau_1=\tau_2=0.25$ and$\tau_3=20$}
	\label{fig}
\end{figure}
\begin{figure}[htbp] 
	\centering
	\begin{subfigure}{0.3\textwidth}
		\centering
		\includegraphics[width=4.2cm,height=3.15cm]{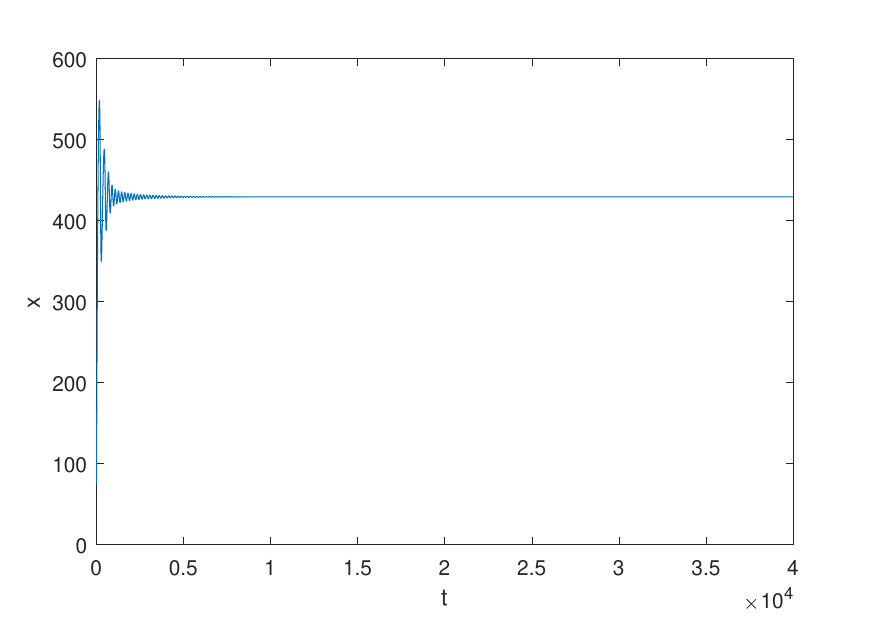}
		\caption{Temporal series of x(t)}
		\label{fig:image1}
	\end{subfigure}%
	\hfill
	\begin{subfigure}{0.3\textwidth}
		\centering
		\includegraphics[width=4.2cm,height=3.15cm]{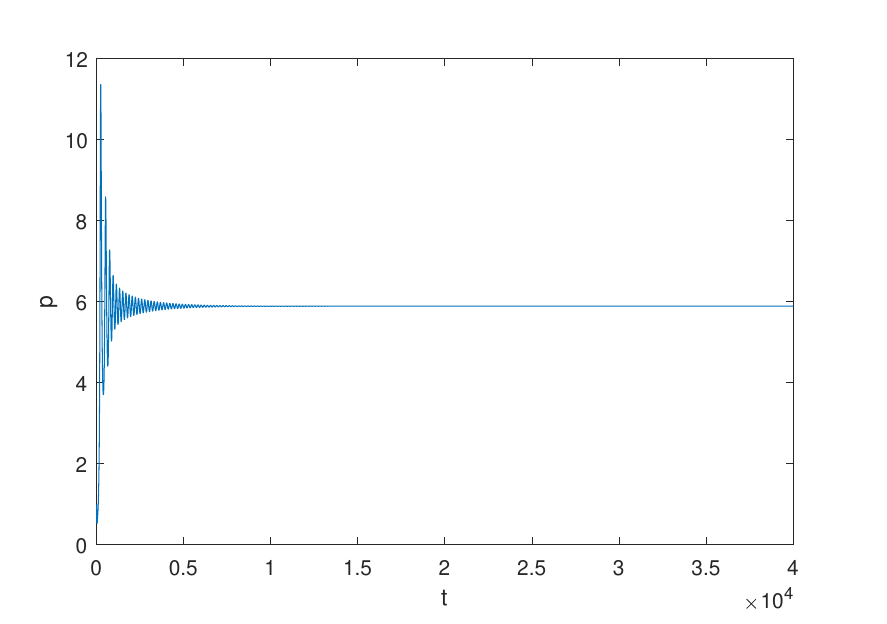}
		\caption{Temporal series of p(t)}
		\label{fig:image2}
	\end{subfigure}%
	\hfill
	\begin{subfigure}{0.3\textwidth}
		\centering
		\includegraphics[width=4.2cm,height=3.15cm]{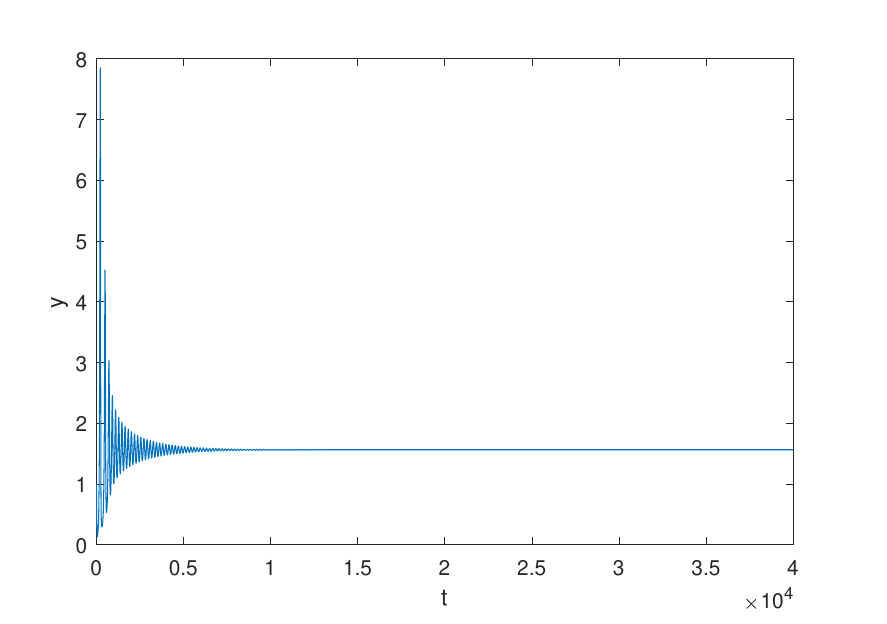}
		\caption{Temporal series of y(t)}
		\label{fig:image3}
	\end{subfigure}
	
	\begin{subfigure}{0.3\textwidth}
		\centering
		\includegraphics[width=4.2cm,height=3.15cm]{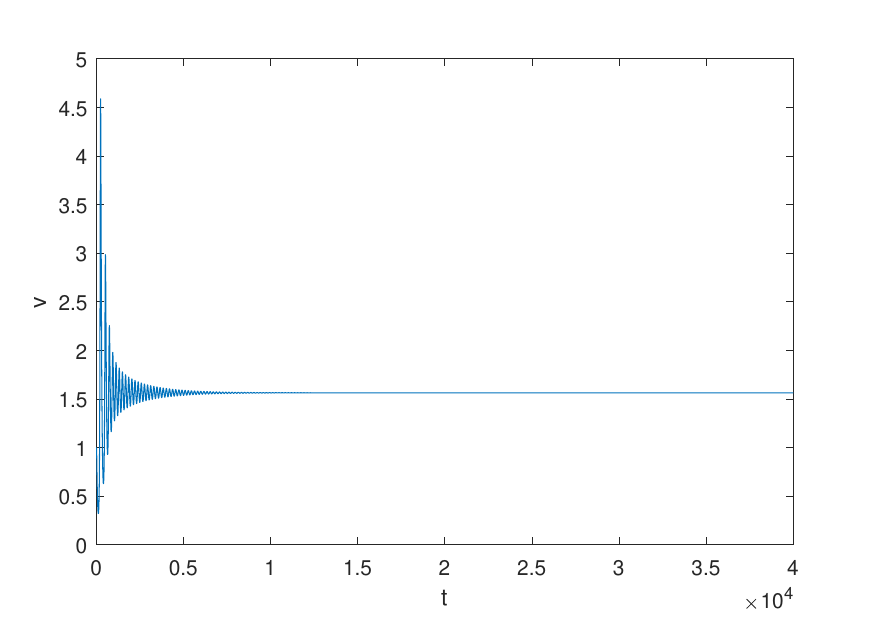}
		\caption{Temporal series of v(t)}
		\label{fig:image4}
	\end{subfigure}%
	\hfill
	\begin{subfigure}{0.3\textwidth}
		\centering
		\includegraphics[width=4.2cm,height=3.15cm]{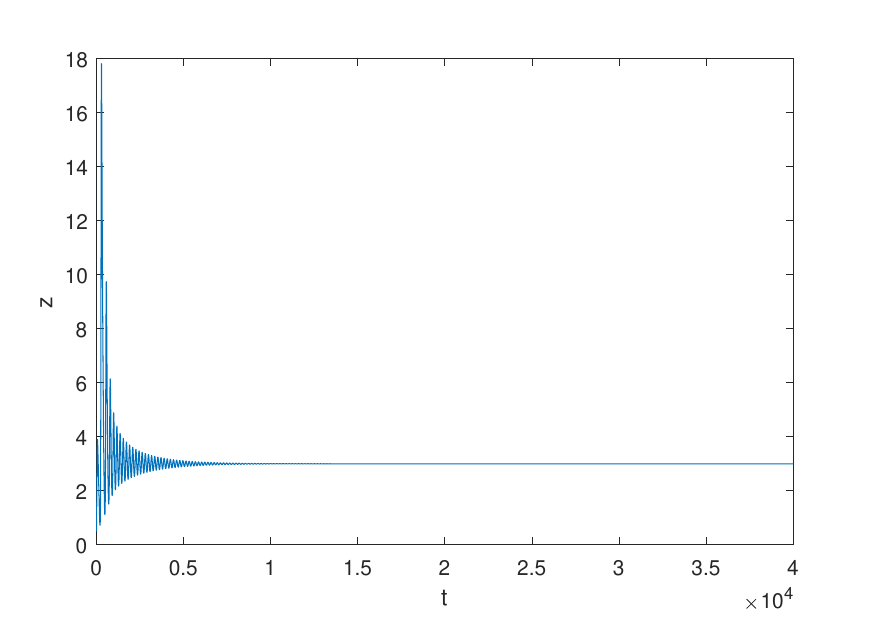}
		\caption{Temporal series of z(t)}
		\label{fig:image5}
	\end{subfigure}%
	\hfill
	\begin{subfigure}{0.3\textwidth}
		\centering
		\includegraphics[width=4.2cm,height=3.15cm]{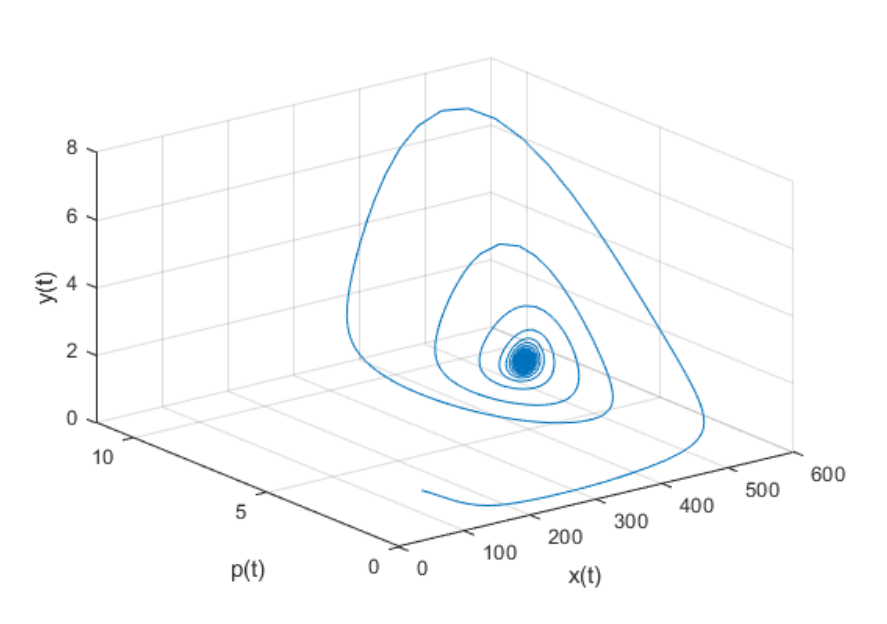}
		\caption{3D phase for x,p,y}
		\label{fig:image6}
	\end{subfigure}
	
	\caption{Simulation result of $E_2$ while $\tau_1=\tau_2=0.25$ and $\tau_3=42$}
	\label{fig}
\end{figure}
\begin{figure}[htbp] 
	\centering
	\begin{subfigure}{0.3\textwidth}
		\centering
		\includegraphics[width=4.2cm,height=3.15cm]{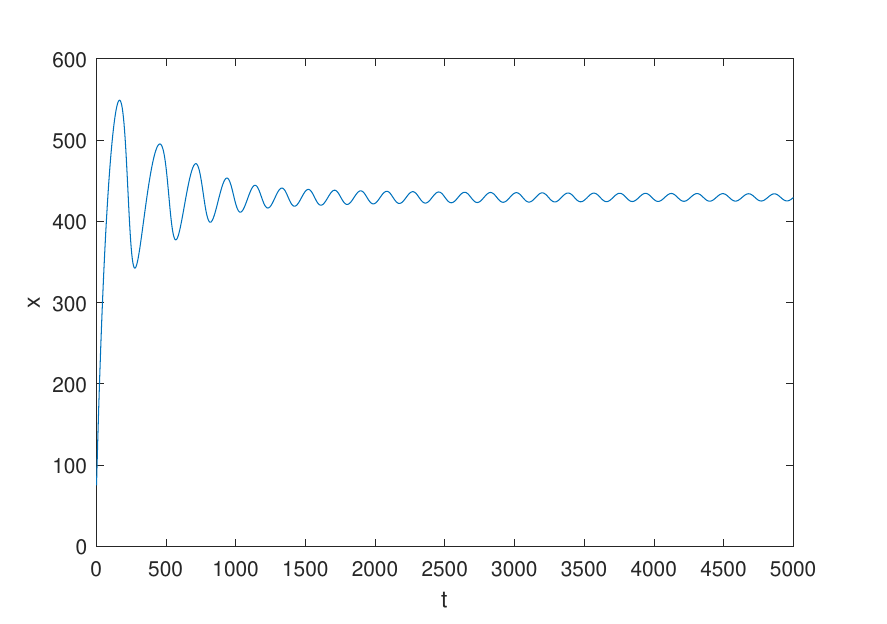}
		\caption{Temporal series of x(t)}
		\label{fig:image1}
	\end{subfigure}%
	\hfill
	\begin{subfigure}{0.3\textwidth}
		\centering
		\includegraphics[width=4.2cm,height=3.15cm]{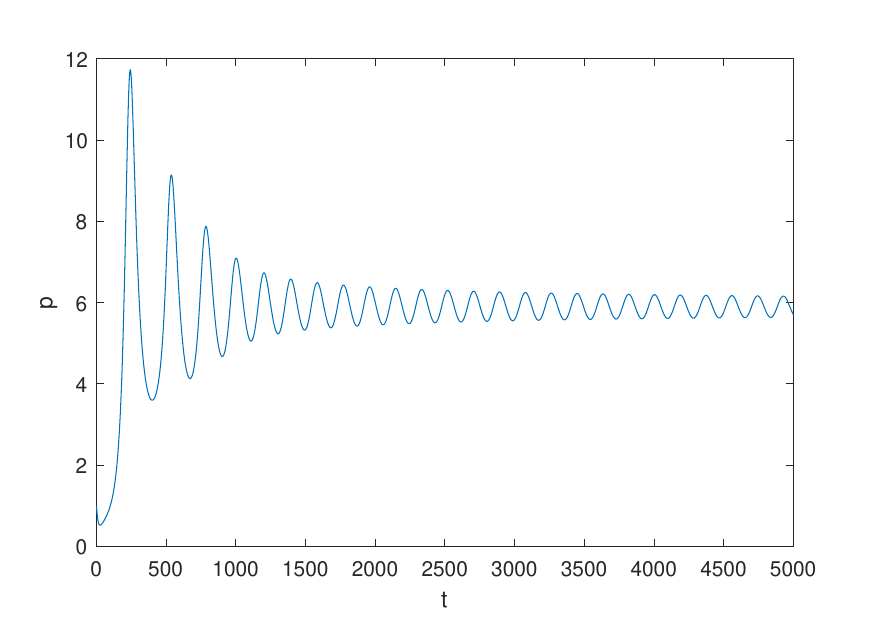}
		\caption{Temporal series of p(t)}
		\label{fig:image2}
	\end{subfigure}%
	\hfill
	\begin{subfigure}{0.3\textwidth}
		\centering
		\includegraphics[width=4.2cm,height=3.15cm]{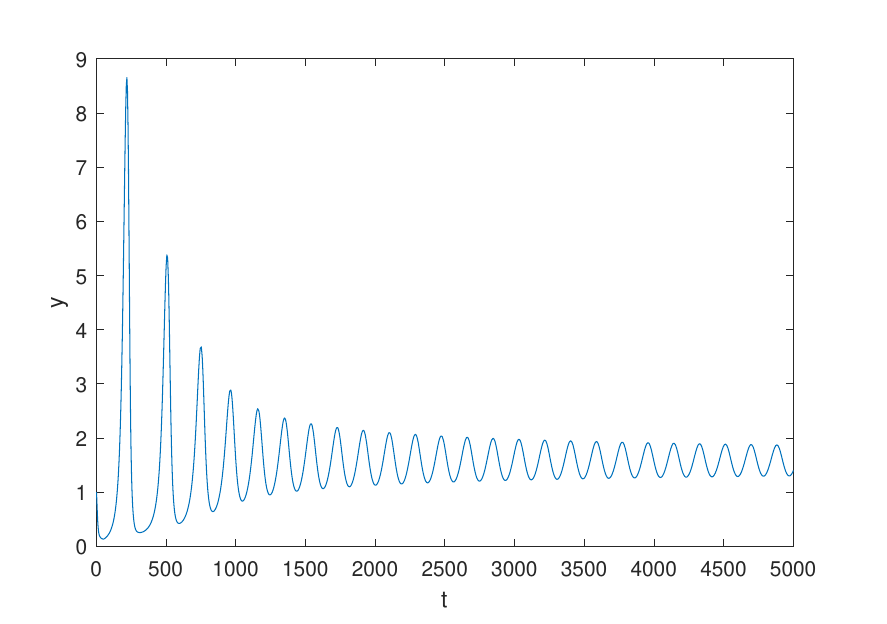}
		\caption{Temporal series of y(t)}
		\label{fig:image3}
	\end{subfigure}
	
	\begin{subfigure}{0.3\textwidth}
		\centering
		\includegraphics[width=4.2cm,height=3.15cm]{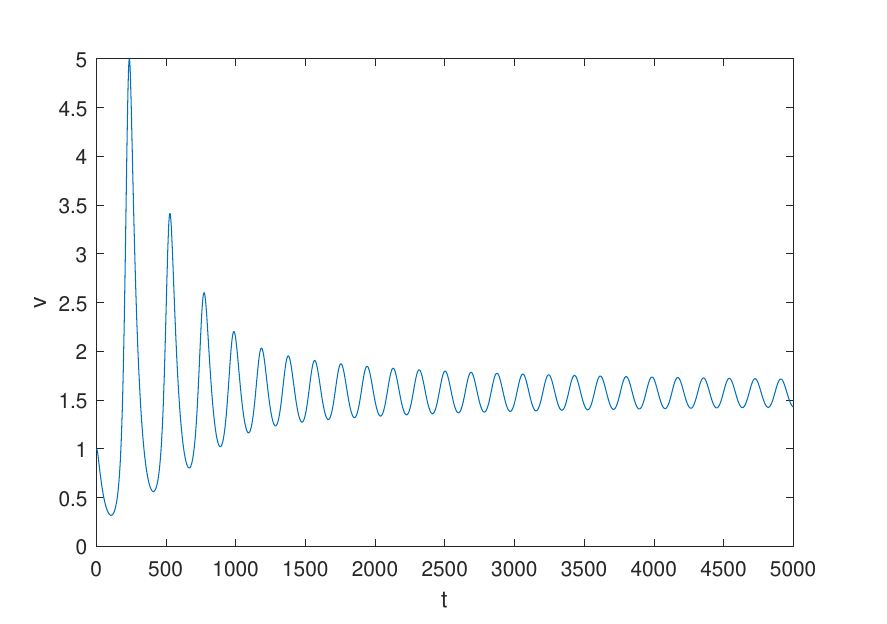}
		\caption{Temporal series of v(t)}
		\label{fig:image4}
	\end{subfigure}%
	\hfill
	\begin{subfigure}{0.3\textwidth}
		\centering
		\includegraphics[width=4.2cm,height=3.15cm]{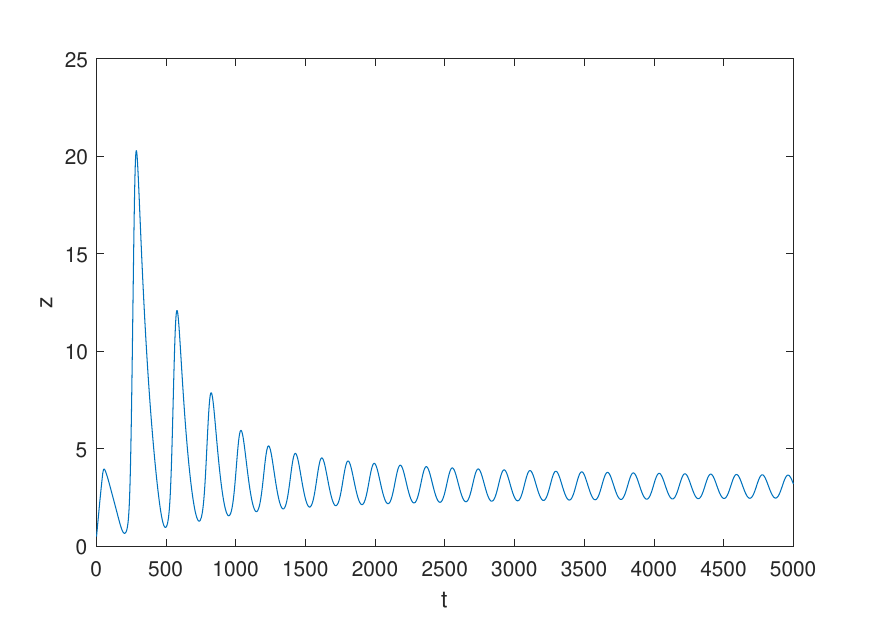}
		\caption{Temporal series of z(t)}
		\label{fig:image5}
	\end{subfigure}%
	\hfill
	\begin{subfigure}{0.3\textwidth}
		\centering
		\includegraphics[width=4.2cm,height=3.15cm]{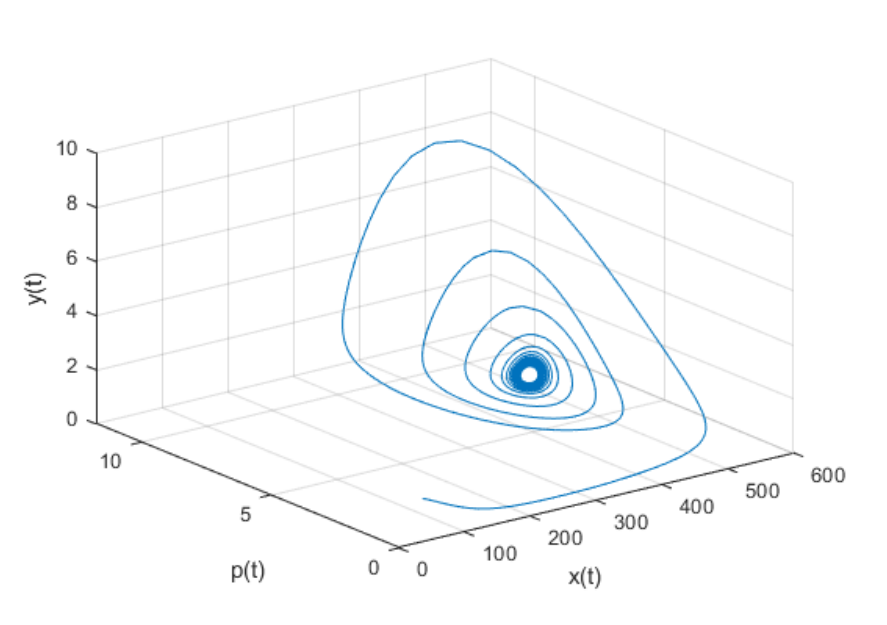}
		\caption{3D phase for x,p,y}
		\label{fig:image6}
	\end{subfigure}
	
	\caption{Simulation result of $E_2$ while $\tau_1=\tau_2=0.25$ and $\tau_3=43$}
	\label{fig}
\end{figure}
\begin{figure}[htbp] 
	\centering
	\begin{subfigure}{0.3\textwidth}
		\centering
		\includegraphics[width=4.2cm,height=3.15cm]{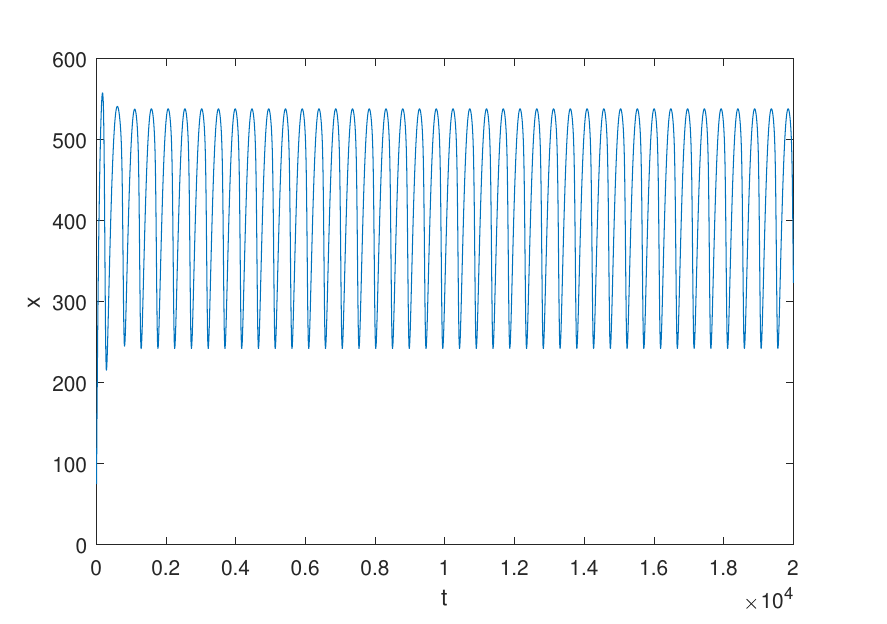}
		\caption{Temporal series of x(t)}
		\label{fig:image1}
	\end{subfigure}%
	\hfill
	\begin{subfigure}{0.3\textwidth}
		\centering
		\includegraphics[width=4.2cm,height=3.15cm]{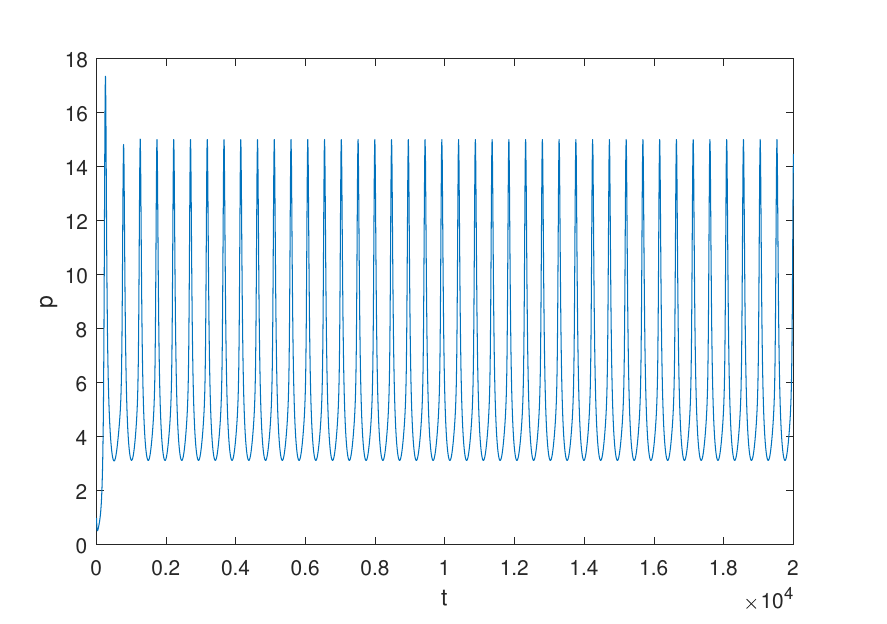}
		\caption{Temporal series of p(t)}
		\label{fig:image2}
	\end{subfigure}%
	\hfill
	\begin{subfigure}{0.3\textwidth}
		\centering
		\includegraphics[width=4.2cm,height=3.15cm]{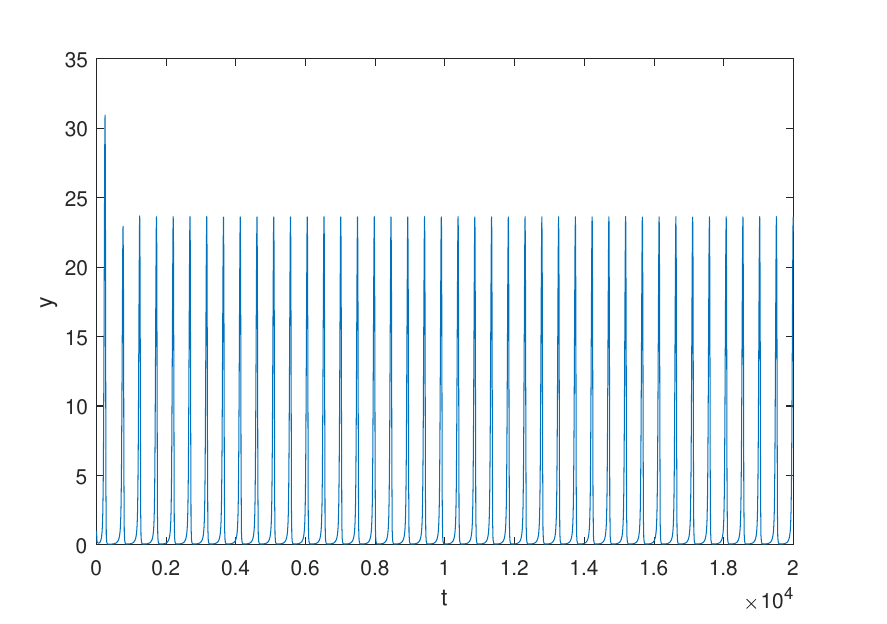}
		\caption{Temporal series of y(t)}
		\label{fig:image3}
	\end{subfigure}
	
	\begin{subfigure}{0.3\textwidth}
		\centering
		\includegraphics[width=4.2cm,height=3.15cm]{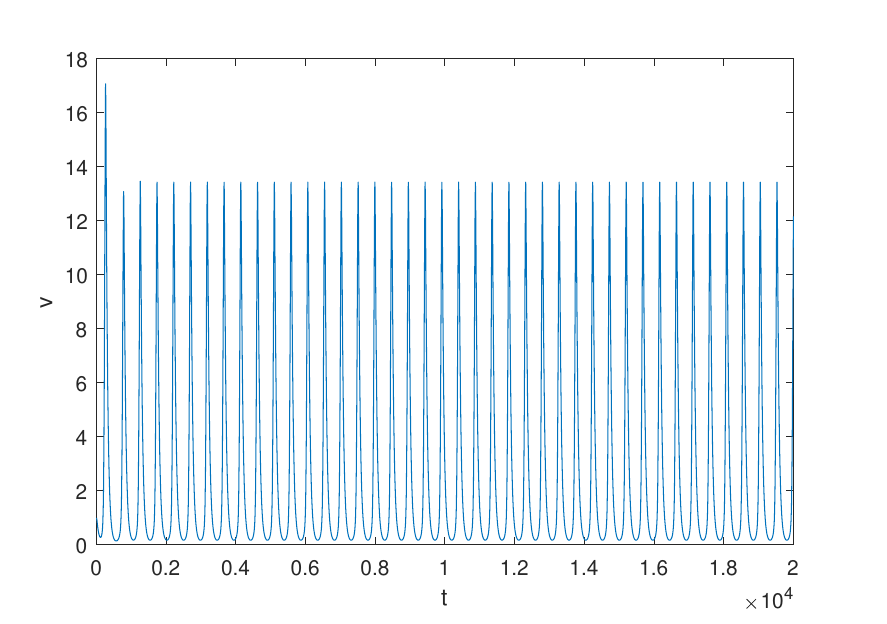}
		\caption{Temporal series of v(t)}
		\label{fig:image4}
	\end{subfigure}%
	\hfill
	\begin{subfigure}{0.3\textwidth}
		\centering
		\includegraphics[width=4.2cm,height=3.15cm]{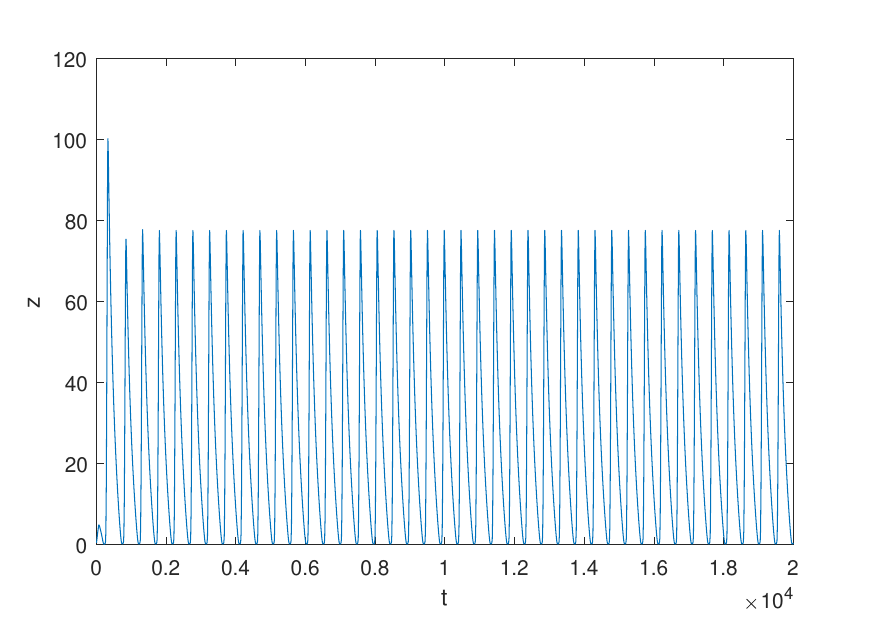}
		\caption{Temporal series of z(t)}
		\label{fig:image5}
	\end{subfigure}%
	\hfill
	\begin{subfigure}{0.3\textwidth}
		\centering
		\includegraphics[width=4.2cm,height=3.15cm]{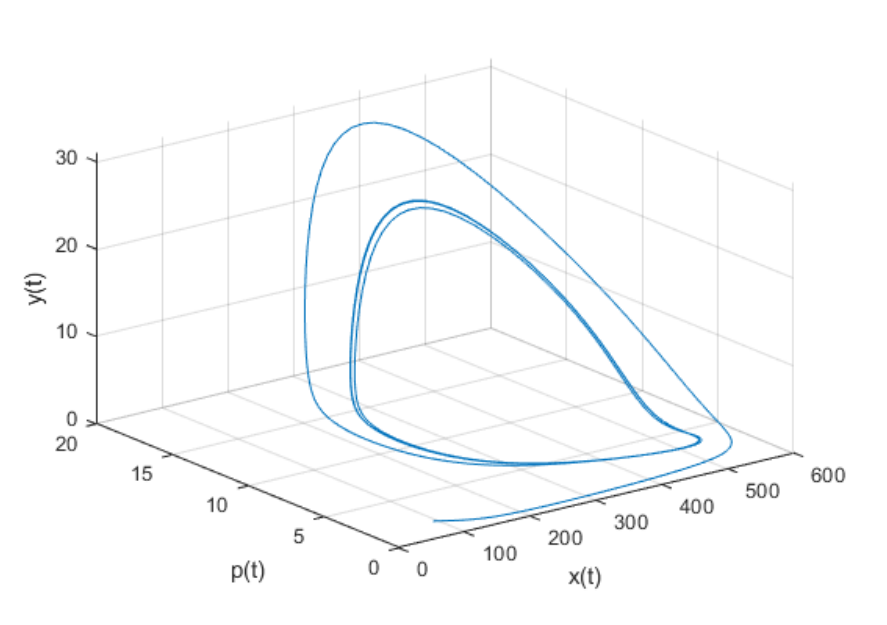}
		\caption{3D phase for x,p,y}
		\label{fig:image6}
	\end{subfigure}
	
	\caption{Simulation result of $E_2$ while $\tau_1=\tau_2=0.25$ and $\tau_3=60$}
	\label{fig}
\end{figure}
\newpage
Additionally, given $\mathcal{R}_0 = 5.8654 > 1$ and $\mathcal{R}_1 = 178.7590 > 1$, the system (\ref{1.2}) exhibits an immunity-activated equilibrium at $E_2 = (742.7828, 0.1328, 0.0486, 0.0486, 0.7423)$. We select $\tau_3 = 20$, $\tau_3 = 42$, $\tau_3 = 43$, and $\tau_3 = 60$, and the numerical simulation outcomes corresponding to the analysis are depicted in Figures 10, 11, 12, and 13. These results indicate that, when $\tau_1$, $\tau_2$, and $\tau_3$ are all nonzero, the Hopf bifurcation value $\tau_0\in (42, 43).$

\section{Conclusion}
In this paper, we extended an HIV model incorporating a latent period by introducing dual transmission routes, a saturated incidence rate, and three time-delay parameters (namely, the time $\tau_1$ required for the virus to infect uninfected cells, the time $\tau_2$ needed for viral replication, and the time $\tau_3$ for the occurrence of CTL immune response). We confirmed that the solutions of the model are nonnegative and bounded. Additionally, we analyzed three equilibrium states: the infection-free equilibrium $E_0$ determined by the basic reproduction number $\mathcal{R}_0$, the CTL-free infection equilibrium $E_1$ jointly determined by $\mathcal{R}_0$ and $\mathcal{R}_1$, and the endemic infection equilibrium $E_2$ with CTL response (when $\tau_3 = 0$) determined by $\mathcal{R}_1$.

Our research revealed that when $\mathcal{R}_0 < 1$, the equilibrium $E_0$ is globally asymptotically stable for all positive values of $\tau_1$, $\tau_2$, and $\tau_3$. When $\mathcal{R}_0 > 1$ and $\mathcal{R}_1 < 1$, the equilibrium $E_1$ is globally asymptotically stable for all positive $\tau_1$, $\tau_2$, and $\tau_3$. When $\mathcal{R}_1 > 1$ and $\tau_3 = 0$, the equilibrium $E_2$ is globally asymptotically stable for all positive $\tau_1$ and $\tau_2$. This indicates that $\tau_1$ and $\tau_2$ do not affect the stability of $E_0$ and $E_1$, while $\tau_3$ is crucial for the stability of $E_2$. There exists a critical threshold $\tau_0$. When $\tau_3 < \tau_0$, $E_2$ is stable; when $\tau_3 > \tau_0$, the system becomes unstable, leading to periodic oscillations and Hopf bifurcation.

From the perspectives of viral dynamics and immune response, these findings are of great significance. On the one hand, we found that $\mathcal{R}_0$ decreases monotonically with the increase of $\tau_1$ and $\tau_2$. This suggests that prolonging the latent period or the replication time helps to control the infection, highlighting the importance of considering time delays in viral dynamics modeling. Conversely, if $\tau_1$ and $\tau_2$ are ignored, that is, assuming that the virus enters the latent period and completes replication instantaneously, $\mathcal{R}_0$ will be overestimated. This, in turn, may lead to a misjudgment of the virus's transmission ability, affecting the formulation of prevention and control strategies and potentially resulting in resource waste or ineffective prevention and control.

On the other hand, the Hopf bifurcation phenomenon induced by $\tau_3$ reflects the dynamic imbalance between viral replication and immune clearance. When the delay in CTL response exceeds the critical threshold $\tau_0$, the immune system is unable to initiate effective clearance before the virus completes a critical replication cycle, leading to system instability. This finding not only clarifies the kinetic mechanism of viral persistence but also identifies the key time window for immune control, providing an important theoretical basis for optimizing treatment strategies.

This study elucidates the dynamic equilibrium between latency regulation, viral replication delay, and CTL immune response delay, providing a unified kinetic framework to explain clinically observed phenomena including viral latency maintenance, intermittent viral blips and immune control failure.  Notably, it clarifies the critical threshold effect of immune response delay in determining infection outcomes. However, our model adopts a deterministic framework, which has certain limitations in describing the stochastic activation process of HIV-latent infected cells. In fact, the activation of latent reservoir cells exhibits significant stochastic characteristics, and this stochastic activation process is highly likely to have a significant impact on the dynamic characteristics of viral rebound. Especially in cases of low viral load, stochastic effects may become the dominant factor affecting the system dynamics. Therefore, we plan to consider incorporating stochastic differential equations with white noise or continuous-time Markov chain models in subsequent studies to more accurately characterize the stochastic nature of latent infection, further improve our understanding of HIV infection dynamics, and provide more comprehensive and accurate scientific support for HIV prevention and control.

\vskip 20 pt
\noindent{\bf Acknowledgement}
\vskip 10 pt
The first author is partially supported by the National Key Research and Development Program of China 2020YFA0713100 and by the National Natural Science Foundation of China (Grant No. 11721101).
\nocite{*}
\bibliography{k}

\end{document}